\newcommand{\lyxmathsym}[1]{\ifmmode\begingroup\def\b@ld{bold}
  \text{\ifx\math@version\b@ld\bfseries\fi#1}\endgroup\else#1\fi}
\providecommand{\tabularnewline}{\\}
\theoremstyle{plain}
\newtheorem{thm}{\protect\theoremname}
\theoremstyle{plain}
\newtheorem{prop}[thm]{\protect\propositionname}
\theoremstyle{definition}
\newtheorem{example}[thm]{\protect\examplename}
\theoremstyle{definition}
\newtheorem{defn}[thm]{\protect\definitionname}
\theoremstyle{plain}
\newtheorem{cor}[thm]{\protect\corollaryname}
\theoremstyle{plain}
\newtheorem{lem}[thm]{\protect\lemmaname}
\theoremstyle{plain}
\newtheorem*{lem*}{\protect\lemmaname}
\theoremstyle{plain}
\newtheorem*{prop*}{\protect\propositionname}
\newcommand\numberthis{\addtocounter{equation}{1}\tag{\theequation}}
\newcommand{\norm}[1]{\left\Vert#1\right\Vert}
\newcommand{\iprod}[2]{\langle#1,#2\rangle}
\DeclareMathOperator*{\argmax}{arg\,max}
\DeclareMathOperator*{\argmin}{arg\,min}
\newcommand{\reals}{\mathbb{R}}
\DeclareMathOperator*{\optmin}{\mathrm{minimize}}
\DeclareMathOperator*{\optmax}{\mathrm{maximize}}
\DeclareMathOperator{\optst}{\mathrm{subject\ to}}
\newcommand{\deq}{\triangleq}
\newcommand{\E}{\mathbb{E}}
\newcommand{\BR}{\mathsf{BR}}
\newcommand{\BRdet}{\mathsf{BR}_\mathrm{det}}
\newcommand{\mr}{\tilde{r}_2}
\newcommand{\cA}{\mathcal{A}}
\newcommand{\cM}{\mathcal{M}}
\newcommand{\cP}{\mathcal{P}}
\newcommand{\cS}{\mathcal{S}}
\newcommand{\cT}{\mathcal{T}}
\newcommand{\cX}{\mathcal{X}}
\newcommand{\Mdet}{\mathcal{M}_\mathrm{det}}
\newcommand{\mdet}{m_\mathrm{det}}
\newcommand{\Pidet}{\Pi_\mathrm{det}}
\newcommand{\pidet}{\pi_\mathrm{det}}
\newcommand{\piopt}{\pi^\mathrm{opt}}
\newcommand{\Qopt}{Q^\mathrm{opt}}
\newcommand{\tildep}{\tilde{\mathcal{P}}}
\newcommand{\ent}{\mathrm{Ent}}
\newcommand{\xopt}{x^{\star}}
\newcommand{\mopt}{m^{\star}}
\newcommand{\pistar}{\pi^{\star}}
\newcommand{\optival}{\mathsf{OptiVal}}
\newcommand{\pessval}{\mathsf{PessVal}}
\newcommand{\Sd}{\mathcal{S}_d}
\definecolor{darkgreen}{rgb}{0,0.5,0}
\providecommand{\corollaryname}{Corollary}
\providecommand{\definitionname}{Definition}
\providecommand{\examplename}{Example}
\providecommand{\lemmaname}{Lemma}
\providecommand{\propositionname}{Proposition}
\providecommand{\theoremname}{Theorem}
\begin{document}
\title{Robust Reward Design for Markov Decision Processes}
\author{
Shuo Wu\thanks{%
Department of Electrical and Computer Engineering, University of Illinois Chicago, Chicago, IL 60607. Email: \texttt{swu99@uic.edu}; \texttt{hanshuo@uic.edu}.}
\and
Haoxiang Ma\thanks{%
Department of Electrical and Computer Engineering, University of Florida, Gainesville, FL 32611. Email: \texttt{hma2@ufl.edu}; \texttt{fujie@ufl.edu}.}
\and
Jie Fu\footnotemark[2]
\and
Shuo Han\footnotemark[1]
}

\maketitle
\begin{abstract}
The problem of reward design examines the interaction between a \emph{leader} and a \emph{follower}, where the leader aims to shape the follower's behavior to maximize the leader's payoff by modifying the follower's reward function. Current approaches to reward design rely on an accurate model of how the follower responds to reward modifications, which can be sensitive to modeling inaccuracies. To address this issue of sensitivity, we present a solution that offers robustness against uncertainties in modeling the follower, including 1) how the follower breaks ties in the presence of nonunique best responses, 2) inexact knowledge of how the follower perceives reward modifications, and 3) bounded rationality of the follower. Our robust solution is guaranteed to exist under mild conditions and can be obtained numerically by solving a mixed-integer linear program. Numerical experiments on multiple test cases demonstrate that our solution improves robustness compared to the standard approach without incurring significant additional computing costs. 
\end{abstract}

\section{Introduction and Background}

The problem of reward design is concerned with interactions between two types of players, a \emph{leader} and a \emph{follower}. The leader aims to induce the follower to behave in a desirable way by modifying the follower's reward function. The terms ``leader'' and ``follower'' derive from Stackelberg games~\citep{stackelbergMarktformUndGleichgewicht1934} to describe the asymmetric roles of the players: The leader always modifies the reward before the followers take action. As an example of reward design, imagine a class consisting of a teacher playing the role of the leader and a group of students playing the role of the follower. The teacher creates a grading policy that aims to motivate the students to achieve better learning outcomes. If the policy determines the final grade of a student solely based on the performance of the final exam, the student might neglect homework and class attendance, opting to cram for the final exam the night before. In contrast, a more effective grading policy is to include homework and attendance in the evaluation, which encourages students to learn on a regular basis. Another example arises in the training of autonomous agents via reinforcement learning, in which the agents are trained to maximize a predefined reward function. However, if the reward function is not carefully designed, it may be exploited by the agents to achieve high rewards in unintended ways, a phenomenon known as reward hacking~\citep{panEffectsRewardMisspecification2022}. This can lead to behaviors that do not align with human values. 

We focus on the case where the sequential decision-making problem of the follower is modeled by a Markov decision process (MDP). Specifically, the follower faces a sequential decision-making problem and aims to maximize the cumulative reward. Such a setting is motivated by problems in cybersecurity. In a network, an attacker, who acts as the follower in our problem setting, seeks to compromise a set of target hosts to acquire confidential data or to disrupt the network operation. To achieve the objective, the attacker must breach a sequence of hosts within the network by exploiting causal and logical dependencies between   vulnerabilities   and carrying out sequential attacks. The planning of the sequential attacks is captured using 
 deterministic or probabilistic attack graphs~\citep{jhaTwoFormalAnalyses2002a,hewett2008host,frigaultMeasuringNetworkSecurity2008a,nguyenMultistageAttackGraph2018a}, which can be viewed as a special instance of MDPs. A defender, who acts as the leader in this setting, aims to protect the network by allocating fake hosts or honey-patching vulnerabilities~\citep{qinHybridCyberDefense2023} and, consequently, mislead the attacker by manipulating the attacker's perceived rewards/payoffs.

The reward design problem can be formulated as a Stackelberg game~\citep{chenAdaptiveModelDesign2022,maOptimalResourceAllocation2023,ben-poratPrincipalAgentRewardShaping2024,chakrabortyPARLUnifiedFramework2024,zhangValueBasedPolicyTeaching2008,thakoorExploitingBoundedRationality2020}, for which algorithmic solutions are available. Nevertheless, existing solutions to reward design suffer from three issues of sensitivity to uncertainties in the model of the follower. First, the designed reward may be sensitive to tie-breaking of the follower. When the best response of the follower is not unique under the designed reward, the leader's payoff may depend on the follower's choice, as illustrated in Example~\ref{exa:opti_pess_values} and Example~\ref{exa:never_unique} in this paper. Second, when the leader does not know the exact reward function of the follower, the leader's payoff may drop significantly upon small errors in modeling the follower's reward function. This is because the follower's response does not vary continuously as his reward changes. Lastly, when the follower is irrational and does not play a best response, the leader's payoff may be sensitive to the level of suboptimality of the follower's response (Table~\ref{tab:Table-of-expected-payoffs} and Table~\ref{tab:Table-of-expected-payoffs 10*10}).  

Solutions to these three issues of sensitivity remain under-explored. For achieving robustness to nonunique best responses, one notable folklore solution is to slightly perturb the optimal strategy of the leader~\citep{zamirLeadershipCommitmentMixed2004}. However, it remains unclear how such a perturbation can be determined algorithmically. The method of perturbation may also fail in the presence of a budget constraint on reward allocation, as illustrated in Example~\ref{exa:opti_pess_values}. The closest work to ours is the study of robust Stackelberg equilibria in \citet{ganRobustStackelbergEquilibria2023}, which addresses the issue of sensitivity to irrational followers. In their setting, the follower may play any $\delta$-optimal response, i.e., a response that yields a follower's payoff within $\delta$ from optimality, whereas the leaders aims to find a robust strategy that maximizes the worst-case payoff against all possible $\delta$-optimal responses. They discussed the existence of such a robust strategy and showed how the robust strategy can be computed in theory. In comparison, our work uses the quantal response model~\citep{luceIndividualChoiceBehavior1959}, which is a probabilistic model for characterizing the irrationality of the follower. Interestingly, our proposed robust solution is found to relate to the robust solution studied in \citet{ganRobustStackelbergEquilibria2023}. Proposition~\ref{prop:robustness to delta optimal response} shows that our robust solution is nearly robust optimal against $\delta$-optimal responses of the follower.

\paragraph*{Main results}

The main results of this paper are summarized below:
\begin{itemize}
\item We formulate the reward design problem as a Stackelberg game and identify the issue of sensitivity to illustrate the need for obtaining a robust solution. When the follower's best response is nonunique, we give examples showing that the leader's payoff can be sensitive to how the follower breaks ties (Example~\ref{exa:opti_pess_values}). While a folklore solution in the case of normal-form Stackelberg games is to induce a unique best response of the follower by perturbing the leader's strategy, we show that a unique best response may not be inducible in reward design for MDPs, even for very common cases (Example~\ref{exa:never_unique}). 
\item We propose a robust solution, named an \emph{optimal interior-point allocation}, for the reward design problem. An optimal interior-point allocation is provably robust in three aspects. First, the solution is robust to the follower's tie-breaking choices (Proposition~\ref{prop:robustness_nonunique}). Second, the solution offers robustness when the leader does not know the follower's reward function precisely (Proposition~\ref{prop:robustness_reward}). Third, the solution is robust when the follower's decision-making deviates from rationality, a phenomenon often referred to as \emph{bounded rationality}; the robustness guarantee holds under three different models of bounded rationality, including two based on quantal response models (Propositions~\ref{prop:robustness of ent-regularized-1} and \ref{prop:Robustness to ent-regularized}), and one based on $\delta$-optimal responses (Proposition~\ref{prop:robustness to delta optimal response}). 
\item We prove that the existence of an optimal interior-point allocation only depends on the reward allocation budget of the leader (Theorem~\ref{Thm:Sufficiency}). Specifically, an optimal interior-point allocation is guaranteed to exist when the leader can achieve the optimal payoff without exhausting the reward allocation budget in the optimistic case, i.e., when the follower breaks ties in favor of the leader. 
\item Moreover, we prove that the existence of an optimal interior-point allocation is not only sufficient but also necessary for achieving robustness to tie-breaking of the follower (Theorem~\ref{prop:best responses to existence}). This establishes the pivotal role of interior-point allocations in achieving robustness. It further motivates the development of algorithms for finding an optimal interior-point allocation.

\item We show that an optimal interior-point allocation can be computed from a mixed-integer linear program (MILP). Numerical experiments were conducted to validate the robustness of the solution and to evaluate the computational cost on problems of practical interest (e.g., defending against cyberattacks). Compared to the standard solution to reward design given by~\citet{maOptimalResourceAllocation2023}, our solution was found to improve robustness in all the test cases while maintaining a reasonable computational cost. 
\end{itemize}

\section{Related Work}

\paragraph{Reward design}

The problem of choosing a suitable reward function to achieve a desirable outcome is also known as \emph{reward shaping}. Early work on reward shaping in MDPs studies policy invariance, i.e., how the reward function may be modified without changing the set of optimal policies. \citet{ngPolicyInvarianceReward1999} introduced the notion of potential-based reward shaping and showed guaranteed policy invariance in MDPs. Potential-based reward shaping was later extended to multi-agent systems for preserving the set of Nash equilibria~\citep{devlinTheoreticalConsiderationsPotentialBased2011}. More recently, reward shaping was studied in a leader-follower setup~\citep{ben-poratPrincipalAgentRewardShaping2024} to incentivize the follower to act in the leader's interest. Aside from reward shaping, the problem of reward design has also been studied in the context of \emph{policy teaching}~\citep{zhangValueBasedPolicyTeaching2008,zhangPolicyTeachingReward2009,banihashemAdmissiblePolicyTeaching2022} and can be viewed as a special case of \emph{model design}~\citep{chenAdaptiveModelDesign2022} and \emph{environment design}~\citep{zhangGeneralApproachEnvironment2009,yuEnvironmentDesignBiased2022}, in which elements (e.g., the transition kernel) other than the reward function of the MDP can also be modified.

For solving the reward design problem, \citet{zhangValueBasedPolicyTeaching2008} provided a solution based on mixed-integer programming. \citet{ben-poratPrincipalAgentRewardShaping2024} established the NP-hardness of the reward design problem and gave a polynomial-time approximation algorithm. A special case is when the leader aims to induce a specific policy rather than a policy that maximizes the leader's payoff. For this case, \citet{zhangPolicyTeachingReward2009} showed that the desired reward function can be obtained by finding a feasible solution to a linear program. 

\paragraph{Principal-agent problem}

In the reward design problem, the leader and the follower are sometimes called the \emph{principal} and the \emph{agent}, respectively, which are terms originated from economics~\citep{rossEconomicTheoryAgency1973,grossmanAnalysisPrincipalAgentProblem1983}. In the principal-agent problem, an agent takes actions on behalf of the principal through a contract designed by the principal~\citep{boltonContractTheory2005,salanieEconomicsContractsPrimer2005,hartTheoryContracts1987,ganGeneralizedPrincipalAgencyContracts2024}. Two key aspects of the principal-agent problem are\textit{ adverse selection} and\textit{ moral hazard~}\citep{myersonOptimalCoordinationMechanisms1982a}. The former refers to a situation in which the agents have private information that the principal cannot readily access; the latter refers to the situation in which the agents take private actions that the principal cannot directly control or observe. The problem of reward design resembles the principal-agent problem, except that it typically does not involve adverse selection or moral hazard. 

The principal-agent problem has also been studied in control theory, where it is commonly known as \emph{incentive design}~\citep{hoControltheoreticViewIncentives1982}; see recent survey papers such as \citet{ratliff2019perspective} and \citet{basarInducementDesiredBehavior2024}. In incentive design, a decision-maker needs to determine a reward-based incentive strategy to encourage a desired behavior of another agent. The incentive strategy plays a similar role as the contract in the principal-agent problem. The form of optimal incentive strategies was studied in \citet{basarGeneralTheoryStackelberg1982,zhengExistenceDerivationOptimal1982} for deterministic decision problems and in \citet{basarAffineIncentiveSchemes1984} for stochastic decision problems. Similar to moral hazard in the principal-agent problem, some settings of incentive design assume that the agent may influence the state of the environment through actions and that the decision-maker only has access to partial information of the state. 

\paragraph{Stackelberg games}

The reward design problem can be viewed as a Stackelberg game or, mathematically, a bilevel optimization problem~\citep{chenAdaptiveModelDesign2022,maOptimalResourceAllocation2023,ben-poratPrincipalAgentRewardShaping2024,chakrabortyPARLUnifiedFramework2024}. Stackelberg games were first introduced by \citet{stackelbergMarktformUndGleichgewicht1934} to demonstrate the benefits of leadership in games involving two parties. \citet{zamirLeadershipCommitmentMixed2004} extended the strategy space in the model from pure strategies to mixed strategies. When the number of pure strategies is finite, \citet{conitzerComputingOptimalStrategy2006} showed that an optimal strategy of the leader can be computed in polynomial time by solving multiple linear programs. Our results are inspired by the alternative solution presented by Paruchuri et al. (2008), who showed that an optimal strategy for the leader can be computed from a single MILP. 

\paragraph{Robust solution to Stackelberg games}

In a Stackelberg game, the influence of nonunique best responses of the follower on the leader's payoff is captured by the notions of strong and weak Stackelberg equilibria~\citep{bretonSequentialStackelbergEquilibria1988}. In a strong Stackelberg equilibrium, the leader plays optimally assuming that the follower breaks ties in favor of the leader. In contrast, in a weak Stackelberg equilibrium, the leader assumes that the follower breaks ties adversarially. Consequently, the leader will play a strategy that is robust to tie-breaking. Nevertheless, a weak Stackelberg equilibrium may not exist in general \citep[Section~4.3.2]{dempeBilevelOptimizationAdvances2020}. 

Robustness to the unknown follower's reward has been studied by \citet{canseverMinimumSensitivityApproach1983,canseverOptimumNearoptimumIncentive1985}. The results are based on analyzing the local sensitivity of the leader's payoff, which is characterized by derivatives of the payoff function with respect to unknown parameters in the follower's reward function. In comparison, our work uses the notion of \emph{robustness margin} and can guarantee non-local robustness against bounded modeling errors of the follower's reward function.

Robustness to irrational followers has been studied by \citet{ganRobustStackelbergEquilibria2023}, where the goal is to find an optimal leader's strategy against a follower who plays any near-optimal response deterministically. Their model of irrationality differs from the quantal response model used in our work, which assumes that the follower chooses suboptimal responses probabilistically.

\paragraph{Models of irrationality}

Several models have been used in the literature to characterize irrational behaviors of a decision-making agent. Of particular interest is modeling irrational behaviors due to limited reasoning capability of the agent, commonly known as \emph{bounded rationality}~\citep{simonBehavioralModelRational1955}. Our work adopts the \emph{quantal response} model, which assumes that the agent may play suboptimal strategies with a probability that decreases exponentially as the corresponding payoff decreases. The quantal response model in economics intends to model how the selection probabilities of an individual depend on explanatory variables~\citep{mcfaddenQuantalChoiceAnalysis1976}. In game-theoretic settings, quantal response was used to model the behaviors of players in normal-form games~\citep{mckelveyQuantalResponseEquilibria1995} and in Stackelberg games~\citep{yangComputingOptimalStrategy2012}. The resulting equilibrium is known as the \emph{quantal response equilibrium}. Besides quantal response, another popular model of bounded rationality is the \textit{cognitive hierarchy} model~\citep{camererCognitiveHierarchyModel2004}. The model uses iterative decision rules to represent strategic players with different levels of sophistication and can be used to explain nonequilibrium behaviors of the players. Lastly, one may assume that the agent plays any suboptimal strategy that leads to a payoff close to optimality~\citep{ganRobustStackelbergEquilibria2023}.

\section{Preliminaries\label{sec:Problem mdp}}

\subsection{Reward design for MDPs\label{subsec:prelim-setup}}

The decision-making process of the follower is modeled as an MDP $M=(\mathcal{S},\mathcal{A},\mathcal{T},r,\gamma,\rho)$. The set $\mathcal{S}$ is the state space, $\mathcal{A}$ is the action space, and $\rho\in\Delta(\mathcal{S})$ is the initial distribution, where $\Delta(\mathcal{S})$ denotes the set of probability distributions over $\cS$. The mappings $\mathcal{T}:\mathcal{S}\times\mathcal{A}\times\mathcal{S}\rightarrow[0,1]$ and $r:\mathcal{S}\times\mathcal{A}\rightarrow\mathbb{R}$ represent the transition kernel and the reward function, respectively. We sometimes also abuse the notation and view $r$ as a vector of dimension $\vert\mathcal{S}\vert\vert\mathcal{A}\vert$. The constant $\gamma\in(0,1]$ is the discount factor. The policy of the follower is denoted by $\pi:\mathcal{S}\times\mathcal{A}\rightarrow[0,1]$, where $\pi(s,\cdot)\in\Delta(\mathcal{A})$ for all $s\in\mathcal{S}$. The set of all policies is denoted by $\Pi$.

The leader may modify the reward function of the follower by allocating rewards in the environment. Let $\mathcal{\mathcal{S}}_{d}=\{s_{1},\dots,s_{K}\}\subseteq\mathcal{S}$ be the set of candidate states at which reward can be allocated. The reward allocation is described by a function $x:\mathcal{S}_{d}\rightarrow\text{\ensuremath{\mathbb{R}}}$, where $x(s_{i})$ is the amount of reward allocated at $s_{i}$. For convenience, we sometimes abuse the notation and treat $x$ as a vector of dimension $|\Sd|$ such that $x_{i}=x(s_{i})$. A reward allocation $x$ is said to be \emph{admissible} if $x$ satisfies the following conditions:
\begin{itemize}
\item The total amount of allocated resource cannot exceed a given budget $C$: $\sum_{i=1}^{|\mathcal{S}_{d}|}x_{i}\leq C$.
\item The amount of resource allocated to each state is nonnegative: $x_{i}\geq0,$ $i=1,\dots,|\Sd|$.
\end{itemize}
The set of admissible reward allocations is denoted by $\mathcal{X}=\left\{ x\mid\sum_{i=1}^{|\mathcal{S}_{d}|}x_{i}\leq C,\ x\succeq0\right\} $. An admissible reward allocation  $x$ induces a new reward function $r_{2}^{x}$ of the follower, where $r_{2}^{x}(s,a)\deq r(s,a) + x(s)$. Throughout most of the paper, the leader's reward function $r_{1} \colon \mathcal{S} \times \cA \to [0,1]$ is given by $r_{1}(s,a)=1$ when $s\in\Sd$ and $r_{1}(s,a)=0$ otherwise. The form of $r_1$ is motivated by problems in cybersecurity, where a defender (i.e., the leader) is rewarded by misleading an attacker (i.e., the follower) into reaching a set $\Sd$ of desired states. The case of general $r_1$ will be discussed in Section~\ref{sec:Interior-Point-Reward-Functions}. Under a given policy $\pi$ of the follower, the leader's value function is defined by $V_{1}^{\pi}(s)=\mathbb{E}_{\pi}\left[\sum_{t=0}^{\infty}\gamma^{t}r_{1}(s_{t},a_{t})\mid s_{0}=s\right]$, and the follower's value function is defined by $V_{2}^{\pi}(s;x)=\mathbb{E}_{\pi}\left[\sum_{t=0}^{\infty}\gamma^{t}r_{2}^{x}(s_{t},a_{t})\mid s_{0}=s\right]$. The expected payoffs of the leader and the follower are given by $\mathbb{E}_{s\sim\rho}\left[V_{1}^{\pi}(s)\right]$ and $\mathbb{E}_{s\sim\rho}\left[V_{2}^{\pi}(s;x)\right]$, respectively. The goal of the leader is to maximize her payoff by influencing the policy of the follower through the reward allocation $x$. Mathematically, this can be cast as the following optimization problem:
\begin{alignat}{2}
 & \optmax_{x,\pi} & \quad & \mathbb{E}_{s\sim\rho}\left[V_{1}^{\pi}(s)\right]\nonumber \\
 & \optst &  & \pi\in\argmax_{\pi}\mathbb{E}_{s\sim\rho}\left[V_{2}^{\pi}(s;x)\right]\label{eq:original problem}\\
 &  &  & x\in\mathcal{X}.\nonumber 
\end{alignat}
We do not assume that $\argmax_{\pi}\mathbb{E}_{s\sim\rho}\left[V_{2}^{\pi}(s;x)\right]$ is a singleton set. When the optimal policy of the follower is not unique under a reward allocation  $x$, the problem formulation in~(\ref{eq:original problem}) assumes that the follower will choose from the set of optimal policies in favor of the leader. This assumption will be revisited in Section~\ref{sec:sensitivity}.

\subsection{Reformulation via the occupancy measure}

While the expected payoff in an MDP is a nonlinear function of the policy $\pi$, it can be rewritten as a linear function by a change of variables. As shown in Section~\ref{subsec:oipa}, writing the expected payoff as a linear function is useful in revealing important geometric structures of~(\ref{eq:original problem}). The change of variables involves replacing the policy $\pi$ with $m\colon\cS\times\cA\to\reals$ defined by 
\begin{equation}
m(s,a)\deq\mathbb{E}_{\pi,s_{0}\sim\rho}\left[\sum_{t=0}^{\infty}\gamma^{t}\mathbb{P}(s_{t}=s,a_{t}=a)\right],\label{eq: pi to measure}
\end{equation}
known as the \emph{occupancy measure} induced by $\pi$. It can be shown that $m$ is a valid occupancy measure if and only if $m(s,a)\geq0$ for all $(s,a)\in\mathcal{S}\times\mathcal{A}$ and 
\begin{equation}
\sum_{a\in\mathcal{A}}m(s,a)=\rho(s)+\gamma\sum_{s',a'}\mathcal{T}(s',a',s)m(s',a')\quad\forall s\in\mathcal{S}.\label{eq:flow_constraint}
\end{equation}
For convenience, we abuse the notation and treat $m$ as a vector of dimension $|\cS||\cA|$, $\rho$ as a vector of dimension $|\cS|$, and succinctly write~(\ref{eq:flow_constraint}) as $Am=\rho$ for some matrix $A$. Denote the set of valid occupancy measures by
\begin{equation*}
\mathcal{M}=\left\{ m\mid m\succeq0,\ Am = \rho \right\}.
\end{equation*}

For the direction from policies to occupancy measures, however, there could be many policies that induce the same occupancy measure. 
\begin{prop}
\label{prop:policies induce occupancy measure}Given any $m\in\cM$, a policy $\pi$ induces $m$ if and only if 
\begin{equation}
\pi(s,a)=\begin{cases}
m(s,a)/\sum_{a'\in\mathcal{A}}m(s,a') & \text{if }\sum_{a'\in\mathcal{A}}m(s,a')\neq0,\\
\pi_{0}(s,a) & \text{otherwise}
\end{cases}\label{eq: measure to pi}
\end{equation}
for some policy $\pi_{0}$. 
\end{prop}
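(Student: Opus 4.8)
The plan is to prove the equivalence in both directions, treating the nontrivial states (where the occupancy mass is positive) and the trivial states (where it vanishes) separately.

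\textbf{From policies to occupancy measures.} Suppose a policy $\pi$ induces $m$ via~(\ref{eq: pi to measure}). First I would establish the pointwise identity $m(s,a) = \pi(s,a)\cdot d_\rho^\pi(s)$, where $d_\rho^\pi(s) \deq \sum_{a'\in\cA} m(s,a')$ is the state-occupancy measure; this follows directly from~(\ref{eq: pi to measure}) by writing $\mathbb{P}(s_t=s,a_t=a) = \mathbb{P}(s_t=s)\pi(s,a)$ and pulling the factor $\pi(s,a)$ out of the sum over $t$, since $\pi$ is stationary. Once this identity is in hand, for any state $s$ with $d_\rho^\pi(s) \neq 0$ we solve for $\pi(s,a) = m(s,a)/\sum_{a'}m(s,a')$, which is exactly the first case of~(\ref{eq: measure to pi}). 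For a state $s$ with $d_\rho^\pi(s) = 0$, the identity gives $m(s,a)=0$ for all $a$, so $\pi(s,\cdot)$ is unconstrained by $m$; we simply take $\pi_0 \deq \pi$, and then $\pi$ has the form~(\ref{eq: measure to pi}) with this choice of $\pi_0$.

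\textbf{From occupancy measures to policies.} Conversely, fix $m\in\cM$ and let $\pi$ be defined by~(\ref{eq: measure to pi}) for some policy $\pi_0$. I would show that the occupancy measure $m'$ induced by $\pi$ equals $m$. The key is that $m'$ is the \emph{unique} solution to the flow equations~(\ref{eq:flow_constraint}) consistent with the policy $\pi$: specifically, the state-occupancy vector $d' (s) \deq \sum_a m'(s,a)$ satisfies the linear system $d'(s) = \rho(s) + \gamma\sum_{s'}\big(\sum_{a'}\cT(s',a',s)\pi(s',a')\big)d'(s')$, which is $d' = \rho + \gamma (P^\pi)^\top d'$ for the policy-induced transition matrix $P^\pi$; since $\gamma\in(0,1]$ and, when $\gamma=1$, the chain is absorbing/transient enough for $I-\gamma(P^\pi)^\top$ to be invertible on the relevant subspace, this system has a unique solution, and then $m'(s,a) = \pi(s,a)d'(s)$. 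It therefore suffices to check that the given $m$ itself satisfies these same equations with the same $\pi$: the constraint $Am=\rho$ is precisely~(\ref{eq:flow_constraint}) for $m$, and on states with $\sum_{a'}m(s,a')\neq 0$ we have $m(s,a) = \pi(s,a)\sum_{a'}m(s,a')$ by construction, while on states with $\sum_{a'}m(s,a')=0$ both $m(s,a)$ and the corresponding contribution to the right-hand side vanish regardless of $\pi(s,\cdot)$. Hence $m$ solves the same system as $m'$, so $m=m'$ by uniqueness, i.e.\ $\pi$ induces $m$.

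\textbf{Main obstacle.} The routine parts are the algebraic manipulations with~(\ref{eq: pi to measure}) and~(\ref{eq:flow_constraint}). The delicate point is the uniqueness argument for the flow equations when $\gamma=1$: unlike the strictly discounted case where $I-\gamma(P^\pi)^\top$ is trivially invertible, one needs a standing assumption that guarantees finite occupancy measures (e.g.\ that the MDP is absorbing or that every policy reaches an absorbing set with probability one), so that $\cM$ is well-defined and the linear system is nonsingular on its natural domain. I would either invoke such a standing assumption made in the paper's setup or restrict the uniqueness claim to $\gamma<1$ and note the standard extension. The second obstacle, minor, is being careful that the ``otherwise'' branch genuinely imposes no constraint — i.e.\ that a state with zero occupancy contributes nothing to the flow balance of other states — which follows because its outgoing flow $\sum_a m(s,a)\cT(s,a,\cdot)$ is zero.
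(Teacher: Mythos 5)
Your forward direction coincides with the paper's entire proof: the paper establishes exactly the pointwise identity $m(s,a)=\pi(s,a)\cdot\sum_{a'\in\cA}m(s,a')$ by factoring $\mathbb{P}(s_t=s,a_t=a)=\mathbb{P}(s_t=s)\,\pi(s,a)$ and exchanging the sums (justified by absolute convergence), then reads off the two cases of (\ref{eq: measure to pi}). Where you genuinely go beyond the paper is the converse (``if'') direction. The paper stops after observing that $\pi(s,\cdot)$ is unconstrained on zero-mass states; it never verifies that a policy of the form (\ref{eq: measure to pi}) actually induces $m$. Your uniqueness argument --- both $m$ and the occupancy measure $m'$ induced by $\pi$ have state marginals solving the policy-conditioned flow system $d=\rho+\gamma (P^\pi)^\top d$ (for $m$ this uses $Am=\rho$ together with $m(s,a)=\pi(s,a)\sum_{a'}m(s,a')$, which holds trivially on zero-mass states), and this system is nonsingular, so $d=d'$ and hence $m=m'$ --- supplies that missing half, and it is the right way to close the equivalence. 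The obstacle you flag is also real: for $\gamma=1$, which the paper permits and uses in its examples, invertibility of $I-\gamma(P^\pi)^\top$ requires a transience or absorption assumption that the paper never states, so your caveat about restricting to $\gamma<1$ or invoking a standing absorbing-MDP hypothesis is precisely the care the paper itself omits. In short: correct, and strictly more complete than the paper's own argument.
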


\begin{proof}
See Appendix~\ref{subsec:Proof of polices induce occupancy measure}.
\end{proof}

Define $\langle r_{2}^{x},m\rangle\deq\sum_{(s,a)\in\mathcal{S}\times\mathcal{A}}r_{2}^{x}(s,a)m(s,a)$. It can be shown (see Appendix~\ref{subsec:Proof-of expected reward }) that the expected payoff of the follower becomes a linear function of $m$: 
\begin{equation*}
\mathbb{E}_{s\sim\rho}\left[V_{2}^{\pi}(s;x)\right]=\langle r_{2}^{x},m\rangle.
\end{equation*}
Similarly, the expected payoff of the leader satisfies $\mathbb{E}_{s\sim\rho}\left[V_{1}^{\pi}(s)\right]=\langle r_{1},m\rangle$. For any reward allocation $x$, define the set of best responses of the follower to $x$ by 
\[
\BR(x)\deq\argmax_{m\in\mathcal{M}}\langle r_{2}^{x},m\rangle.
\]
Using the occupancy measure, the reward design problem in~(\ref{eq:original problem}) can be equivalently reformulated as 
\begin{equation}
\begin{alignedat}{2} & \optmax_{x\in\mathcal{X},m\in\mathcal{M}} & \quad & \langle r_{1},m\rangle\\
 & \optst &  & m\in\BR(x).
\end{alignedat}
\label{eq:allocation_problem_om}
\end{equation}
We will hereafter denote the optimal value of~(\ref{eq:allocation_problem_om}) by $v_{1}^{\star}$ and an optimal solution of~(\ref{eq:allocation_problem_om}) by $(x^{\star},m^{\star})$, where $x^{\star}$ is called an \emph{optimal allocation} and $m^{\star}$ an \emph{optimal occupancy measure}. 

\subsection{\label{subsec:Computing-an-optimal}Computing an optimal reward allocation}

The reward design problem in (\ref{eq:allocation_problem_om}) can be solved by an MILP \citep{maOptimalResourceAllocation2023}. In the following, we will briefly review the procedure for completeness. For a given reward allocation $x$, the condition $m\in\BR(x)$ is equivalent to that $m$ is an optimal solution of the following problem:
\begin{equation}
\begin{alignedat}{2} & \optmax_{m} & \quad & \iprod{r_{2}^{x}}{m}\\
 & \optst &  & Am=\rho,\qquad m\succeq0,
\end{alignedat}
\label{eq:prob_follower}
\end{equation}
where the constraints come from the condition $m\in\mathcal{M}$. Because problem~(\ref{eq:prob_follower}) is a linear program and is always feasible, it is known that $m$ is optimal if and only if there exists (a dual variable) $\nu$ such that $(m,\nu)$ satisfies the Karush--Kuhn--Tucker (KKT) conditions:
\begin{equation}
Am=\rho,\quad m\succeq0,\quad A^{T}\nu-r_{2}^{x}\succeq0,\quad m\perp A^{T}\nu-r_{2}^{x}.\label{eq:KKT}
\end{equation}
The reward design problem in~(\ref{eq:allocation_problem_om}) can then be rewritten as\begin{subequations}\label{MILP} 
\begin{alignat}{2}
 & \optmax_{x,m,\nu} & \quad & \langle r_{1},m\rangle\label{eq:MILP_a}\\
 & \optst &  & Am=\rho,\qquad m\succeq0,\qquad A^{T}\nu-r_{2}^{x}\succeq0\label{eq:MILP_b}\\
 &  &  & m\perp A^{T}\nu-r_{2}^{x}\label{eq:MILP_c}\\
 &  &  & x\in\mathcal{X}.\label{eq:MILP_d}
\end{alignat}
\end{subequations}The complementary slackness constraint in~(\ref{eq:MILP_c}) can be reformulated as affine constraints with integer variables~\citep{vielmaMixedIntegerLinear2015}. Because other constraints are affine, and the objective is linear, problem~(\ref{MILP}) can be reformulated as an MILP. 

\subsection{Notation\label{subsec:Notation}}

A policy $\pi$ is called \emph{deterministic} if for any $s\in\cS$, there exists $a\in\cA$ such that $\pi(s,a)=1$. A policy is called \emph{randomized} if it is not deterministic. The set of all deterministic policies is denoted by $\Pidet$. For any $\pi\in\Pi$, we sometimes denote by $m^{\pi}$ the occupancy measure \emph{induced} by $\pi$ according to (\ref{eq: pi to measure}). An occupancy measure is called \emph{deterministic} (resp.\ \emph{randomized}) if it is induced by a deterministic (resp.\ randomized) policy. The set of deterministic occupancy measures is denoted by $\Mdet$. For any $m \in \cM$, denote the set of policies of the form (\ref{eq: measure to pi}) by $\Pi(m)$, which is the set of policies that induce $m$ according to Proposition~\ref{prop:policies induce occupancy measure}.

For a given (possibly randomized) policy $\pi$, we abuse the notation of $\Pidet$ and define
\begin{equation*}
\Pidet(\pi) \deq\{\pi'\in\Pidet\mid\pi'(s,a)=0\text{ when }\pi(s,a)=0\}.
\end{equation*}
In plain words, at any $s\in\cS$, a policy $\pi'\in\Pidet(\pi)$ is only allowed take an action that $\pi$ takes with nonzero probability. The set of occupancy measures induced by policies in $\Pidet(\pi)$ is denoted by $\Mdet(\pi)\deq\{m^{\pi'}\mid\pi'\in\Pidet(\pi)\}$ with an abuse of notation. Such a definition has a nice property that for any $\pi_{1},\pi_{2}\in\Pi(m)$, it holds that $\Mdet(\pi_{1})=\Mdet(\pi_{2})$. Interested reader may refer to Appendix~\ref{subsec:proof of induce same set of occupancy measure}.

\section{Sensitivity Issues of Optimal Allocations\label{sec:sensitivity}}

In problem~(\ref{eq:allocation_problem_om}), the maximization over $m$ implicitly assumes that the follower chooses a best response that maximizes the expected payoff of the leader. Under this assumption, the expected payoff received by the leader under an allocation $x$ is given by the optimal value of the problem
\begin{equation}
\begin{alignedat}{2} & \optmax_{m\in\mathcal{M}} & \quad & \iprod{r_{1}}{m}\\
 & \optst &  & m\in\BR(x).
\end{alignedat}
\label{eq:optimistic problem}
\end{equation}
In practice, however, the follower may choose arbitrarily from the set of best responses. In the worst case for the leader, the follower may choose a best response that is most unfavorable to the leader. In this case, the leader's  expected payoff under the allocation $x$ is given by the optimal value of the problem 
\begin{equation}
\begin{alignedat}{2} & \optmin_{m\in\mathcal{M}} & \quad & \iprod{r_{1}}{m}\\
 & \optst &  & m\in\BR(x).
\end{alignedat}
\label{eq:pessimistic problem}
\end{equation}
We define the optimal value of (\ref{eq:optimistic problem}) as the \emph{optimistic value} of $x$, denoted by $\optival(x)$, and the optimal value of (\ref{eq:pessimistic problem}) as the \emph{pessimistic value} of $x$, denoted by $\pessval(x)$. We also refer to $\sup_{x\in\mathcal{X}}\optival(x)$ as the \textit{optimal optimistic value} and $\sup_{x\in\mathcal{X}}\pessval(x)$ as the \textit{optimal pessimistic value}. In general, when the best response of the follower under an allocation $x$ is not unique, the optimistic value $\optival(x)$ and the pessimistic value $\pessval(x)$ may differ. In addition, the optimal optimistic value and the optimal pessimistic value may also differ, as illustrated in Example~\ref{exa:opti_pess_values}.
\begin{example}[Optimistic and pessimistic values]
\label{exa:opti_pess_values}Consider a $4\times4$ grid world in Figure~\ref{fig: one decoy}. The follower starts from the position $(1,2)$ and can move in any direction or stay in the same place by taking an action in $\cA=\{\textsf{left},\textsf{right},\textsf{up},\textsf{down},\textsf{stay}\}$. The true goals are at $(4,1)$ and $(4,3)$. The leader is allowed to allocate the reward at $(3,3)$. The game's discount factor is $1$. The payoff for arriving at $(4,1)$ is $3$, and for arriving at $(4,3)$ is $2$. The allocation budget is $1$, and $(4,1)$ and $(4,3)$ are the absorbing states, meaning that the follower cannot leave the state once enter them. The leader will get $1$ unit of payoff once the follower arrives at the states with allocated reward.

First, consider the optimistic case where the follower breaks any tie in favor of the leader. Denote by $x_{1}$ the allocation strategy that spends the entire budget of $1$ at $(3,3)$. Under $x_{1}$, choosing Path 1 to arrive at $(3,3)$ and $(4,3)$ in $4$ steps gives the follower a total payoff of $3$. Path 2 has the same payoff for the follower because the true goal at $(4,1)$ is in the path. Since the game ends in $4$ steps, $3$ is the maximum payoff that the follower can achieve, implying that two paths are the only best responses. The leader, however, will obtain different payoffs when the follower chooses different paths. If the follower reaches the allocated reward at $(3,3)$ with probability $1$ by following Path 1, the leader will receive a payoff of $1$. In comparison, if the follower reaches the true goal at $(4,1)$ by following Path 2, the leader will receive no payoff. Under the optimistic assumption, the follower will choose Path 1. Therefore, the leader's payoff for allocating all the budget at $(3,3)$ is $1$. For any other allocation strategy, the allocation at $(3,3)$ is less than $1$, and the follower's payoff for following Path 1 is less than the payoff for following Path 2. The follower will then follow Path 2, in which case the payoff for the leader is $0$. Therefore, $x_{1}$ is an optimal allocation strategy, and the optimal optimistic value of the game is $1$.

Next, consider the pessimistic case. It can be seen that the follower will always follow Path 2 under any admissible allocation. When the allocation at $(3,3)$ is less than $1$, the payoff for the leader remains the same as in the optimistic case because the best response of the follower is unique and is following Path 2. When the allocation at $(3,3)$ is $1$, the follower will still take Path 2 under the pessimistic assumption. This is because both paths are best responses of the follower, but Path 2 leads to a lower payoff for the leader. Therefore, the leader will always receive a payoff of $0$ in the pessimistic case regardless of the allocation strategy, which implies that the optimal pessimistic value of the game is $0$.

In summary, this example shows that the optimistic value and pessimistic value under $x_{1}$ are different: $\optival(x_{1})=1$ and $\pessval(x_{1})=0$. In addition, the optimal optimistic value of the game is $1$, whereas the optimal pessimistic value of the game is $0$.

\begin{figure}
\begin{centering}
\includegraphics[width=0.5\textwidth]{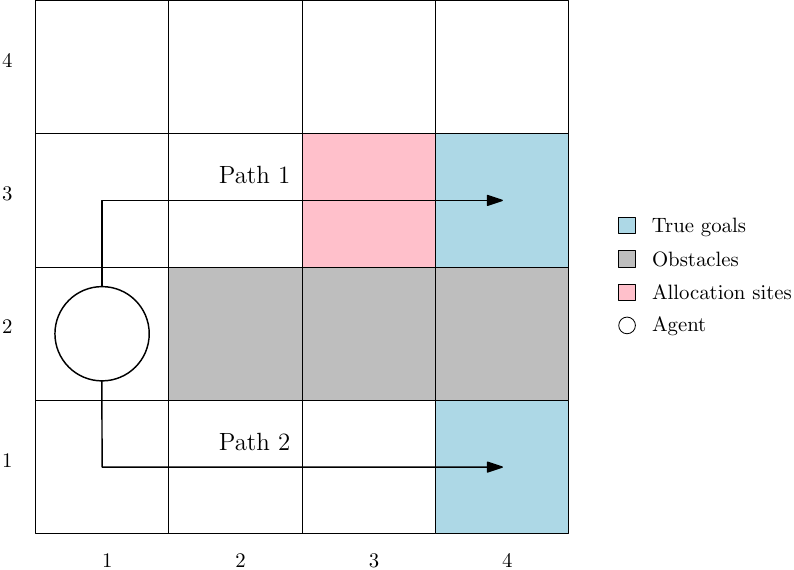}
\par\end{centering}
\caption{Reward design problem with one state admits allocated reward. The payoff for arriving at $(4,1)$ is $3$ and that for arriving at $(4,3)$ is $2$. At most $1$ unit of resource can be allocated toward the reward at $(3,3)$. The follower starts at $(1,2)$, and all the transitions are deterministic \label{fig: one decoy}.}
\end{figure}
\end{example}

Besides the handcrafted setup in Example~\ref{exa:opti_pess_values}, our numerical experiments in Section~\ref{subsec:Robustness-to-nonunique experiment} suggest that the optimal allocation obtained using the method in Section~\ref{subsec:Computing-an-optimal} also exhibits sensitivity to nonunique best responses. This motivates us to search for an optimal allocation $\xopt$ that is robust to nonunique best responses of the follower. Mathematically, this requires finding $\xopt$ that satisfies 
\begin{equation*}
\optival(\xopt)=\pessval(\xopt)=v_{1}^{\star}.
\end{equation*}

The issue of sensitivity to tie-breaking in Stackelberg games is a known issue in the literature, where a folklore solution (see page 12 in \citet{zamirLeadershipCommitmentMixed2004}) is to perturb the optimal allocation slightly such that the best response becomes unique. Nevertheless, two issues prevent the immediate application of this solution. First, it remains unclear how such a perturbation should be obtained algorithmically. Indeed, when the perturbation is not chosen appropriately, the induced best response may be to the disadvantage of the leader and yield the pessimistic value. Second, the best response may remain nonunique regardless of how the optimal allocation is perturbed, as illustrated in Example~\ref{exa:never_unique}.
\begin{example}[Best response is never unique]
\label{exa:never_unique}

Consider the environment in Figure~\ref{fig:two obstacles}, which has a similar setting as Example~\ref{exa:opti_pess_values} with the exception that one obstacle has been removed. Both Path 1 and Path 3 go through the allocated reward at $(3,3)$ and the true goal at $(4,3)$ but never reach the true goal at $(4,1)$. Therefore, for the follower, the payoff for choosing Path 1 and Path 3 are always the same under any reward allocation. This implies that Path 1 can never be a unique best response. For a similar reason, for the follower, the payoff for choosing Path 2 and Path 4 are always the same under any reward allocation. This implies that Path 2 can never be a unique best response. 

In the meantime, it is not difficult to see that either Path 1 or Path 2 must be a best response under any allocation. Therefore, the existence of Paths 3 and 4 shows that it is impossible to induce a unique best response regardless of the allocation strategy.

\begin{figure}
\begin{centering}
\includegraphics[viewport=0bp 0bp 362bp 273bp,width=0.5\textwidth]{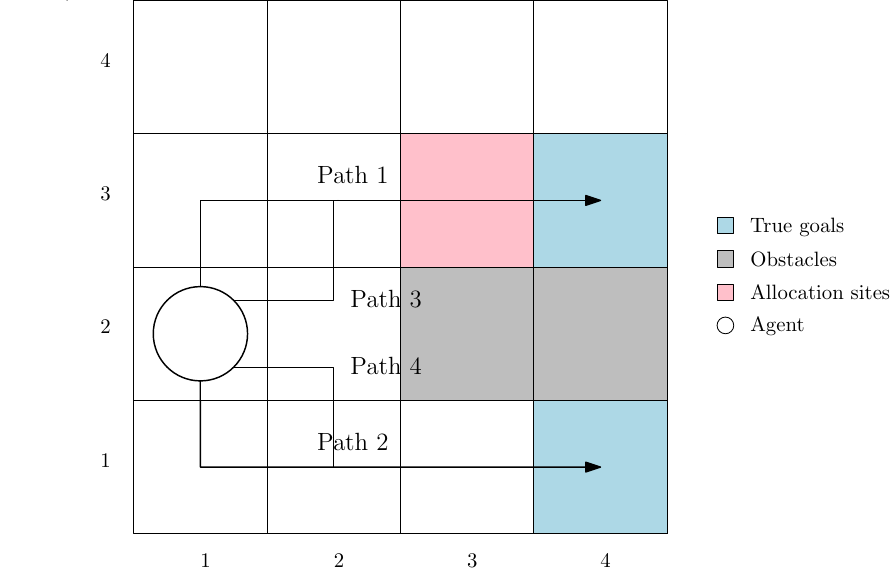}
\par\end{centering}
\caption{Reward design problem with two obstacles. Other settings are similar to Example~\ref{exa:opti_pess_values}. \label{fig:two obstacles}}

\end{figure}
\end{example}

\section{Interior-Point Allocation Are Robust\label{sec:ipa_robust}}

We introduce the concept of an \emph{optimal interior-point allocation}, which is a special type of optimal allocations, and show that any optimal interior-point allocation is robust to nonunique best responses of the follower. Namely, by using an optimal interior-point allocation, the leader is always guaranteed to receive the optimal expected payoff of the game regardless of how the follower breaks ties. Moreover, an optimal interior-point allocation is able to provide other forms of robustness guarantees. One is robustness to uncertainty in the follower's reward function: It ensures that the leader still receives the optimal payoff without exactly knowing how the follower perceives the modified reward function. Another form is robustness to bounded rationality in the follower: It ensures that the leader does not lose much payoff when the follower starts to behave irrationally by responding with a slightly suboptimal policy. 

\subsection{Optimal interior-point allocations\label{subsec:oipa}}

Since any allocation $x\in\reals^{|\Sd|}$ corresponds to one or more best responses described by $\BR(x)$, it is possible to divide $\reals^{|\Sd|}$ into (possibly overlapping) regions based on the corresponding best response. Each region is called an \emph{allocation region} and is identified by a unique occupancy measure.
\begin{defn}[Allocation region]
\label{def:allocation_region}Let $m\in\cM$ be an occupancy measure. The \emph{allocation region} of $m$ is defined by
\begin{equation*}
\mathcal{P}_{m}=\{x\in\mathbb{R}^{|\Sd|}\mid m\in\BR(x)\}=\{x\in\mathbb{R}^{|\Sd|}\mid\iprod{r_{2}^{x}}{m}\geq\iprod{r_{2}^{x}}{m'}\text{ for all }m'\in\mathcal{M}\}.
\end{equation*}
Furthermore, the allocation region of an optimal occupancy measure is called an \emph{optimal allocation region}. 
\end{defn}

The term allocation region highlights the fact that $\cP_{m}$ is a subset of $\reals^{|\Sd|}$, the space where allocation vectors live. For $x^{\star}$ to be an optimal allocation, it is necessary from Definition~\ref{def:allocation_region} that $\xopt$ belongs to some optimal allocation region. However, when $\xopt$ is on the boundary of an optimal allocation region, the best responses under $\xopt$ are not unique and may lead to undesirable sensitivity to tie-breaking. This motivates us to examine the interior points of an allocation region, which we refer to as \emph{interior-point allocations}.
\begin{defn}[Interior-point allocation]
\label{def:interior-point}Let $m\in\cM$ be an occupancy measure. An allocation vector $x\in\cX$ is called an\emph{ interior-point allocation of $\cP_{m}$} if there exists $c>0$ such that 
\begin{equation}
x+cv\in\cP_{m}\quad\text{for all }\norm{v}_{1}\leq1.\label{eq:interior_pt_condition}
\end{equation}
The largest constant $c$ for (\ref{eq:interior_pt_condition}) to hold is called the \emph{margin} of $x$. An interior-point allocation of an optimal allocation region is called an \emph{optimal interior-point allocation}.

Because all norms are equivalent in a finite-dimensional space, the choice of the norm in~(\ref{eq:interior_pt_condition}) does not affect the definition of interior-point allocation: An interior-point allocation in one norm must also be an interior-point allocation in another norm. The choice of $\norm{\cdot}_{1}$ is for the convenience of computation. For computing an optimal interior-point allocation, the condition in (\ref{eq:interior_pt_condition}) can be rewritten equivalently as finitely many constraints given by~(\ref{eq:1-norm transformation}) in Section~\ref{sec:Computation-of-Chbyshev}.
\end{defn}

\subsection{Robustness to nonunique best responses\label{subsec:robustness_nonunique}}

We will show that an optimal interior-point allocation $\xopt$ always yields the optimal value $v_{1}^{\star}$ of the game regardless of how the follower breaks ties. In other words, the pessimistic value of $\xopt$ is equal to the optimistic value of $\xopt$. 
\begin{prop}[Robustness to nonunique best responses]
\label{prop:robustness_nonunique}If $\xopt$ is an optimal interior-point allocation, then $\optival(\xopt)=\pessval(\xopt)=v_{1}^{\star}$.
\end{prop}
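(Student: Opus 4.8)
The key observation is that at an interior-point allocation $\xopt$, the best response is locally insensitive to small perturbations of the allocation, and this local stability can be converted into a statement about which occupancy measures lie in $\BR(\xopt)$. Let $\mopt$ be the optimal occupancy measure whose allocation region $\cP_{\mopt}$ contains $\xopt$ as an interior point, so $\iprod{r_1}{\mopt} = \optival(\xopt)$; since $\mopt$ is an optimal occupancy measure for problem~(\ref{eq:allocation_problem_om}), we have $\iprod{r_1}{\mopt} = v_1^\star$. It therefore suffices to show $\pessval(\xopt) = v_1^\star$, i.e., that every $m' \in \BR(\xopt)$ satisfies $\iprod{r_1}{m'} \geq v_1^\star$. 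Equivalently, I want to show that every $m' \in \BR(\xopt)$ is \emph{itself} an optimal occupancy measure, i.e., $\xopt$ lies in the optimal allocation region $\cP_{m'}$ as well.

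The plan is to exploit the margin $c > 0$ from Definition~\ref{def:interior-point}. Fix any $m' \in \BR(\xopt)$. Because $\iprod{r_2^x}{m}$ is \emph{affine} in $x$ (indeed $\iprod{r_2^x}{m} = \iprod{r}{m} + \sum_{i} x_i \sum_a m(s_i,a)$), the condition ``$m \in \BR(x)$'' is defined by linear inequalities in $x$, so $\cP_m$ is a closed convex polyhedron and $\xopt + cv \in \cP_{\mopt}$ for all $\norm{v}_1 \le 1$. The first main step is to argue that this forces $m'$ and $\mopt$ to generate the \emph{same} affine-linear functional on the relevant directions: for any direction $v$ with $\norm v_1 \le 1$, since $\xopt \pm cv \in \cP_{\mopt}$ and $\mopt, m' \in \BR(\xopt)$, I would use the best-response (optimality) inequalities at the three points $\xopt$, $\xopt + cv$, $\xopt - cv$ to deduce $\iprod{r_2^{\xopt}}{\mopt} = \iprod{r_2^{\xopt}}{m'}$ (both are the optimal value of the follower's LP at $\xopt$) and, more importantly, that the linear part of $x \mapsto \iprod{r_2^x}{m'} - \iprod{r_2^x}{\mopt}$ vanishes in every coordinate direction — hence this difference is identically zero as a function of $x$. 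Concretely: $\iprod{r_2^{\xopt + cv}}{\mopt} \ge \iprod{r_2^{\xopt+cv}}{m'}$ because $\mopt \in \BR(\xopt + cv)$, and the reverse inequality at $\xopt - cv$; combined with equality at $\xopt$ and affineness, this pins down $\sum_a \mopt(s_i,a) = \sum_a m'(s_i,a)$ for every candidate state $s_i$, so $\iprod{r_2^x}{m'} = \iprod{r_2^x}{\mopt}$ for all $x \in \reals^{|\Sd|}$.

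Once $\iprod{r_2^x}{m'} = \iprod{r_2^x}{\mopt}$ for all $x$, it follows that $\cP_{m'} = \cP_{\mopt}$: for any $x$, $m' \in \BR(x)$ iff $\iprod{r_2^x}{m'} \ge \iprod{r_2^x}{m''}$ for all $m'' \in \cM$, and since $\iprod{r_2^x}{m'} = \iprod{r_2^x}{\mopt}$ and $\mopt \in \BR(\xopt)$ already gives the right value, one checks $m' \in \BR(x) \iff \mopt \in \BR(x)$. In particular $m'$ is an optimal occupancy measure (it achieves the optimum of~(\ref{eq:allocation_problem_om}) together with allocation $\xopt$), so $\iprod{r_1}{m'} = v_1^\star$. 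As $m'$ was an arbitrary element of $\BR(\xopt)$, this gives $\pessval(\xopt) = \min_{m' \in \BR(\xopt)} \iprod{r_1}{m'} = v_1^\star$, and since always $\pessval \le \optival \le v_1^\star$ with $\optival(\xopt) \ge \iprod{r_1}{\mopt} = v_1^\star$, all three quantities coincide.

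The step I expect to be the main obstacle is the rigorous extraction of ``$\sum_a \mopt(s_i,a) = \sum_a m'(s_i,a)$ for all $i$'' from the interior-point condition — that is, cleanly turning the two-sided perturbation inequalities into an equality of the affine functionals rather than just a one-sided bound. The subtlety is that $\BR$ is defined by infinitely many inequalities (one per $m'' \in \cM$), so I must be careful that $m' \in \BR(\xopt)$ together with $\mopt \in \BR(\xopt \pm cv)$ genuinely sandwiches the follower's optimal value and that no slack is lost; choosing $v = \pm e_i/\! \text{(normalization)}$ along each coordinate and using that $\mopt$ remains a best response on a full $\ell_1$-ball around $\xopt$ is what makes the argument go through. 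Everything else is routine convexity/LP-duality bookkeeping.
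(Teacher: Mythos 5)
Your core argument is the same as the paper's: the paper packages the two-sided perturbation step as a lemma showing that if $\bar{x}$ is an interior point of $\cP_{m}$ with margin $c$, then $\iprod{r_2^{\bar{x}}}{m-m'}\geq c\cdot\max_{s\in\Sd}\bigl|\sum_{a}(m(s,a)-m'(s,a))\bigr|$ for all $m'\in\cM$, and then sets the left side to zero for $m'\in\BR(\xopt)$ to conclude $\sum_{a}m'(s,a)=\sum_{a}\mopt(s,a)$ on $\Sd$ — exactly the marginal equality you extract by testing $v=\pm e_i$. That derivation of yours is correct and is the heart of the proof. The one flawed step is your closing inference: from $\iprod{r_2^{x}}{m'}=\iprod{r_2^{x}}{\mopt}$ for all $x$ you conclude $\cP_{m'}=\cP_{\mopt}$ (fine) and then assert that $m'$ is therefore an optimal occupancy measure, hence $\iprod{r_1}{m'}=v_1^{\star}$. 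That inference is not valid on its own: two occupancy measures can share the same allocation region yet give the leader different payoffs under a general $r_1$ (this is precisely the counterexample in Section~\ref{sec:Interior-Point-Reward-Functions}, where $\cP_{m_1}=\cP_{m_2}$ but $\iprod{r_1}{m_1}=1\neq 0=\iprod{r_1}{m_2}$), and "achieves the optimum of~(\ref{eq:allocation_problem_om})" is exactly what you are trying to prove, so as written it is circular. The repair is immediate from what you already established: since $r_1$ is the indicator of $\Sd$, $\iprod{r_1}{m'}=\sum_{s\in\Sd}\sum_{a}m'(s,a)=\sum_{s\in\Sd}\sum_{a}\mopt(s,a)=\iprod{r_1}{\mopt}=v_1^{\star}$. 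This is where the special form of $r_1$ must be invoked explicitly — the proposition genuinely fails without it — so state that step rather than routing through the equality of allocation regions.
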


\begin{proof}
See Appendix~\ref{subsec: proof of Robustness 1}. 
\end{proof}
The importance of an interior-point allocation can be understood as follows. Suppose that $\xopt$ is an optimal interior-point allocation of $\cP_{m_{1}}$ for some optimal occupancy measure $m_{1}$. This implies $m_{1}\in\BR(\xopt)$. When $\optival(\xopt)\neq\pessval(\xopt)$, there must exist $m_{2}\in\BR(x^{\star})$ with $\iprod{r_{1}}{m_{1}}>\iprod{r_{1}}{m_{2}}$. Let $x(\epsilon)=x^{\star}-\epsilon\mathbf{1}$. It follows that $r_{2}^{x(\epsilon)}=r_{2}^{x^{\star}}-\epsilon r_{1}$. Because $m_{1},m_{2}\in\BR(x^{\star})$, it holds that $\iprod{r_{2}^{x^{\star}}}{m_{1}}=\iprod{r_{2}^{x^{\star}}}{m_{2}}$. Thus, for any $\epsilon>0$, it holds that $\iprod{r_{2}^{x(\epsilon)}}{m_{1}}<\iprod{r_{2}^{x(\epsilon)}}{m_{2}}$ or, equivalently, $x(\epsilon)\notin\cP_{m_{1}}$. This implies that no neighborhood of $\xopt$ is contained in $\cP_{m_{1}}$, which contradicts with the fact that $\xopt$ is an interior-point allocation of $\cP_{m_{1}}$.

While Proposition~\ref{prop:robustness_nonunique} shows that finding an optimal interior-point allocation is sufficient for achieving robustness to nonunique best responses of the follower, the existence of an optimal interior-point allocation is, in fact, also \emph{necessary} for robustness to nonunique best responses. We shall postpone the discussion on necessity until Theorem~\ref{prop:best responses to existence}. 

\subsection{Robustness to uncertain reward perception of the follower}

Besides robustness to nonunique best responses, one may be interested in other notions of robustness motivated by practical considerations. One such notion is robustness to uncertainty in how the follower perceives the modified reward function. The original formulation in (\ref{eq:allocation_problem_om}) assumes that the modified reward function of the follower is exactly $r_{2}^{x}$ when the allocation is $x$. However, the follower may perceive the modified reward differently as $r_{2}^{x+\delta}$, where $\delta\in\reals^{|\Sd|}$ represents uncertainty in how the follower perceives reward modifications. In light of such uncertainty, one reasonable goal is to seek an allocation $x^{\star}$ that is robust to any uncertainty $\delta$ up to a certain magnitude. 

The following proposition shows that an optimal interior-point allocation offers robustness to uncertainty in reward perception of the follower. 
\begin{prop}[Robustness to reward perception of the follower]
\label{prop:robustness_reward}If $\xopt$ is an optimal interior-point allocation with margin $c>0$, then $\optival(\xopt+\delta)=\pessval(\xopt+\delta)=v_{1}^{\star}$ for all $\norm{\delta}_{1}<c$.
\end{prop}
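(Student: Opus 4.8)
The plan is to argue that the perturbed allocation $\xopt+\delta$ still lies in the \emph{interior} of the \emph{same} optimal allocation region that contains $\xopt$, and then to replay the perturbation argument behind Proposition~\ref{prop:robustness_nonunique}. Since $\xopt$ is an optimal interior-point allocation, there is an optimal occupancy measure $m_1$ (so $\iprod{r_1}{m_1}=v_1^{\star}$) with $\xopt$ an interior-point allocation of $\cP_{m_1}$ of margin $c$, i.e.\ $\xopt+cv\in\cP_{m_1}$ whenever $\norm{v}_1\le1$. Fix $\delta$ with $\norm{\delta}_1<c$ and put $\epsilon_0\deq(c-\norm{\delta}_1)/\norm{\mathbf{1}}_1>0$. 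For each $w\in\{\delta,\ \delta-\epsilon_0\mathbf{1},\ \delta+\epsilon_0\mathbf{1}\}$ we have $\norm{w}_1\le\norm{\delta}_1+\epsilon_0\norm{\mathbf{1}}_1=c$, hence $w/c$ satisfies $\norm{w/c}_1\le1$ and $\xopt+w=\xopt+c(w/c)\in\cP_{m_1}$. In particular $\xopt+\delta\in\cP_{m_1}$, so $m_1\in\BR(\xopt+\delta)$, whence $\BR(\xopt+\delta)\neq\emptyset$ and $\optival(\xopt+\delta)\ge v_1^{\star}$; the same triangle-inequality bound shows $\xopt+\delta$ is in fact interior to $\cP_{m_1}$ with margin at least $c-\norm{\delta}_1$. (The reason I cannot invoke Proposition~\ref{prop:robustness_nonunique} verbatim is that $\xopt+\delta$ need not be admissible, i.e.\ need not belong to $\cX$; this is exactly the subtlety handled below.)

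Next I would pin down \emph{every} best response under $\xopt+\delta$. Let $m_2\in\BR(\xopt+\delta)$ be arbitrary; then $m_1,m_2\in\cM$ both maximize $\iprod{r_2^{\xopt+\delta}}{\cdot}$ over $\cM$, so $\iprod{r_2^{\xopt+\delta}}{m_1}=\iprod{r_2^{\xopt+\delta}}{m_2}$. Because $r_1$ has the special form ($r_1(s,a)=1$ for $s\in\Sd$ and $0$ otherwise), lowering every coordinate of the allocation by $\epsilon_0$ shifts the follower's reward exactly by $-\epsilon_0 r_1$: $r_2^{\xopt+\delta-\epsilon_0\mathbf{1}}=r_2^{\xopt+\delta}-\epsilon_0 r_1$ (this identity holds coordinatewise, including at states outside $\Sd$, where both sides equal $r$). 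Since $\xopt+\delta-\epsilon_0\mathbf{1}\in\cP_{m_1}$, i.e.\ $m_1\in\BR(\xopt+\delta-\epsilon_0\mathbf{1})$, we get $\iprod{r_2^{\xopt+\delta}-\epsilon_0 r_1}{m_1}\ge\iprod{r_2^{\xopt+\delta}-\epsilon_0 r_1}{m_2}$, and cancelling the $r_2^{\xopt+\delta}$-terms leaves $\iprod{r_1}{m_1}\le\iprod{r_1}{m_2}$. Running the identical computation with $\xopt+\delta+\epsilon_0\mathbf{1}\in\cP_{m_1}$ (using $r_2^{\xopt+\delta+\epsilon_0\mathbf{1}}=r_2^{\xopt+\delta}+\epsilon_0 r_1$) gives the reverse inequality $\iprod{r_1}{m_1}\ge\iprod{r_1}{m_2}$. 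Hence $\iprod{r_1}{m_2}=\iprod{r_1}{m_1}=v_1^{\star}$.

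Since $m_2\in\BR(\xopt+\delta)$ was arbitrary and $\BR(\xopt+\delta)\neq\emptyset$, both the maximum and the minimum of $\iprod{r_1}{\cdot}$ over $\BR(\xopt+\delta)$ equal $v_1^{\star}$, i.e.\ $\optival(\xopt+\delta)=\pessval(\xopt+\delta)=v_1^{\star}$, which is the claim. I expect the main obstacle to be the one flagged above: $\xopt+\delta$ may fail to be an admissible allocation, so the result cannot be derived by a one-line reduction to Proposition~\ref{prop:robustness_nonunique}; instead one must redo its perturbation argument, and in particular use the \emph{two-sided} perturbation ($\pm\epsilon_0\mathbf{1}$) to rule out $\iprod{r_1}{m_2}>v_1^{\star}$ as well as $\iprod{r_1}{m_2}<v_1^{\star}$, since the bound $\optival(\xopt+\delta)\le v_1^{\star}$ is no longer available ``for free'' from admissibility. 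A minor point to verify carefully is the coordinatewise identity $r_2^{x\pm\epsilon\mathbf{1}}=r_2^{x}\pm\epsilon r_1$, on which the exact cancellation of the follower-reward terms rests.
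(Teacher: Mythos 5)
Your proof is correct, and at the top level it follows the same idea as the paper: the perturbed point $\xopt+\delta$ remains in the interior of the same optimal allocation region $\cP_{m_1}$, and interior points of optimal regions force every best response to have leader-value $v_1^{\star}$. The difference is in how that second step is discharged. The paper's proof is a two-line reduction: it observes that $\xopt+\delta$ is interior to $\cP_{m}$ and then invokes Proposition~\ref{prop:robustness_nonunique} directly. You correctly decline this reduction on the grounds that $\xopt+\delta$ need not lie in $\cX$, whereas Definition~\ref{def:interior-point} (and hence the literal statement of Proposition~\ref{prop:robustness_nonunique}) requires admissibility; you then redo the argument from scratch with the two-sided perturbation $\pm\epsilon_0\mathbf{1}$ and the identity $r_2^{x\pm\epsilon\mathbf{1}}=r_2^{x}\pm\epsilon r_1$, which is exactly the mechanism the paper uses informally in the discussion following Proposition~\ref{prop:robustness_nonunique}. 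Your scruple is legitimate but, in substance, harmless to the paper's proof: the paper's own proof of Proposition~\ref{prop:robustness_nonunique} goes through Lemma~\ref{lem:OC difference}, which applies to any interior point of $\cP_{m}$ regardless of admissibility and in fact yields the stronger per-state conclusion $\sum_{a}\bigl(m(s,a)-m^{\star}(s,a)\bigr)=0$ for every $s\in\Sd$, of which your aggregate identity $\iprod{r_1}{m_2}=\iprod{r_1}{m_1}$ is the consequence actually needed. So your argument is a valid, self-contained variant that makes explicit a hypothesis the paper glosses over; what the paper's route buys is brevity and reuse of an already-proved lemma, while yours buys a proof that is airtight against the admissibility objection without having to reopen Lemma~\ref{lem:OC difference}.
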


\begin{proof}
Since $\xopt$ is an optimal interior-point allocation, there exists $m$ such that $x^{\star}+cv\in\cP_{m}$ for all $\norm{v}_{1}\leq1$ and $\iprod{r_{1}}{m}=v_{1}^{\star}$. Then $x^{\star}+\delta$ is in the interior of $\cP_{m}$ when $\norm{\delta}_{1}<c$. According to Proposition~\ref{prop:robustness_nonunique}, $\optival(\xopt+\delta)=\pessval(\xopt+\delta)=v_{1}^{\star}$.
\end{proof}
It can be seen that Proposition~\ref{prop:robustness_reward} implies Proposition~\ref{prop:robustness_nonunique}: By setting $\delta=0$, the result in Proposition~\ref{prop:robustness_reward} recovers the one in Proposition~\ref{prop:robustness_nonunique}. In other words, robustness to uncertainty in the follower's perceived reward is a stronger notion than robustness to nonunique best responses.

\subsection{Robustness to a boundedly rational follower\label{subsec:Robustness to boundedly rational attacker}}

In practice, the follower may not be able to solve the MDP to optimality and may instead produce a response that is only near-optimal, a phenomenon known as \emph{bounded rationality} \citep{simonBehavioralModelRational1955}. One common model of bounded rationality is \emph{quantal response}~\citep{luceIndividualChoiceBehavior1959}, which assumes that the decision-maker takes suboptimal actions with probabilities that diminish exponentially as the corresponding payoffs decrease~\citep{mcfaddenQuantalChoiceAnalysis1976}. More concretely, under a reward allocation $x$, a boundedly rational follower attempts to choose a policy that maximizes the payoff
\begin{equation}
\iprod{r_{2}^{x}}{m}-\tau\cdot\sum_{(s,a)\in\mathcal{S}\times\mathcal{A}}m(s,a)\log\frac{m(s,a)}{\sum_{a'\in\mathcal{A}}m(s,a')},\label{eq:standard ent-regularized}
\end{equation}
where $m$ is the occupancy measure induced by the policy of the follower, $\tau>0$ is a constant. The form of the payoff in~(\ref{eq:standard ent-regularized}) is inspired by the objectives used in entropy-regularized MDPs~\citep{neuUnifiedViewEntropyregularized2017}. The level of irrationality is modeled through the constant $\tau$. When $\tau=0$, the model recovers the rational case, where the follower will choose a best response. At the other extreme, as $\tau\rightarrow\infty$, the role of $r_{2}^{x}$ diminishes, and the follower is inclined to make the occupancy measure spread uniformly among all state-action pairs regardless of $r_{2}^{x}$. Since $\tau\cdot\sum_{(s,a)\in\mathcal{S}\times\mathcal{A}}m(s,a)\log\frac{m(s,a)}{\sum_{a'\in\mathcal{A}}m(s,a')}$ is strictly convex~\citep[Proposition~1]{neuUnifiedViewEntropyregularized2017} in $m$, the function in (\ref{eq:standard ent-regularized}) admits a unique maximizer.

It can be shown that an optimal interior-point allocation is robust to \emph{unmodeled} bounded rationality of the follower. 

\begin{prop}[Robustness to bounded rationality]
\label{prop:robustness of ent-regularized-1}Suppose that $(x^{\star},m^{\star})$ is an optimal solution to the reward design problem in~(\ref{eq:allocation_problem_om}), and $x^{\star}$ is an interior point of $\mathcal{P}_{m^{\star}}$. Let
\begin{equation}
    m_{\tau}^{\star}=\argmax_{m\in\cM}\left\{ \iprod{r_{2}^{x^{\star}}}{m}-\tau\cdot\sum_{(s,a)\in\mathcal{S}\times\mathcal{A}}m(s,a)\log\frac{m(s,a)}{\sum_{a'\in\mathcal{A}}m(s,a')}\right\} 
    \label{eq:def_m_tau_star}
\end{equation}
and $\Mdet{}^{\star}=\Mdet\cap\BR(x^{\star})$. Then for any $\tau>0$, it holds that 
\begin{equation}
\iprod{r_{1}}{m_{\tau}^{\star}}\geq\left(1-\frac{2\tau}{b(1-\gamma)}\log\vert\mathcal{A}\vert\right)\iprod{r_{1}}{m^{\star}},
\label{eq:bounded_rationality_lb}
\end{equation}
where $\gamma$ is the discount factor, and $b=\langle r_{2}^{x^{\star}},m^{\star}\rangle-\max_{m\in\Mdet\backslash\Mdet^{\star}}\langle r_{2}^{x^{\star}},m\rangle$.
\end{prop}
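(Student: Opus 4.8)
The plan is to bound the loss $\iprod{r_1}{m^\star} - \iprod{r_1}{m_\tau^\star}$ by exploiting two facts: (i) $m_\tau^\star$ is the unique maximizer of the entropy-regularized objective, so its regularized value is at least that of $m^\star$; and (ii) the gap $b$ separates the regularized optimal occupancy measure from any deterministic occupancy measure outside $\BR(x^\star)$, so $m_\tau^\star$ must put essentially all its mass in a neighborhood of the optimal face. Concretely, let $f_\tau(m) \deq \iprod{r_2^{x^\star}}{m} - \tau\,\ent(m)$ denote the regularized objective, where $\ent(m)$ abbreviates the conditional-entropy term in~(\ref{eq:def_m_tau_star}). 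The first step is to record the elementary bounds on $\ent(m)$: since the total occupancy $\sum_{s,a} m(s,a) = \mathbb{E}_{s_0\sim\rho}[\sum_t \gamma^t] = 1/(1-\gamma)$ for any $m \in \cM$, and each conditional distribution $m(s,\cdot)/\sum_{a'}m(s,a')$ has entropy at most $\log|\cA|$, we get $0 \le \ent(m) \le \frac{1}{1-\gamma}\log|\cA|$. (The normalization constant explains the $(1-\gamma)$ in the denominator of~(\ref{eq:bounded_rationality_lb}); I should double-check whether the paper's convention makes this $1/(1-\gamma)$ or something involving $\rho$, but for $\gamma<1$ this is the right order.)

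Next I would use optimality of $m_\tau^\star$: $f_\tau(m_\tau^\star) \ge f_\tau(m^\star)$, which rearranges to
\begin{equation*}
\iprod{r_2^{x^\star}}{m^\star} - \iprod{r_2^{x^\star}}{m_\tau^\star} \le \tau\bigl(\ent(m^\star) - \ent(m_\tau^\star)\bigr) \le \frac{\tau}{1-\gamma}\log|\cA|.
\end{equation*}
So $m_\tau^\star$ is a $\frac{\tau}{1-\gamma}\log|\cA|$-optimal occupancy measure for the \emph{unregularized} follower objective. The key structural step is then to convert this near-optimality in follower value into near-optimality in leader value, using the margin/gap constant $b$. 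The idea: write $m_\tau^\star$ in terms of its support over deterministic occupancy measures (every $m\in\cM$ is a convex combination of vertices of $\cM$, i.e. deterministic occupancy measures), split the vertices into those in $\Mdet^\star = \Mdet \cap \BR(x^\star)$ and those outside, and observe that a vertex outside $\Mdet^\star$ incurs a follower-value deficit of at least $b$. Hence the total weight $w$ that $m_\tau^\star$ places on vertices outside $\Mdet^\star$ satisfies $w \cdot b \le \iprod{r_2^{x^\star}}{m^\star} - \iprod{r_2^{x^\star}}{m_\tau^\star} \le \frac{\tau}{1-\gamma}\log|\cA|$, so $w \le \frac{\tau}{b(1-\gamma)}\log|\cA|$. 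Since the component of $m_\tau^\star$ supported on $\Mdet^\star$ is a sub-convex-combination of vertices all lying in the optimal allocation region and achieving leader value $v_1^\star = \iprod{r_1}{m^\star}$ (here I use that $x^\star$ is an \emph{interior} point of $\cP_{m^\star}$, so by Proposition~\ref{prop:robustness_nonunique} every best response — in particular every deterministic one — has the same leader value $v_1^\star$), while the remaining weight-$w$ component has leader value at least $0$ (because $r_1 \ge 0$), we get $\iprod{r_1}{m_\tau^\star} \ge (1-w)\,\iprod{r_1}{m^\star} \ge \bigl(1 - \frac{\tau}{b(1-\gamma)}\log|\cA|\bigr)\iprod{r_1}{m^\star}$, which is even slightly stronger than~(\ref{eq:bounded_rationality_lb}); the factor $2$ presumably absorbs a looser entropy bound (e.g. bounding $\ent(m^\star)-\ent(m_\tau^\star)$ by $2\cdot\frac{1}{1-\gamma}\log|\cA|$ rather than tracking the sign of $\ent(m_\tau^\star)\ge 0$), so I would just use the cruder bound to match the stated constant.

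The main obstacle I anticipate is the vertex-decomposition argument and the precise role of the interiority hypothesis. Writing $m_\tau^\star$ as a convex combination of deterministic occupancy measures is standard, but I need to be careful that the ``outside'' vertices contribute nonnegatively to the leader value (clear since $r_1 \succeq 0$) and, more delicately, that \emph{every} deterministic vertex in $\Mdet^\star$ yields leader value exactly $\iprod{r_1}{m^\star}$ rather than merely at most it — this is where interiority of $x^\star$ in $\cP_{m^\star}$ is essential, via the reasoning in the paragraph after Proposition~\ref{prop:robustness_nonunique} (perturbing $x^\star$ by $-\epsilon\mathbf 1$ would otherwise push mass off the optimal face). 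A secondary subtlety is making sure the convex-combination weights on $\Mdet^\star$ genuinely sum to $1-w$ and that the corresponding averaged occupancy measure is itself in $\BR(x^\star)$ — both follow because $\BR(x^\star)$ is a face of $\cM$ (a polytope), hence convex and closed under taking subconvex-combinations after renormalization. Once these points are nailed down, the rest is the short arithmetic above.
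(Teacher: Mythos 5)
Your proposal is correct and follows essentially the same route as the paper's proof: bound the conditional-entropy terms by $\frac{1}{1-\gamma}\log|\cA|$, use optimality of $m_\tau^\star$ to bound the follower-value gap $\iprod{r_2^{x^\star}}{m^\star}-\iprod{r_2^{x^\star}}{m_\tau^\star}$, decompose $m_\tau^\star$ over the vertices $\Mdet$ of $\cM$, bound the off-optimal weight by the gap divided by $b$, and invoke Proposition~\ref{prop:robustness_nonunique} (via interiority) to equate the leader value of every vertex in $\Mdet^\star$ with $\iprod{r_1}{m^\star}$. Your observation that tracking the sign of the entropy terms would improve the constant from $2$ to $1$ is a valid minor refinement, and the paper indeed uses the cruder two-sided bound you describe.
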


\begin{proof}
See Appendix~\ref{subsec:Transformation-of-Entropy-Regulared}. 
\end{proof}
Since $\cP_{\mopt}$ is an optimal allocation region, the allocation $\xopt$ is an optimal interior-point allocation. The left side of (\ref{eq:bounded_rationality_lb}) is the expected payoff of the leader when the follower is boundedly rational at level $\tau$. Proposition~\ref{prop:robustness of ent-regularized-1} provides a quantitative performance characterization on any optimal interior-point allocation in the presence of a near-rational follower, i.e., when $\tau$ is small. (The bound in~(\ref{eq:bounded_rationality_lb}) becomes vacuous when $\tau$ is large enough for the right side to become negative.) Specifically, the leader is guaranteed to receive an expected payoff not much worse than the optimal payoff $v_{1}^{\star}$ against a completely rational follower. In other words, even when incorrectly treating a near-rational follower as completely rational, the leader will still receive a payoff not far from her prediction. 

The discrepancy between the predicted payoff and the actual payoff depends on $b$, which measures the gap between the optimal payoff and the second-best payoff of a follower who only plays deterministic policies. The constant $b$ depends not only on the configuration of the MDP but also on $x^{\star}$. As $b$ decreases, the follower becomes more indifferent between a best response and a suboptimal response. This implies that a boundedly rational follower is less likely to play an optimal policy as desired by the leader, leading to a potential decrease in the payoff of the leader. 

A similar analysis can be carried out for other models of bounded rationality. For instance, another model of the payoff function of the follower, which is also used in entropy-regularized MDPs~\citep{neuUnifiedViewEntropyregularized2017}, is given by

\begin{equation}
\iprod{r_{2}^{x}}{m}+\tau\cdot\ent(m),\label{eq:boundedly rational reward fucntion}
\end{equation}
where $\ent(m)\deq-\sum_{(s,a)\in\mathcal{S}\times\mathcal{A}}m(s,a)\log m(s,a)$ is the entropy of $m$. For the payoff function in (\ref{eq:boundedly rational reward fucntion}), an optimal interior-point allocation also provably provides a robustness guarantee similar to the one in Proposition~\ref{prop:robustness of ent-regularized-1}; see Proposition~\ref{prop:Robustness to ent-regularized} in Appendix~\ref{subsec: proof of robustness to ent-regularize} for details. The third model of bounded rationality assumes that the follower can take any $\delta$-optimal response, which is a response that yields a payoff within $\delta$ from the optimal payoff \citep{ganRobustStackelbergEquilibria2023}. Appendix~\ref{subsec:delta-optimal-response-as} discusses how an optimal interior-point allocation is robust to $\delta$-optimal responses.

\section{Choosing an Optimal Allocation Region\label{sec:choosing_region}}

When an optimal interior-point allocation exists, some optimal allocation region $\cP^{\star}$ must have a nonempty interior. If $\cP^{\star}$ can be identified, then an optimal interior-point allocation can be found by picking any interior point of $\cP^{\star}$. However, is it possible to choose an arbitrary optimal allocation region? If not, is it necessary to examine all optimal allocation regions or only a subset? These questions will be answered in this section.

\subsection{Optimal allocation regions may have an empty interior}

Since an optimal occupancy measure $\mopt$ can be computed by solving the optimization problem in~(\ref{MILP}), it may be tempting to use the corresponding region $\cP_{\mopt}$ as the candidate optimal allocation region $\cP^{\star}$. However, this procedure may fail because not all optimal allocation regions have a nonempty interior.
\begin{example}
\label{exa: deterministic occupancy measure}Consider a similar setting as Example~\ref{exa:opti_pess_values}. As shown in Figure~\ref{fig: two decoys}, aside from $(3,3)$, we also allow reward allocated at $(3,1)$. The transition kernel at $(1,2)$ is modified as follows: If the follower chooses \textsf{right}, he will arrive at $(1,3)$ or $(1,1)$, each with probability $0.5$. If the follower chooses \textsf{up} (resp. \textsf{down}), he will arrive at $(1,3)$ (resp. $(1,1)$). If the follower chooses \textsf{left} or \textsf{stay}, he will remain at $(1,2)$. For any other state, the transition kernel remains deterministic. Denote by $x_{1}$ and $x_{2}$ the amount of resource allocated to the reward at $(3,3)$ and $(3,1)$, respectively. The total allocation budget $C$ satisfies $C>1$. Here, we also assume that the transition kernel at the states with the allocated reward only allows the follower to keep going right. Therefore, the follower cannot go back after arriving at the state with allocated reward.

Define a policy $\pi_{\mathrm{r}}$ of the follower such that $\pi_{\mathrm{r}}$ always chooses \textsf{right} in any state. As a result, because the follower starts at $(1,2)$, by following $\pi_{\mathrm{r}}$ and taking \textsf{right}, the follower may end up in two possible paths: 1) The follower arrives at $(1,3)$ with probability $0.5$ and subsequently follows Path 1; 2) the follower arrives at $(1,1)$ with probability 0.5 and subsequently follows Path 2. 

Let $\cX_{1}=\{x\mid x\succeq0,\ x_{1}=x_{2}+1\}$ and $m_{\mathrm{r}}$ be the occupancy measure induced by $\pi_{\mathrm{r}}$. We will show that $\cP_{m_{\mathrm{r}}}=\cX_{1}$. For any $x\in\cX_{1}$, the payoff for following Path 1 and that for following Path 2 are identical: In either case, the follower will receive a payoff of $2+x_{1}$. Thus, although the follower does not follow either Path 1 or 2 deterministically under $\pi_{\mathrm{r}}$, the follower is still always guaranteed to receive a payoff of $2+x_{1}$. This is also the largest payoff possible because the follower get into absorbing states after 4 steps. Therefore, $\pi_{\mathrm{r}}$ is an optimal policy under $x$, or equivalently $x\in\cP_{m_{\mathrm{r}}}$. On the other hand, for any $x\notin\cX_{1}$, the corresponding payoff of Path 1 will differ from that of Path 2. Let $\pi_{\mathrm{u}}$ (resp. $\pi_{\mathrm{d}}$) be a policy that chooses \textsf{up} (resp. \textsf{down}) at $(2,1)$ and follows Path 1 (resp. Path 2) onward, and $m_{\mathrm{u}}$ (resp. $m_{\mathrm{d}}$) be the induced occupancy measure. The payoff under $\pi_{\mathrm{r}}$ will be strictly lower than the payoff of the better policy between $\pi_{\mathrm{u}}$ and $\pi_{\mathrm{d}}$. The suboptimality of $\pi_{\mathrm{r}}$ implies $x\notin\cP_{m_{\mathrm{r}}}$. 

Consider instead $\cX_{2}=\{x\mid x\succeq0,\ x_{1}\geq x_{2}+1\}$. It is not difficult to verify that $\cP_{m_{\mathrm{u}}}=\cX_{2}$.

Both $m_{\mathrm{r}}$ and $m_{\mathrm{u}}$ are optimal occupancy measures because the follower is guaranteed to reach a state with allocated reward with probability $1$, giving the leader the maximum payoff of $1$. However, while $\cP_{m_{\mathrm{u}}}$ has a nonempty interior, the interior of $\cP_{m_{\mathrm{r}}}$ is empty because its dimension is $1$. 

\begin{figure}
\begin{centering}
\includegraphics[width=0.5\textwidth]{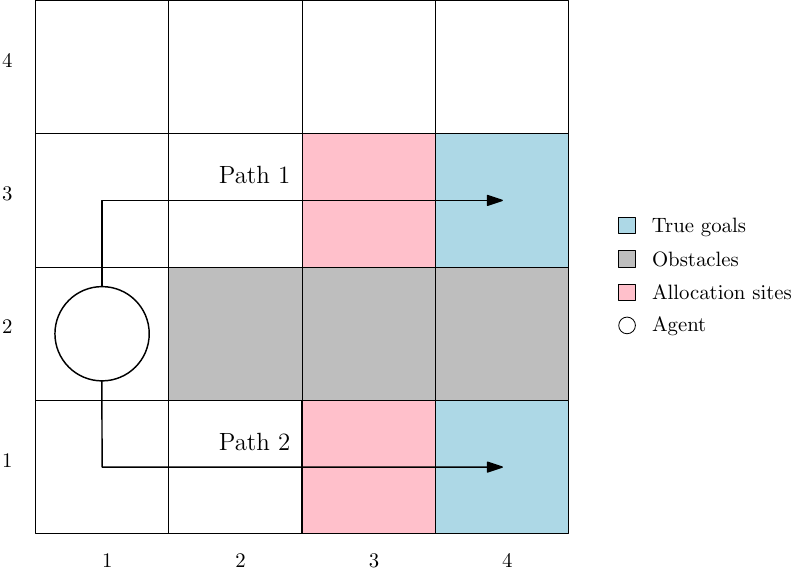}
\par\end{centering}
\caption{A reward design problem with two states that allow reward allocation, and the budget is $C>1$. There are 3 obstacles. All the transitions are deterministic except at $(1,2)$. If the follower chooses \textsf{right} at $(2,2)$, he will end up in $(1,3)$ or $(1,1)$ with equal probability; otherwise, the follower will move deterministically according to his action.\label{fig: two decoys}}

\end{figure}
\end{example}

\subsection{Allocation regions of deterministic occupancy measures}

Since not all optimal allocation regions have a nonempty interior, which region should be chosen to produce an optimal interior-point allocation? At first glance, it appears that one needs to examine every optimal allocation region or, equivalently, every optimal occupancy measure until a region with a nonempty interior is identified. Nevertheless, we will show that it suffices to focus on \emph{deterministic} optimal occupancy measures. 

Let $(x^{\star},m^{\star})$ be an optimal solution of the reward design problem in (\ref{eq:allocation_problem_om}). Let $\pi^{\star}$ be a policy that induces $m^{\star}$. This immediately implies that $\pi^{\star}$ is an optimal policy of the follower. The following proposition ensures that any $\pi\in\Pidet(\pistar)$ is also an optimal policy of the follower.

\begin{prop}
\label{lem:optimal_action} Let $(x^{\star},m^{\star})$ be an optimal solution of the reward design problem in (\ref{eq:allocation_problem_om}) and $\pi^{\star}$ be a policy that induces $m^{\star}$, i.e., $\pistar\in\Pi(\mopt)$. For any $\pi\in\Pidet(\pi^{\star})$, the induced occupancy measure $m^{\pi}$ is a best response of $x^{\star}$, i.e. $m^{\pi}\in\BR(x^{\star})$ or, equivalently, $x^{\star}\in\mathcal{P}_{m^{\pi}}$. 
\end{prop}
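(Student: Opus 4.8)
The plan is to exploit the variational characterization of $\BR(x^{\star})$ and the structure of occupancy measures under deterministic policies. Since $m^{\star}\in\BR(x^{\star})$, we have $\iprod{r_{2}^{x^{\star}}}{m^{\star}}=\max_{m\in\cM}\iprod{r_{2}^{x^{\star}}}{m}\deq v_{2}^{\star}$, and we want to show $\iprod{r_{2}^{x^{\star}}}{m^{\pi}}=v_{2}^{\star}$ for every $\pi\in\Pidet(\pi^{\star})$. The natural object to bring in is the follower's optimal value function $V_{2}^{\star}$ (equivalently the optimal $Q$-function $Q_{2}^{\star}$) for the reward $r_{2}^{x^{\star}}$. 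A standard MDP fact is that $m^{\star}$ being an optimal occupancy measure forces $\pi^{\star}$ to be supported only on optimal actions: whenever $m^{\star}(s,a)>0$ with $\sum_{a'}m^{\star}(s,a')>0$, the action $a$ must be greedy with respect to $Q_{2}^{\star}$ at $s$. This is essentially the complementary slackness condition in~(\ref{eq:KKT}): from $m^{\star}\perp A^{T}\nu^{\star}-r_{2}^{x^{\star}}$ together with the LP duality identification of $\nu^{\star}$ with $V_{2}^{\star}$, the coordinates of $m^{\star}$ that are positive sit exactly where the Bellman inequality is tight.

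From here the argument is short. Fix $\pi\in\Pidet(\pi^{\star})$. By definition of $\Pidet(\pi^{\star})$, at every state $s$ the single action $a$ chosen by $\pi$ satisfies $\pi^{\star}(s,a)>0$; hence, by the previous paragraph, $a$ is greedy for $Q_{2}^{\star}$ at $s$ — but only at states $s$ that are reachable, i.e.\ where $\sum_{a'}m^{\star}(s,a')>0$. States with $\sum_{a'}m^{\star}(s,a')=0$ are not visited by $\pi^{\star}$, and (this is the one point that needs a small argument) they are also not visited by $\pi$, since $\pi$ only moves along transitions that $\pi^{\star}$ uses with positive probability, so the set of states reachable under $\pi$ from $\rho$ is contained in the support of $m^{\star}$'s state-marginal. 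Therefore $\pi$ is greedy with respect to $Q_{2}^{\star}$ at every state it actually visits, which by the Bellman optimality equation implies $V_{2}^{\pi}(s;x^{\star})=V_{2}^{\star}(s)$ for all such $s$, and in particular $\E_{s\sim\rho}[V_{2}^{\pi}(s;x^{\star})]=\E_{s\sim\rho}[V_{2}^{\star}(s)]=v_{2}^{\star}$. Rewriting the left side as $\iprod{r_{2}^{x^{\star}}}{m^{\pi}}$ via the occupancy-measure identity gives $m^{\pi}\in\BR(x^{\star})$, i.e.\ $x^{\star}\in\cP_{m^{\pi}}$.

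I expect the main obstacle to be the bookkeeping around unreachable states: making precise that ``$\pi\in\Pidet(\pi^{\star})$ only visits states in the support of $m^{\star}$'' and that the reward $r_{2}^{x^{\star}}$ and the constraint $m^{\pi}\perp A^{T}\nu^{\star}-r_{2}^{x^{\star}}$ behave correctly on those states. One clean way to handle this is to verify directly that $(m^{\pi},\nu^{\star})$ satisfies the KKT system~(\ref{eq:KKT}): feasibility $m^{\pi}\in\cM$ is automatic, $A^{T}\nu^{\star}-r_{2}^{x^{\star}}\succeq0$ is inherited from the dual feasibility of $\nu^{\star}$ (it does not depend on $m$), and complementary slackness $m^{\pi}\perp A^{T}\nu^{\star}-r_{2}^{x^{\star}}$ follows because $m^{\pi}(s,a)>0$ implies $s$ is reachable under $\pi^{\star}$ and $a$ is chosen by $\pi^{\star}$ with positive probability, hence $(A^{T}\nu^{\star}-r_{2}^{x^{\star}})(s,a)=0$ by complementary slackness for $m^{\star}$. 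Then optimality of $m^{\pi}$ for~(\ref{eq:prob_follower}) — equivalently $m^{\pi}\in\BR(x^{\star})$ — is immediate from sufficiency of KKT for this LP. This KKT route avoids invoking value-function machinery and keeps the proof self-contained within the framework already set up in Section~\ref{subsec:Computing-an-optimal}.
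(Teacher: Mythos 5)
Your proof is correct, but it takes a genuinely different route from the paper's. The paper proves this proposition via the optimal $Q$-function of the follower's MDP and two applications of the performance difference lemma: it first shows (in a separate lemma) that any optimal policy puts mass only on actions $a$ with $Q_{2}^{\mathrm{opt}}(s,a)=\max_{a'}Q_{2}^{\mathrm{opt}}(s,a')$, deduces that every $\pi\in\Pidet(\pi^{\star})$ is greedy with respect to $Q_{2}^{\mathrm{opt}}$, and then applies the performance difference lemma again to conclude $V_{2}^{\pi}=V_{2}^{\mathrm{opt}}$. Your committed argument --- verifying that $(m^{\pi},\nu^{\star})$ satisfies the KKT system~(\ref{eq:KKT}) directly --- stays entirely within the LP framework of Section~\ref{subsec:Computing-an-optimal}: dual feasibility of $\nu^{\star}$ is independent of the occupancy measure, and complementary slackness for $m^{\pi}$ reduces to showing $m^{\pi}(s,a)>0\Rightarrow m^{\star}(s,a)>0$, which you correctly obtain from $\pi(s,a)>0\Rightarrow\pi^{\star}(s,a)>0$ together with the observation that every state reachable under $\pi$ is reachable under $\pi^{\star}$ (any positive-probability path under $\pi$ has positive probability under $\pi^{\star}$). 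This buys you something concrete: you handle the unreachable states explicitly, whereas the paper's auxiliary lemma asserts greediness of $\pi^{\star}$ at \emph{every} state, which for a policy known only to maximize the $\rho$-averaged value requires a little more care at states outside the support of $d_{\rho}^{\pi^{\star}}$. The paper's value-function route, in exchange, produces intermediate facts (the $Q$-function identities) that it reuses in the proof of Proposition~\ref{prop:all determinisitc m are optimal}, which your LP argument would not directly supply.
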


\begin{proof}
See Appendix~\ref{subsec:Proof-of-Lemma_optimal_action}.
\end{proof}
Not only is any policy $\pi\in\Pidet(\pistar)$ optimal for the follower, but the policy is also optimal for the leader even though the reward function $r_{1}$ of the leader generally differs from $r_{2}^{\xopt}$.
\begin{prop}
\label{prop:all determinisitc m are optimal}Under the same conditions in Proposition~\ref{lem:optimal_action}, the induced occupancy measure $m^{\pi}$ also satisfies $\iprod{r_{1}}{m^{\pi}}=v_{1}^{\star}$ . 
\end{prop}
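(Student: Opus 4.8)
The plan is to leverage Proposition~\ref{lem:optimal_action}, which already tells us that $m^{\pi}\in\BR(\xopt)$ for every $\pi\in\Pidet(\pistar)$, and to combine this with the optimality of $(\xopt,\mopt)$ for the leader. The key point is that $\mopt$ is a convex combination of the deterministic occupancy measures $\{m^{\pi}\mid\pi\in\Pidet(\pistar)\}$, and the leader's objective $\iprod{r_1}{\cdot}$ is linear. So if any one of these $m^{\pi}$ had $\iprod{r_1}{m^{\pi}}<v_1^{\star}$, the average could still equal $v_1^{\star}$, which does \emph{not} immediately give a contradiction --- hence the real content is a sandwiching argument using the optimality of the \emph{pair} $(\xopt,m^{\pi})$ as a candidate solution to~(\ref{eq:allocation_problem_om}).

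Concretely, I would proceed as follows. First, fix $\pi\in\Pidet(\pistar)$ and let $m^{\pi}$ be its induced occupancy measure. By Proposition~\ref{lem:optimal_action}, $m^{\pi}\in\BR(\xopt)$, so the pair $(\xopt,m^{\pi})$ is feasible for~(\ref{eq:allocation_problem_om}); since $v_1^{\star}$ is the optimal value, this gives the upper bound $\iprod{r_1}{m^{\pi}}\le v_1^{\star}$. The lower bound is where the argument has to do work: I claim $\iprod{r_1}{m^{\pi}}\ge v_1^{\star}$ as well. To see this, recall $\mopt$ is induced by $\pistar$, and one can write $\pistar$ (restricted to states visited with positive probability) as a mixture over policies in $\Pidet(\pistar)$; correspondingly $\mopt=\sum_{j}\lambda_j m^{\pi_j}$ with $\pi_j\in\Pidet(\pistar)$, $\lambda_j\ge 0$, $\sum_j\lambda_j=1$. (This decomposition of an occupancy measure into deterministic ones is standard; alternatively one peels off one randomized state at a time.) Linearity then yields $v_1^{\star}=\iprod{r_1}{\mopt}=\sum_j\lambda_j\iprod{r_1}{m^{\pi_j}}$. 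Combined with $\iprod{r_1}{m^{\pi_j}}\le v_1^{\star}$ for every $j$, this forces $\iprod{r_1}{m^{\pi_j}}=v_1^{\star}$ for every $j$ with $\lambda_j>0$. Finally, to cover \emph{all} $\pi\in\Pidet(\pistar)$ and not just those appearing in one particular decomposition, I would note that the choice of $\pistar\in\Pi(\mopt)$ and the mixture is flexible: any $\pi\in\Pidet(\pistar)$ can be assigned positive weight in some valid decomposition of $\mopt$ (e.g.\ mix it with an $\epsilon$ weight against any other deterministic refinement), and the same sandwiching then applies.

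The main obstacle I anticipate is making the decomposition step rigorous --- namely, that every $m^{\pi}$ with $\pi\in\Pidet(\pistar)$ can be realized with positive coefficient in a convex decomposition of $\mopt$ into deterministic occupancy measures supported on $\Pidet(\pistar)$. A clean way around this is to avoid the ``appears in the decomposition'' framing entirely: given an arbitrary $\pi\in\Pidet(\pistar)$, directly build a policy $\hat\pi$ that plays $\pi$ with probability $\epsilon$ and $\pistar$ with probability $1-\epsilon$ (a valid randomized policy refining $\pistar$ in the sense that $\hat\pi(s,a)=0$ whenever $\pistar(s,a)=0$), whose induced occupancy measure $\hat m$ is again a best response of $\xopt$ by Proposition~\ref{lem:optimal_action} applied to $\hat\pi\in\Pi(\hat m)$ --- wait, more carefully, one should instead observe $\hat m = \epsilon\, m^{\pi} + (1-\epsilon)\,\mopt$ lies in the (convex) best-response set $\BR(\xopt)$, so $(\xopt,\hat m)$ is feasible for~(\ref{eq:allocation_problem_om}) and hence $\epsilon\iprod{r_1}{m^{\pi}}+(1-\epsilon)v_1^{\star}=\iprod{r_1}{\hat m}\le v_1^{\star}$, giving $\iprod{r_1}{m^{\pi}}\le v_1^{\star}$; the reverse inequality comes from feasibility of $(\xopt,m^{\pi})$ itself. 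This is essentially the same two-sided bound but sidesteps the structural decomposition lemma, so I would present it that way. I would also double-check the edge case where $\sum_{a'}\mopt(s,a')=0$ at some state $s$, so that $\pistar$ is defined arbitrarily there via $\pi_0$ --- but such states contribute nothing to $\iprod{r_1}{\cdot}$ since the corresponding $m$-entries vanish, so they are harmless.
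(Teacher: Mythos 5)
Your argument establishes only one of the two inequalities. Feasibility of $(\xopt,m^{\pi})$ for~(\ref{eq:allocation_problem_om}) gives $\iprod{r_1}{m^{\pi}}\le v_1^{\star}$, since $v_1^{\star}$ is the \emph{maximum} of the leader's objective over feasible pairs; it cannot give the reverse inequality, contrary to your closing claim. Your mixture $\hat m=\epsilon m^{\pi}+(1-\epsilon)\mopt$ suffers from the same problem: $(\xopt,\hat m)$ is feasible, so $\epsilon\iprod{r_1}{m^{\pi}}+(1-\epsilon)v_1^{\star}=\iprod{r_1}{\hat m}\le v_1^{\star}$, which is again just $\iprod{r_1}{m^{\pi}}\le v_1^{\star}$. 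So the proof as written never establishes $\iprod{r_1}{m^{\pi}}\ge v_1^{\star}$, which is the substantive half of the proposition; this is a genuine gap, not a presentational issue.

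The mixture idea can be repaired by running it in the opposite direction: peel $m^{\pi}$ \emph{off of} $\mopt$ rather than mixing it in. Because $\pi\in\Pidet(\pistar)$ only takes actions that $\pistar$ takes with positive probability, one has $m^{\pi}(s,a)>0\Rightarrow\mopt(s,a)>0$, so for small $\epsilon>0$ the point $m'\deq(\mopt-\epsilon m^{\pi})/(1-\epsilon)$ is nonnegative and satisfies $Am'=\rho$, hence $m'\in\cM$; moreover $\iprod{r_2^{\xopt}}{m'}=\iprod{r_2^{\xopt}}{\mopt}$ because both $\mopt$ and $m^{\pi}$ attain the follower's optimal value, so $m'\in\BR(\xopt)$ and therefore $\iprod{r_1}{m'}\le v_1^{\star}$. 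Then $v_1^{\star}=\iprod{r_1}{\mopt}=\epsilon\iprod{r_1}{m^{\pi}}+(1-\epsilon)\iprod{r_1}{m'}\le\epsilon\iprod{r_1}{m^{\pi}}+(1-\epsilon)v_1^{\star}$ yields the missing bound $\iprod{r_1}{m^{\pi}}\ge v_1^{\star}$, and it also makes your decomposition-into-vertices discussion unnecessary. For comparison, the paper argues entirely differently and locally: it perturbs $\pistar$ one state--action pair at a time, uses the performance difference lemma to deduce $Q_1^{\star}(s,a)\le V_1^{\star}(s)$ for every $a$ with $\pistar(s,a)\in(0,1)$, upgrades this to equality via the Bellman consistency equation, and then concludes $V_1^{\pi}=V_1^{\pistar}$ for every $\pi\in\Pidet(\pistar)$. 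Your occupancy-measure route is a legitimate and arguably shorter alternative, but only after the direction of the mixture is corrected.
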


\begin{proof}
See Appendix~\ref{subsec:Proof-of-Proposition all  deterministic oc are}.
\end{proof}
Proposition~\ref{prop:all determinisitc m are optimal} not only shows the existence of deterministic optimal occupancy measures but also suggests an algorithm to find them. If the given optimal occupancy measure $\mopt$ is randomized, one can first recover a randomized policy $\pistar$ from $\mopt$. Then, a deterministic policy $\pi\in\Pidet(\pistar)$ can be constructed by examining the actions taken by $\pistar$ at each $s\in\cS$: Whenever more than one action is taken by $\pistar$ with nonzero probability, an arbitrary one of these actions will be assigned to $\pi$ to ensure $\pi\in\Pidet(\pistar)$. The induced deterministic occupancy measure $m^{\pi}$ from $\pi$ is guaranteed to be optimal according to Proposition~\ref{prop:all determinisitc m are optimal}. 
\begin{thm}
\label{thm:deterministic region contains non}Let $m^{\star}$ be a randomized optimal occupancy measure and $\pi^{\star}\in\Pi(m^{\star})$. For any $m\in\Mdet(\pi^{\star})$, it holds that $\mathcal{P}_{\mopt}\subseteq\mathcal{P}_{m}$, and $\cP_{m}$ is an optimal allocation region.
\end{thm}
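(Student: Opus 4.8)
The plan is to unpack the definition of the allocation region and reduce the claim to a single inequality about inner products. Fix $m \in \Mdet(\pi^\star)$, which means $m = m^\pi$ for some $\pi \in \Pidet(\pi^\star)$. I want to show two things: (i) $\cP_{\mopt} \subseteq \cP_m$, and (ii) $\cP_m$ is an optimal allocation region, i.e.\ $\iprod{r_1}{m} = v_1^\star$ (since $\cP_m$ being an allocation region is automatic from Definition~\ref{def:allocation_region}, the content is that the occupancy measure identifying it is optimal). Part (ii) is immediate: by Proposition~\ref{prop:all determinisitc m are optimal}, every $m^\pi$ with $\pi \in \Pidet(\pi^\star)$ satisfies $\iprod{r_1}{m^\pi} = v_1^\star$, so $m$ is an optimal occupancy measure and hence $\cP_m$ is an optimal allocation region. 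So the real work is part (i).

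For part (i), take any $x \in \cP_{\mopt}$; I must show $x \in \cP_m$, i.e.\ $\iprod{r_2^x}{m} \geq \iprod{r_2^x}{m'}$ for all $m' \in \cM$. Since $x \in \cP_{\mopt}$ means $\mopt \in \BR(x)$, it suffices to show $\iprod{r_2^x}{m} = \iprod{r_2^x}{\mopt}$, because then $m$ achieves the same (maximal) value against $r_2^x$ as $\mopt$ does, hence $m \in \BR(x)$. Now here is the key observation: $x \in \cP_{\mopt}$ in particular implies $x \in \BR(x)$'s region, and by Proposition~\ref{lem:optimal_action} (applied with this $x$ in place of $x^\star$? — no, that proposition is stated for the optimal $x^\star$ of the reward design problem, not an arbitrary $x$). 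Let me instead argue directly. The equality $\iprod{r_2^x}{m^\pi} = \iprod{r_2^{x^\star}}{\mopt}$ should follow from the fact that $\pi \in \Pidet(\pi^\star)$ only reassigns probability mass among actions that $\pi^\star$ already plays, and — crucially — along any state reachable under $\pi$, the follower is indifferent among the supported actions because $\pi^\star$ is optimal. This is essentially the content behind Proposition~\ref{lem:optimal_action}'s proof; I expect the cleanest route is to invoke a value-function / advantage-function argument: under $x^\star$, the optimal action-value $Q_2^{x^\star}(s,a)$ equals the optimal state-value $V_2^{x^\star}(s)$ for every $(s,a)$ in the support of $\pi^\star$, so replacing $\pi^\star$ by any $\pi \in \Pidet(\pi^\star)$ leaves the follower's value unchanged, giving $\iprod{r_2^{x^\star}}{m^\pi} = \iprod{r_2^{x^\star}}{\mopt} = v_2^\star(x^\star)$, and $m^\pi \in \BR(x^\star)$, i.e.\ $x^\star \in \cP_{m^\pi} = \cP_m$.

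But I need this for \emph{every} $x \in \cP_{\mopt}$, not just $x = x^\star$. The extension works because the support structure is what matters: write $S(\pi^\star)$ for the set of state-action pairs played with positive probability by $\pi^\star$, equivalently $\mathrm{supp}(\mopt)$ (up to unreachable states). For any $x \in \cP_{\mopt}$, optimality of $\mopt$ against $r_2^x$ forces, via LP complementary slackness (the KKT conditions~(\ref{eq:KKT})), that the follower is indifferent among all actions in $\mathrm{supp}(\mopt(s,\cdot))$ at every reachable state $s$. Since $\pi \in \Pidet(\pi^\star)$ picks, at each reachable $s$, a single action inside this support, the deterministic policy $\pi$ is still optimal against $r_2^x$, so $m^\pi \in \BR(x)$, i.e.\ $x \in \cP_{m^\pi} = \cP_m$. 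This gives $\cP_{\mopt} \subseteq \cP_m$. The main obstacle is making the ``indifference on the support propagates to $m^\pi$'' step fully rigorous for an arbitrary $x \in \cP_{\mopt}$ rather than the specific $x^\star$; I would handle it either by a Bellman-equation argument on the follower's MDP with reward $r_2^x$ (every policy supported on the optimal support is optimal) or by directly citing the complementary-slackness characterization~(\ref{eq:KKT}) together with the flow constraints, noting that $m^\pi$ and $\mopt$ have the same dual-feasible $\nu$ and both satisfy $m \perp (A^T\nu - r_2^x)$. A subtlety worth checking: unreachable states under $\pi$ (where $m^\pi$ puts zero mass) cause no problem since the complementary slackness condition is vacuous there. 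Everything else is routine once this step is in place.
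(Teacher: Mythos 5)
Your proposal is correct and follows essentially the same route as the paper: part (ii) is exactly the paper's appeal to Proposition~\ref{prop:all determinisitc m are optimal}, and part (i) is the paper's appeal to Proposition~\ref{lem:optimal_action} applied to an arbitrary $x\in\cP_{\mopt}$, which is legitimate because (as you observe) the proof of that proposition uses only $\mopt\in\BR(x)$ and not the leader-optimality of $x$. Your explicit remark on this point, and the indifference-on-the-support argument you sketch to justify it, is precisely the content of the paper's Lemma~\ref{lem:optimal_action-1} and the performance-difference step, so no new ideas are needed.
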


\begin{proof}
Consider any $x\in\mathcal{P}_{m^{\star}}$. Because $m\in\Mdet(\pistar)$, $m$ is induced by a policy in $\Pidet(\pistar)$ by definition. From Proposition~\ref{lem:optimal_action}, we know that $x\in\mathcal{P}_{m}$, implying $\mathcal{P}_{\mopt}\subseteq\mathcal{P}_{m}$. In addition, Proposition~\ref{prop:all determinisitc m are optimal} ensures that $m$ is an optimal occupancy measure, implying that $\mathcal{P}_{m}$ is an optimal allocation region.
\end{proof}
As implied by Theorem~\ref{thm:deterministic region contains non}, if $\cP_{\mopt}$ has a nonempty interior, so does $\cP_{m}$ for any (deterministic) occupancy measure $m\in\Mdet(\pi^{\star})$, where $\pi^{\star}$ is an arbitrary policy that induces $m^{\star}$. Thus, only allocation regions of deterministic optimal occupancy measures need to be examined to find an optimal interior-point allocation. Recall that the set $\cM$ of occupancy measures is generally uncountably infinite. Theorem~\ref{thm:deterministic region contains non} reduces the relevant occupancy measures to a finite set $\Mdet$. Such a reduction has important theoretical implications and will be used in Section~\ref{sec:Condition-on-Existence} to establish conditions for the existence of optimal interior-point allocations.

It is worth noting that Theorem~\ref{thm:deterministic region contains non} does not imply that the allocation region of a deterministic occupancy measure always has a nonempty interior. For instance, although $\pi_{\mathrm{r}}$ in Example~\ref{exa: deterministic occupancy measure} is a deterministic policy, its corresponding allocation region $\cP_{m_{\mathrm{r}}}$ has an empty interior.

\section{Existence of Optimal Interior-Point Allocations\label{sec:Condition-on-Existence} }

While Section~\ref{sec:ipa_robust} shows that optimal interior-point allocations offer robustness, it remains a question whether an optimal interior-point allocation is guaranteed to exist. Indeed, as shown in Section~\ref{sec:choosing_region}, although it suffices to examine deterministic occupancy measures, the allocation region of a deterministic occupancy measure may still have an empty interior. As will be shown in this section, the allocation budget plays an important role in the existence of an optimal interior-point allocation. Moreover, the existence of an optimal interior-point allocation is not only sufficient (Proposition~\ref{prop:robustness_nonunique}) but also necessary for the existence of a robust allocation against nonunique best responses of the follower.

\subsection{Influence of allocation budget}

Before presenting the general result, we would like to use an example to illustrate how the existence of an optimal interior-point allocation can be affected by the allocation budget. 
\begin{example}
\label{exa:budget}Consider the environment in Example~\ref{exa:opti_pess_values}. Let $\pi_{\mathrm{u}}$ be the policy that follows Path 1 and $m_{\mathrm{u}}$ the occupancy measure induced by $\pi_{\mathrm{u}}$. Without considering the allocation budget, we will show that an allocation $x$ is optimal for the leader if and only if $x\geq1$. When $x\geq1$, from the perspective of the follower, the total payoff of Path 1 is not less than that of Path 2. Thus, $\pi_{\mathrm{u}}$ is optimal for the follower and will give the leader the maximum payoff of $1$. On the other hand, when $x<1$, the follower will choose Path 2 over Path 1 and give the leader a suboptimal payoff of $0$. This also shows that $m_{\mathrm{u}}$ is the only optimal occupancy measure and that the corresponding optimal allocation region is given by $\cP_{m_{\mathrm{u}}}=\{x\mid x\geq1\}$. 

Next, we will show how the allocation budget $C$ affects the existence of an optimal interior-point allocation. When $C>1$, the leader can choose any $x\in(1,C]$, which lies within the interior of $\cP_{m_{\mathrm{u}}}$. Because $\cP_{m_{\mathrm{u}}}$ is an optimal allocation region, $x$ must be an optimal interior-point allocation. However, an optimal interior-point allocation does not exist when $C=1$: The only optimal allocation the leader can choose is $x=1$, which does not belong to the interior of $\cP_{m_{\mathrm{u}}}$. 
\end{example}

Example~\ref{exa:budget} shows that the allocation budget is an important factor in the existence of an optimal interior-point allocation. It suggests that an optimal interior-point allocation may fail to exist when any optimal allocation must exhaust the allocation budget. To examine whether the budget needs to be exhausted, consider the following problem:

\begin{equation}
\begin{alignedat}{2} & \optmax_{x\in\mathcal{X},\ m\in\mathcal{M}} & \quad & C-\sum_{i=1}^{\vert\mathcal{S}_{d}\vert}x_{i}\\
 & \optst &  & \iprod{r_{1}}{m}=v_{1}^{\star}\\
 &  &  & m\in\BR(x).
\end{alignedat}
\label{eq:margin problem}
\end{equation}
The constraints in~(\ref{eq:margin problem}) ensure that $(x,m)$ is an optimal solution to the reward design problem in~(\ref{eq:allocation_problem_om}). Thus, the optimal value of~(\ref{eq:margin problem}) is $0$ if and only if any optimal allocation must exhaust the budget. The following theorem shows that the optimal value of problem~(\ref{eq:margin problem}) determines the existence of an optimal interior-point allocation.
\begin{thm}
\label{Thm:Sufficiency}The reward design problem in~(\ref{eq:allocation_problem_om}) admits at least one optimal interior-point allocation if and only if the optimal value of problem~(\ref{eq:margin problem}) is strictly positive.
\end{thm}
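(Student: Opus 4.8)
The plan is to prove both directions of the equivalence, with the harder direction being that a strictly positive optimal value of~(\ref{eq:margin problem}) is \emph{sufficient} for the existence of an optimal interior-point allocation. For the easy (necessity) direction, suppose an optimal interior-point allocation $\xopt$ exists. Then $\xopt$ lies in the interior of some optimal allocation region $\cP_{m}$ with $\iprod{r_{1}}{m}=v_{1}^{\star}$, so there is a full-dimensional ball around $\xopt$ contained in $\cP_{m}$, and in particular $\xopt - \epsilon\mathbf{1}$ is still a best response for $m$ for all small $\epsilon>0$. I would argue that the budget cannot be tight at $\xopt$: if $\sum_i \xopt_i = C$, then perturbing in a direction that strictly decreases $\sum_i x_i$ while staying admissible (note $\xopt \succeq 0$, and if some coordinate is zero we instead decrease a strictly positive coordinate — there must be one if $C>0$, and the degenerate case $C=0$ forces $\cX=\{0\}$ and can be handled separately) stays in $\cP_m$ by the interior-point property. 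Hence there is an admissible optimal allocation with $\sum_i x_i < C$, making the optimal value of~(\ref{eq:margin problem}) strictly positive.

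For the sufficiency direction, suppose the optimal value of~(\ref{eq:margin problem}) is some $\eta > 0$, attained by a feasible pair $(x^{\sharp}, m^{\sharp})$ with $\sum_i x_i^{\sharp} = C - \eta$ and $\iprod{r_1}{m^{\sharp}} = v_1^{\star}$ and $m^{\sharp} \in \BR(x^{\sharp})$. By Theorem~\ref{thm:deterministic region contains non}, I may pass to a \emph{deterministic} optimal occupancy measure: pick $\pi^{\sharp} \in \Pi(m^{\sharp})$ and any $\mdet \in \Mdet(\pi^{\sharp})$; then $x^{\sharp} \in \cP_{\mdet}$, $\cP_{\mdet}$ is an optimal allocation region, and $\cP_{m^{\sharp}} \subseteq \cP_{\mdet}$. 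The key structural fact I would establish is that $\cP_{\mdet}$ is \emph{upward closed} in a suitable sense along directions that help the follower reach $\Sd$: more precisely, since $r_2^{x+\delta} = r_2^x + (\text{reward shift supported on }\Sd)$ and $\mdet$ is a best response that (being optimal for the leader with $\iprod{r_1}{\mdet}=v_1^\star$) puts occupancy mass on $\Sd$, adding reward at states in $\Sd$ can only help $\mdet$. Concretely I would show $x^{\sharp} + \delta \in \cP_{\mdet}$ for every $\delta \succeq 0$; this follows because $\iprod{r_2^{x^{\sharp}+\delta}}{\mdet} - \iprod{r_2^{x^{\sharp}+\delta}}{m'} = \iprod{r_2^{x^{\sharp}}}{\mdet} - \iprod{r_2^{x^{\sharp}}}{m'} + \iprod{\delta}{(\text{occupancy on }\Sd)_{\mdet} - (\text{occupancy on }\Sd)_{m'}}$, and I claim $\mdet$ already maximizes the $\Sd$-occupancy among all best responses of $x^{\sharp}$ (because achieving $v_1^\star$ means the follower is guided to $\Sd$ as much as possible), so the extra term is nonnegative for all $m' \in \BR(x^{\sharp})$, and for $m' \notin \BR(x^{\sharp})$ the first difference is strictly positive and dominates for small $\delta$ — but since I want all $\delta \succeq 0$, I would instead invoke that $\cP_{\mdet}$ is a closed convex polyhedron and argue membership is preserved under the partial order more carefully, possibly restricting to a cone.

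Granting the monotonicity claim, the construction is immediate: take $x^{\sharp} + \frac{\eta}{2|\Sd|}\mathbf{1}$. This is admissible since $\sum_i (x^{\sharp}_i + \frac{\eta}{2|\Sd|}) = C - \eta + \frac{\eta}{2} = C - \frac{\eta}{2} \le C$ and all coordinates are nonnegative; and any perturbation $v$ with $\norm{v}_1 \le \frac{\eta}{2|\Sd|}$ keeps us in $\cP_{\mdet}$ — for the coordinates where $v$ is positive we use the monotonicity claim, and for the coordinates where $v$ is negative we need $x^{\sharp} + \frac{\eta}{2|\Sd|}\mathbf{1} + v \succeq x^{\sharp}$ componentwise, which is not quite guaranteed, so the cleaner route is to separately use that $\cP_{\mdet}$ contains the box $\{x^{\sharp} + \delta : 0 \preceq \delta \preceq \frac{\eta}{|\Sd|}\mathbf{1}\}$ and take its center, which is then an interior point with margin at least $\frac{\eta}{2|\Sd|}$ in the $\ell_\infty$ (hence, up to a constant, $\ell_1$) sense. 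The main obstacle I anticipate is rigorously proving the monotonicity/upward-closedness of the optimal deterministic allocation region with respect to allocations at states in $\Sd$; this hinges on showing that an optimal-for-the-leader best response maximizes occupancy on $\Sd$ among \emph{all} best responses of the same allocation, which should follow from the specific form $r_1 = \mathbf{1}_{\Sd}$ together with the fact that $\iprod{r_1}{\mdet} = v_1^\star$ is the global optimum, but needs the KKT/complementary-slackness characterization in~(\ref{eq:KKT}) to nail down.
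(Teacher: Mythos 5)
Your necessity direction is correct and, if anything, more elementary than the paper's: the paper argues by contradiction using a topological lemma (the intersection of a convex set having nonempty interior with an open ball has nonempty interior), whereas your direct perturbation of a strictly positive coordinate of $\xopt$ produces a feasible point of~(\ref{eq:margin problem}) with positive slack immediately. The gap is in the sufficiency direction, and it is the central claim that fails. You assert that because $\mdet$ achieves $v_{1}^{\star}=\iprod{r_{1}}{\mdet}$ it ``maximizes the $\Sd$-occupancy among all best responses,'' and hence $\cP_{\mdet}$ is upward closed, containing $x^{\sharp}+\delta$ for all $\delta\succeq0$. But maximizing the \emph{total} occupancy $\sum_{s\in\Sd}\sum_{a}m(s,a)$ does not give state-by-state dominance, and the perturbation term $\sum_{s\in\Sd}\delta(s)\bigl(\sum_{a}\mdet(s,a)-\sum_{a}m'(s,a)\bigr)$ can be negative for $\delta\succeq0$. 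Concretely, if two deterministic best responses $m_{A},m_{B}$ at $x^{\sharp}$ both achieve $v_{1}^{\star}$ but concentrate their occupancy on different states $s_{1},s_{2}\in\Sd$, then increasing the allocation at $s_{2}$ alone ejects $x^{\sharp}+\delta$ from $\cP_{m_{A}}$. So the box $\{x^{\sharp}+\delta:0\preceq\delta\preceq\tfrac{\eta}{|\Sd|}\mathbf{1}\}$ is not contained in $\cP_{\mdet}$; worse, its center lies exactly on the common boundary of $\cP_{m_{A}}$ and $\cP_{m_{B}}$, because a uniform bump of size $c$ adds $c\,\iprod{r_{1}}{m}$ to every follower payoff and therefore cannot separate two leader-optimal best responses. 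In that situation your proposed allocation is not an interior point of any optimal allocation region, which is precisely the case the theorem must handle.

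The paper closes this gap with a dichotomy your proposal lacks. The uniform bump does correctly eliminate every deterministic best response that is not leader-optimal (and every deterministic non-best-response, using the gap $b$), exactly because the bonus is proportional to $\iprod{r_{1}}{m}$. But among the surviving leader-optimal deterministic best responses the paper must distinguish Case 1, where all of them have identical marginals $\sum_{a}m(s,a)$ at every $s\in\Sd$ --- only then is the tie unbreakable by perturbations supported on $\Sd$, and the bumped point is genuinely interior --- from Case 2, where some marginal differs at a state $s^{\star}$. In Case 2 one must perturb further by \emph{decreasing} the allocation at $s^{\star}$, which makes the occupancy measure with the smallest marginal at $s^{\star}$ the \emph{unique} deterministic best response, and a separate lemma (uniqueness among $\Mdet$ implies an interior point) finishes. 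Your argument would need both this case analysis and the downward tie-breaking step; neither follows from monotonicity, which is simply not available here.
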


\begin{proof}
See Appendix~\ref{subsec:Proof-of-Theorem_Sufficiency}.
\end{proof}

\subsection{Necessity for robustness to nonunique best responses}

Example~\ref{exa:budget} also hints at a relationship between the existence of an optimal interior-point allocation and robustness to nonunique best responses (introduced in Section~\ref{subsec:robustness_nonunique}). When $C=1$, under the only optimal allocation $x=1$, the best responses of the follower is not unique: It is optimal for the follower to choose either Path 1 or Path 2. However, the former will give the leader a payoff of $1$, whereas the latter will give a payoff of $0$. In other words, an optimal allocation that is robust to nonunique best responses does not exist. Such nonexistence happens to coincide with the nonexistence of optimal interior-point allocations.

In fact, this is more than a coincidence. The existence of an optimal interior-point allocation is a sufficient and necessary condition for the existence of an allocation robust to nonunique best responses. Notice that the sufficiency has already been established in Proposition~\ref{prop:robustness_nonunique}. The following theorem focuses only on the necessity.
\begin{thm}[Necessity of optimal interior-point allocations]
\label{prop:best responses to existence}If there exists an optimal allocation $\xopt$ satisfying $\optival(\xopt)=\pessval(\xopt)=v_{1}^{\star}$, then an optimal interior-point allocation must exist.
\end{thm}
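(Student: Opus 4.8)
The plan is to prove the contrapositive: assuming that \emph{no} optimal interior-point allocation exists, I will construct an optimal allocation whose pessimistic value is strictly smaller than $v_1^\star$, contradicting the hypothesis that $\optival(\xopt)=\pessval(\xopt)=v_1^\star$ for some optimal $\xopt$. By Theorem~\ref{Thm:Sufficiency}, the nonexistence of an optimal interior-point allocation is equivalent to the statement that the optimal value of problem~(\ref{eq:margin problem}) equals $0$, i.e.\ \emph{every} optimal allocation exhausts the budget: $\sum_i x_i = C$ whenever $(x,m)$ is feasible for~(\ref{eq:allocation_problem_om}) with $\iprod{r_1}{m}=v_1^\star$. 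This budget-saturation property is the engine of the argument.

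First I would take an arbitrary optimal allocation $\xopt$ (which exists by the hypothesis) together with an optimal occupancy measure $m^\star \in \BR(\xopt)$ with $\iprod{r_1}{m^\star}=v_1^\star$; without loss of generality, using Proposition~\ref{prop:all determinisitc m are optimal} and Theorem~\ref{thm:deterministic region contains non}, I may replace $m^\star$ by a \emph{deterministic} optimal occupancy measure $m\in\Mdet(\pi^\star)$ so that $\xopt\in\cP_m$ and $\cP_m$ is an optimal allocation region. Since no optimal interior-point allocation exists, $\xopt$ lies on the boundary of $\cP_m$ (and of every optimal allocation region containing it). The key geometric step is to use the structure of $\cP_m$: it is a polyhedron defined by the linear inequalities $\iprod{r_2^{x}}{m}\ge\iprod{r_2^{x}}{m'}$ over $m'\in\cM$ (finitely many suffice, one per vertex of $\cM$), i.e.\ $\iprod{x}{\,(\text{marginal of }m) - (\text{marginal of }m')\,}\ge \iprod{r}{m'-m}$. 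Because $\xopt$ is not interior and cannot be made interior while staying optimal (by budget saturation), I want to show that moving $\xopt$ \emph{inside} $\cP_m$ necessarily violates feasibility, i.e.\ the only directions that keep $x$ in $\cP_m$ and in $\cX$ also keep $x$ on the boundary of some competing region $\cP_{m''}$ with $\iprod{r_1}{m''}<v_1^\star$.

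The cleanest route I foresee is a separation/LP-duality argument. Consider the direction $\mathbf{1}$ (all-ones): as computed in the text after Proposition~\ref{prop:robustness_nonunique}, decreasing $x$ along $-\mathbf{1}$ changes $r_2^x$ by $r_2^{x}-\epsilon r_1$, which separates occupancy measures by their leader-value. So if $\xopt-\epsilon\mathbf{1}\in\cP_m\cap\cX$ for small $\epsilon>0$, then $m$ remains the unique best response among measures of strictly smaller leader value, and by perturbing further one can reach an interior point of $\cP_m$ that is still optimal — contradicting budget saturation is \emph{not} immediate, so instead I use: $\xopt-\epsilon\mathbf 1\in\cX$ automatically (it only decreases the sum), hence the obstruction to $\xopt$ being interior must come entirely from the region constraints, i.e.\ there is $m_2\in\BR(\xopt)$ with $\iprod{r_1}{m_2}<v_1^\star = \iprod{r_1}{m}$ that becomes the \emph{strictly better} response under $r_2^{\xopt-\epsilon\mathbf 1}$ for all $\epsilon>0$ — but this is exactly the configuration showing $m$ is on the boundary, and it also shows $m_2 \in \BR(\xopt)$ realizes a leader payoff below $v_1^\star$, so $\pessval(\xopt)<v_1^\star$. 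Running this for \emph{every} optimal $\xopt$ (each must be budget-saturated, each must be non-interior in every optimal region it meets) yields that no optimal allocation is robust, which is the contrapositive.

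The main obstacle is the third paragraph: ruling out the possibility that $\xopt$ sits on the boundary of $\cP_m$ only because of the \emph{budget hyperplane} $\sum_i x_i = C$ and not because of any competing region — in that case there would be no adverse $m_2$ and the argument breaks. This is precisely where Theorem~\ref{Thm:Sufficiency} (equivalently, the nonexistence hypothesis) must be invoked correctly: nonexistence of an optimal interior-point allocation does \emph{not} merely say every optimal $\xopt$ lies on $\partial\cX$; it says every optimal $\xopt$ fails the interior-point condition~(\ref{eq:interior_pt_condition}) relative to $\cP_m\cap\cX$ is not the right reading either — rather, by the proof of Theorem~\ref{Thm:Sufficiency}, it says that the $-\mathbf 1$ direction (budget-reducing) leaves every optimal region, so there is always an adverse best response strictly worse for the leader. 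I expect the bulk of the work to be a careful bookkeeping of which linear constraints are tight at $\xopt$, combined with the already-established fact (displayed after Proposition~\ref{prop:robustness_nonunique}) that $x(\epsilon)=\xopt-\epsilon\mathbf 1\notin\cP_{m_1}$ whenever an adverse $m_2\in\BR(\xopt)$ exists; I would lift this local statement to a clean global contradiction using convexity of $\cP_m$ and finiteness of $\Mdet$.
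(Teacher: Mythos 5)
There is a genuine gap in your argument, and it sits exactly where you flagged your ``main obstacle.'' Your contrapositive hinges on the dichotomy: either $\xopt$ is an interior point of some optimal allocation region, or there exists an adverse best response $m_2\in\BR(\xopt)$ with $\iprod{r_1}{m_2}<v_1^{\star}$. This dichotomy is false. An optimal allocation can fail to be interior because two best responses $m,m'$ with the \emph{same} leader value $v_1^{\star}$ have \emph{different} marginals $\sum_{a}m(s,a)\neq\sum_{a}m'(s,a)$ at some $s\in\Sd$; then every perturbation of $\xopt$ breaks the tie between $m$ and $m'$, so $\xopt$ lies on the boundary of both $\cP_m$ and $\cP_{m'}$, yet neither response is adverse and $\pessval(\xopt)=v_1^{\star}$ still holds. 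The paper points out precisely this configuration in the discussion following the theorem (any $\xopt\in\cX_1$ in Example~\ref{exa: deterministic occupancy measure} is robust but not interior to any optimal region). Consequently the remark after Proposition~\ref{prop:robustness_nonunique} about the direction $-\mathbf{1}$, which you lean on, only covers the situation where an adverse $m_2$ already exists; it cannot manufacture one.

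The paper's proof instead runs a direct case analysis on $\BRdet^{\star}=\Mdet\cap\BR(\xopt)$. If all members of $\BRdet^{\star}$ share the same marginal on every $s\in\Sd$, a perturbation bound (using the gap $b$ to the best non-optimal deterministic response and Lemma~\ref{lem: sum_of_oc}) shows $\xopt$ is already an interior point of $\cP_{\mopt}$, and you are done. Otherwise, pick $s_0\in\Sd$ where the marginals differ, take $m_2\in\BRdet^{\star}$ with the \emph{minimal} marginal at $s_0$, and \emph{decrease} the allocation at $s_0$ alone by a small $c>0$: inequalities \eqref{eq:perturbed x and m in M_det}--\eqref{eq:perturbed x for m in M=00005CM_det} show $m_2$ remains a best response of the new allocation $x'$, which by the robustness hypothesis still attains $v_1^{\star}$ and now leaves budget slack $c>0$. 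This exhibits a feasible point of \eqref{eq:margin problem} with strictly positive value, and Theorem~\ref{Thm:Sufficiency} then yields existence. The key move you are missing is this single-coordinate, budget-saving perturbation toward the best response with smallest $\Sd$-marginal --- not a search for an adverse response.
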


\begin{proof}
See Appendix~\ref{subsec:Proof-of-Theorem_best_response_to_existence}.
\end{proof}
Theorem~\ref{prop:best responses to existence} does not, however, guarantee that $\xopt$ is an optimal interior-point allocation when $\xopt$ satisfies $\optival(\xopt)=\pessval(\xopt)=v_{1}^{\star}$. For instance, consider any allocation $\xopt\in\cX_{1}$ in Example~\ref{exa: deterministic occupancy measure}. Due to Theorem~\ref{thm:deterministic region contains non}, only optimal allocation regions of deterministic occupancy measures need to be examined. The allocation $\xopt$ belongs to three optimal allocation regions of deterministic occupancy measures: $\cP_{m_{\mathrm{r}}}$, $\cP_{m_{\mathrm{u}}}$, and $\cP_{m_{\mathrm{d}}}$. It is not difficult to see that $\xopt$ is robust to nonunique best responses. However, $\xopt$ is not an interior point of any optimal allocation region. First, $\xopt$ cannot be an interior point of $\cP_{m_{\mathrm{r}}}$ because Example~\ref{exa: deterministic occupancy measure} has already shown that $\cP_{m_{\mathrm{r}}}$ has an empty interior. Second, $\xopt$ is not in the interior of $\cP_{m_{\mathrm{u}}}$ or $\cP_{m_{\mathrm{d}}}$ since it is possible to perturb $\xopt$ arbitrarily small to make $m_{\mathrm{u}}$ or $m_{\mathrm{d}}$ no longer a best response of the follower.

\section{Computing an Optimal Interior-Point Allocation\label{sec:Computation-of-Chbyshev}}

For finding an optimal interior-point allocation, Theorem~\ref{thm:deterministic region contains non} shows that only a finite number of optimal allocation regions (i.e., the ones of deterministic occupancy measures) need to be examined. Nevertheless, the number of regions can be as large as the number of deterministic policies of the follower, which is given by $|\cA|^{|\cS|}$. Thus, it may be impractical to enumerate all the allocation regions of deterministic occupancy measures. This section presents a practical method for computing an optimal interior-point allocation via MILP. 

Suppose that the optimal value $v_{1}^{\star}$ of problem~(\ref{eq:allocation_problem_om}) has been obtained using the procedure described in Section~\ref{subsec:Computing-an-optimal}. Our goal is to find $(x,m)$ that satisfies the following conditions: 
\begin{enumerate}
\item $x$ is an admissible allocation: $x\in\cX$.
\item $m$ is an optimal occupancy measure: $m\in\cM$, and $\iprod{r_{1}}{m}=v_{1}^{\star}$.
\item $x$ is an interior point of $\cP_{m}$: There exists $c>0$ such that (\ref{eq:interior_pt_condition}) holds.
\end{enumerate}
An issue with condition~(\ref{eq:interior_pt_condition}) is that the condition would lead to infinitely many constraints because it requires checking every $v$ satisfying $\norm{v}_{1}\leq1$. Denote by $\{e_{i}\}_{i=1}^{|\Sd|}$ the standard basis of $\reals^{|\Sd|}$. Recall that the unit $\ell_{1}$-norm ball in $\reals^{|\Sd|}$ is the convex hull of $\{\pm e_{i}\}_{i=1}^{|\Sd|}$. Since $\cP_{m}$ is a convex set, condition~(\ref{eq:interior_pt_condition}) is equivalent to

\begin{equation}
x+ce_{i}\in\cP_{m},\quad x-ce_{i}\in\cP_{m},\quad i=1,\dots,|\Sd|\label{eq:1-norm transformation}
\end{equation}
or, equivalently,
\begin{equation}
m\in\BR(x+ce_{i}),\quad m\in\BR(x-ce_{i}),\quad i=1,\dots,|\mathcal{S}_{d}|.\label{eq:br}
\end{equation}
According to the KKT conditions in~(\ref{eq:KKT}), condition~(\ref{eq:br}) holds if and only if  there exist $\nu_{i}^{+}$ and $\nu_{i}^{-}$ for $i=1,\dots,|\mathcal{S}_{d}|$ such that 
\begin{align}
 & Am=\rho,\qquad m\succeq0\label{eq:primal_feasibility}\\
 & A^{T}\nu_{i}^{+}-r_{2}^{x+ce_{i}}\succeq0,\quad A^{T}\nu_{i}^{-}-r_{2}^{x-ce_{i}}\succeq0,\quad i=1,\dots,|\mathcal{S}_{d}|\label{eq:dual_feasibility}\\
 & m\perp A^{T}\nu_{i}^{+}-r_{2}^{x+ce_{i}},\quad m\perp A^{T}\nu_{i}^{-}-r_{2}^{x-ce_{i}},\quad i=1,\dots,|\mathcal{S}_{d}|.\label{eq:comp_constr}
\end{align}

Rather than an arbitrary optimal interior-point allocation, it is actually possible to find an optimal interior-point allocation with the \emph{maximum} margin by solving the following optimization problem: 
\begin{equation}
\begin{alignedat}{2} & \optmax_{x,m,c,\nu_{i}^{+},\nu_{i}^{-}} & \quad & c\\
 & \optst &  & x\in\mathcal{X},\quad\iprod{r_{1}}{m}=v_{1}^{\star},\quad\textrm{\eqref{eq:primal_feasibility}--\eqref{eq:comp_constr}}.
\end{alignedat}
\label{eq:prob_cc}
\end{equation}
Let $(x^{\star},m^{\star},c^{\star})$ be an optimal solution of problem~(\ref{eq:prob_cc}). If $c^{\star}>0$, then $x^{\star}$ is an interior point of the optimal allocation region $\mathcal{P}_{m^{\star}}$ and hence is an optimal interior-point allocation with margin $c^{\star}$.

Similar to (\ref{eq:MILP_c}), the complementarity constraints in (\ref{eq:comp_constr}) can be reformulated as affine constraints with integer variables. Since all the remaining constraints in (\ref{eq:prob_cc}) are affine, problem (\ref{eq:prob_cc}) can be reformulated as an MILP and solved by off-the-shelf solvers including Gurobi~\citep{gurobi} and CPLEX~\citep{CPLEX}.

\section{Interior-Point Reward Functions Are Robust\label{sec:Interior-Point-Reward-Functions}}

\subsection{Optimal interior-point reward functions\label{subsec:oipa-incentive}}

In Section~\ref{sec:ipa_robust}, we introduced the concept of an optimal interior-point allocation and proved its robustness when the leader's reward function is in a special form given in Section~\ref{subsec:prelim-setup}. In this section, we will extend the definition of the optimal interior-point allocation in order to study robust reward design when the leader uses a general reward function. When $r_{1}$ is chosen arbitrarily, the following example shows that an interior-point allocation may no longer offer robustness to nonunique best responses, let alone the other two stronger notions of robustness. 
\begin{example}
Consider a $4\times4$ grid world in Figure~\ref{fig:counter_example}. The follower starts from $(1,2)$. The action space of the follower and the transition kernel are the same as Example~\ref{exa:opti_pess_values}. We assume a discount factor of $1$. There is no true goal for the follower. The leader is interested in the state $(3,3)$: When the follower enters $(3,3)$, the leader receives a payoff of $1$. The leader's reward is $0$ in all other states. The reward allocation is only allowed at $(4,2)$, and the allocation budget is $1$. Once the follower enters $(4,2)$, the follower cannot leave the state. 

Suppose a positive reward is allocated at $(4,2)$. In this case, both Path 1 and Path 2 lead the follower to $(4,2)$, the only state that the follower can get rewarded. Denote the reward allocation by $x$. Denote the occupancy measures induced by following Path 1 and Path 2 by $m_{1}$ and $m_{2}$, respectively. Then, $\cP_{m_{1}}=\cP_{m_{2}}=\{x\mid0<x\leq1\}$, and both $\cP_{m_{1}}$ and $\cP_{m_{2}}$ are optimal allocation regions. Consider an optimal interior-point allocation, for instance, $x=0.5$. When $x=0.5$, the follower is indifferent to taking Path 1 or Path 2. However, the leader's payoff will be $1$ if the follower chooses Path 1 and $0$ otherwise. This means $\pessval(x)=0\neq\optival(x)=1$ when $x$ is an optimal interior-point allocation. The example, however, does not contradict with Proposition~\ref{prop:robustness_nonunique} because $r_{1}$ differs from the one defined in Section~\ref{subsec:prelim-setup}.

\begin{figure}
\begin{centering}
\includegraphics[width=0.5\textwidth]{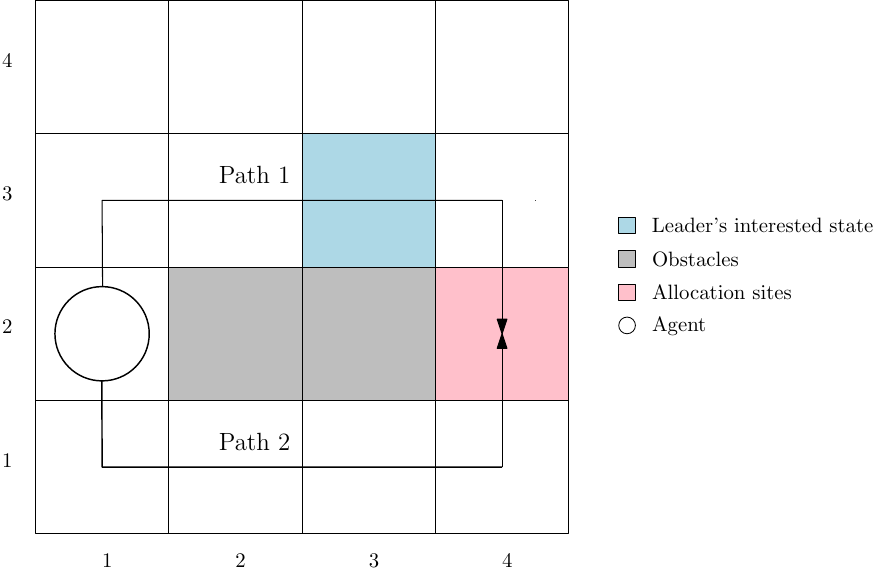}
\par\end{centering}
\caption{The reward design problem with a general leader's reward function. The leader is interested in the state $(3,3)$, and the leader's reward is $1$ in this state and $0$ in all other states. The reward allocation is only admissible at $(4,2)$. There is no true goal for the follower.\label{fig:counter_example}}

\end{figure}
\end{example}

We first examine how the induced occupancy measure $m$ of the follower is affected by its reward function. For a given follower's reward function $\mr\in\mathbb{R}^{\vert\mathcal{S}\vert\vert\mathcal{A}\vert}$, define the set of best responses of $\mr$ by
\[
\BR(\mr)\deq\argmax_{m\in \mathcal{M}}\iprod{\mr}{m}.
\]
It is possible to divide the space $\mathbb{R}^{\vert\mathcal{S}\vert\vert\mathcal{A}\vert}$ of reward functions into (possibly overlapping) regions based on the corresponding best response. Each region is called a \emph{reward region} and is identified by a unique occupancy measure.
\begin{defn}[Reward region]
\label{def:allocation_region-1}Let $m\in\cM$ be an occupancy measure. The \emph{reward region} of $m$ is defined by
\begin{equation*}
\mathcal{\tilde{P}}_{m}=\{\mr\in\mathbb{R}^{|\mathcal{S}||\mathcal{A}|}\mid m\in\BR(\mr)\}=\{\mr\in\mathbb{R}^{|\mathcal{S}||\mathcal{A}|}\mid\iprod{\mr}{m}\geq\iprod{\mr}{m'}\text{ for all }m'\in\mathcal{M}\}.
\end{equation*}
Furthermore, the reward region of an optimal occupancy measure of the reward design problem~(\ref{eq:allocation_problem_om}) is called an \emph{optimal reward region}. 
\end{defn}

The name reward region stands for the fact that the region consists of reward functions. Similarly to Definition~\ref{def:interior-point}, it is possible to define \emph{interior-point reward functions} and \emph{optimal interior-point reward functions}.
\begin{defn}[Interior-point reward function]
\label{def:interior-point-1}Let $m\in\cM$ be an occupancy measure. A reward function $\mr\in\mathbb{R}^{\vert\mathcal{S}\vert\vert\mathcal{A}\vert}$ is called an\emph{ interior-point reward function of $\tilde{\cP}_{m}$} if there exists $\tilde{c}>0$ such that
\begin{equation}
\mr+\tilde{c}v\in\tilde{\cP}_{m}\quad\text{for all }\norm{v}_{1}\leq1.\label{eq:interior_pt_condition-1}
\end{equation}
The largest value of $\tilde{c}$ for (\ref{eq:interior_pt_condition-1}) to hold is called the \emph{reward margin} of $\mr$. An interior-point of an optimal reward region is called an \emph{optimal interior-point reward}\textit{ function}.
\end{defn}

\subsection{Robustness of \textit{\emph{optimal interior-point reward functions}}}

The properties of optimal interior-point reward functions are similar to those of optimal interior-point allocations shown in Section~\ref{sec:ipa_robust}. Previously, we defined $\optival(x)$ and $\pessval(x)$ as the optimal value of (\ref{eq:optimistic problem}) and (\ref{eq:pessimistic problem}), respectively. Given a reward function $\mr$, we abuse the notation and similarly define $\optival(\mr)$ as the optimal value of
\begin{equation*}
\begin{alignedat}{2} & \optmax_{m\in\mathcal{M}} & \quad & \iprod{r_{1}}{m}\\
 & \optst &  & m\in\BR(\mr),
\end{alignedat}
\end{equation*}
and $\pessval(\mr)$ as the optimal value of 
\begin{equation*}
\begin{alignedat}{2} & \optmin_{m\in\mathcal{M}} & \quad & \iprod{r_{1}}{m}\\
 & \optst &  & m\in\BR(\mr).
\end{alignedat}
\end{equation*}
Denote the interior of a reward region $\tildep$ by $\mathrm{int}(\tildep)$.

\begin{prop}
\label{prop:unique best response}If $\mr\in\mathrm{int}(\tildep_{m})$ for some $m$, then $\BR(\mr)=\{m\}$.
\end{prop}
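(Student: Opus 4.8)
The plan is to argue by contradiction: suppose $\mr \in \mathrm{int}(\tildep_m)$ but $\BR(\mr) \neq \{m\}$. Since $\mr \in \tildep_m$ always gives $m \in \BR(\mr)$, the negation means there exists some $m' \in \BR(\mr)$ with $m' \neq m$. Both $m$ and $m'$ are maximizers of $\iprod{\mr}{\cdot}$ over $\cM$, so $\iprod{\mr}{m} = \iprod{\mr}{m'}$. The goal is to construct a small perturbation direction that strictly separates the two, pushing $\mr$ out of $\tildep_m$ in every neighborhood, contradicting the interior-point hypothesis.

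First I would pick the perturbation direction $v \deq m - m'$ (rescaled to unit norm, which does not matter). Since $m \neq m'$, we have $v \neq 0$. For any $t > 0$, consider $\mr_t \deq \mr + t v$. Then
\begin{equation*}
\iprod{\mr_t}{m} - \iprod{\mr_t}{m'} = \iprod{\mr}{m} - \iprod{\mr}{m'} + t \iprod{v}{m - m'} = 0 + t \norm{m - m'}_2^2 > 0,
\end{equation*}
so moving in the $+v$ direction keeps $m$ strictly better than $m'$. The useful direction is actually $-v = m' - m$: along $\mr_{-t} = \mr - tv$ we get $\iprod{\mr_{-t}}{m} - \iprod{\mr_{-t}}{m'} = -t\norm{m-m'}_2^2 < 0$ for every $t>0$, so $m' $ is strictly preferred to $m$ under $\mr_{-t}$, which forces $m \notin \BR(\mr_{-t})$, i.e. $\mr_{-t} \notin \tildep_m$. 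Taking $t \downarrow 0$ shows that no neighborhood of $\mr$ lies inside $\tildep_m$, contradicting $\mr \in \mathrm{int}(\tildep_m)$.

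I would then address the only subtlety: translating between the ``$\exists\, \tilde c>0$ such that $\mr + \tilde c v \in \tildep_m$ for all $\norm{v}_1 \le 1$'' formulation of Definition~\ref{def:interior-point-1} and the topological notion of interior used in the statement. These coincide because $\tildep_m$ is convex (being an intersection of the halfspaces $\{\mr : \iprod{\mr}{m - m'} \ge 0\}$ over $m' \in \cM$) and a convex set's algebraic interior in a finite-dimensional space equals its topological interior; alternatively one can just directly observe that $\mr \in \mathrm{int}(\tildep_m)$ in the usual sense means some $\norm{\cdot}_1$-ball around $\mr$ is contained in $\tildep_m$, and the point $\mr - tv$ with small $t$ lies in that ball while failing membership. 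Either way the contradiction is immediate.

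I do not expect any real obstacle here — the argument is the standard ``strict separation kills boundary ties'' fact, essentially the same mechanism used informally after Proposition~\ref{prop:robustness_nonunique} (with the direction $x(\epsilon) = x^\star - \epsilon \mathbf{1}$), specialized to the reward-function space and with the perturbation $m - m'$ replacing the all-ones vector. If anything needs care, it is only making sure the perturbation direction $m - m'$ is a legitimate vector in $\reals^{|\cS||\cA|}$ (it is, since $\cM \subseteq \reals^{|\cS||\cA|}$) so that $\mr \pm t(m-m')$ are valid reward functions; no admissibility/budget constraints restrict $\mr$ here, unlike the allocation setting, so this is unconditional.
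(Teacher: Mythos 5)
Your proof is correct and rests on the same underlying mechanism as the paper's: exploiting the interior hypothesis by perturbing $\mr$ within the putative neighborhood in a direction that separates $m$ from a competing maximizer $m'$. The only difference is cosmetic --- the paper argues directly, perturbing along the coordinate directions $\pm e_i$ and combining the two resulting inequalities to conclude $\iprod{\mr}{m} > \iprod{\mr}{m'}$ outright, whereas you argue by contradiction along the single tailored direction $m - m'$; both are valid, of comparable length, and your handling of the algebraic-versus-topological interior subtlety is also fine.
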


\begin{proof}
Since $\mr\in\mathrm{int}(\tildep_{m})$, for any standard basis vector $e_{i}$ ($i=1,\dots,|\cS|\cdot|\cA|$) of $\mathbb{R}^{\vert\mathcal{S}\vert\cdot\vert\mathcal{A}\vert}$, there exists some $c>0$ such that $\mr+ce_{i},\mr-ce_{i}\in\tildep_{m}$. Therefore, given any $m'\in\mathcal{M}$, it holds that $\langle\mr+ce_{i},m\rangle\geq\langle\mr+ce_{i},m'\rangle$ or equivalently 
\begin{equation}
\langle\mr,m\rangle\geq\langle\mr,m'\rangle+\langle ce_{i},m'-m\rangle.\label{eq:m_diff-1}
\end{equation}
Similarly, it holds that $\langle\mr-ce_{i},m\rangle\geq\langle\mr-ce_{i},m'\rangle$ or equivalently 
\begin{equation}
\langle\mr,m\rangle\geq\langle\mr,m'\rangle+\langle ce_{i},m-m'\rangle.\label{eq:m_diff-2}
\end{equation}
Combine~(\ref{eq:m_diff-1}) and (\ref{eq:m_diff-2}) to obtain 
\begin{equation}
\langle\mr,m\rangle\geq\langle\mr,m'\rangle+|\langle ce_{i},m-m'\rangle|.\label{eq:m_diff-3}
\end{equation}
When $m'\neq m$, there must exist $(s,a)$ such that $m(s,a)\neq m'(s,a)$. Thus, there exists some $i\in\{1,2,\ldots,|\mathcal{S}|\cdot|\mathcal{A}|\}$ such that $\langle e_{i},m-m'\rangle=m(s,a)-m'(s,a)\neq0$. It then follows from~(\ref{eq:m_diff-3}) that $\langle\mr,m\rangle>\langle\mr,m'\rangle$. 
\end{proof}
If $\mr$ is an optimal interior-point reward function, then $\optival(\mr)=v_{1}^{\star}$. Furthermore, Proposition~\ref{prop:unique best response} shows that the best response of $\mr$ is unique, which immediately implies $\optival(\mr)=\pessval(\mr)=v_{1}^{\star}$. 
\begin{cor}[Robustness to nonunique best responses]
\label{coro:robustness_nonunique}For a reward allocation $x$, if $r_{2}^{x}$ is an optimal interior-point reward function, then $\optival(r_{2}^{x})=\pessval(r_{2}^{x})=v_{1}^{\star}$.
\end{cor}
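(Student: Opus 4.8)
The plan is to deduce Corollary~\ref{coro:robustness_nonunique} directly from Proposition~\ref{prop:unique best response} together with the defining properties of an optimal interior-point reward function. The key observation is that the corollary is essentially just an instantiation: the hypothesis says $r_2^x$ is an optimal interior-point reward function, which by Definition~\ref{def:interior-point-1} means $r_2^x \in \intr(\tildep_m)$ for some occupancy measure $m$, and moreover (since it lies in an \emph{optimal} reward region) that $m$ is an optimal occupancy measure of the reward design problem~(\ref{eq:allocation_problem_om}), so $\iprod{r_1}{m} = v_1^\star$.

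First I would unpack the hypothesis: being an optimal interior-point reward function means there is some $m \in \cM$ with $r_2^x \in \intr(\tildep_m)$ and $\iprod{r_1}{m} = v_1^\star$ (the latter because $m$ is an optimal occupancy measure, so $\cP^\star_m$ is an optimal reward region). Second, I would invoke Proposition~\ref{prop:unique best response} with $\mr = r_2^x$ to conclude $\BR(r_2^x) = \{m\}$. Third, since the best response is the single occupancy measure $m$, both the maximization in the definition of $\optival(r_2^x)$ and the minimization in the definition of $\pessval(r_2^x)$ range over the singleton set $\{m\}$, so $\optival(r_2^x) = \pessval(r_2^x) = \iprod{r_1}{m} = v_1^\star$.

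I do not anticipate a genuine obstacle here; the only point requiring a small amount of care is making explicit why $m$ satisfies $\iprod{r_1}{m} = v_1^\star$ — this is exactly what it means for $\tildep_m$ to be an \emph{optimal} reward region in the sense of Definition~\ref{def:allocation_region-1}, namely that $m$ is an optimal occupancy measure of~(\ref{eq:allocation_problem_om}), and by definition of $v_1^\star$ any optimal occupancy measure $m$ attains $\iprod{r_1}{m} = v_1^\star$. With that bookkeeping in place, the chain $\optival(r_2^x) = \pessval(r_2^x) = v_1^\star$ follows immediately from the uniqueness of the best response. The proof is therefore short: state the hypothesis in terms of $\intr(\tildep_m)$, apply Proposition~\ref{prop:unique best response}, and evaluate both optimization problems over the singleton $\BR(r_2^x) = \{m\}$.
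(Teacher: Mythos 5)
Your proposal is correct and follows exactly the paper's argument: the paper likewise obtains the corollary by noting that an optimal interior-point reward function lies in $\mathrm{int}(\tilde{\cP}_{m})$ for an optimal occupancy measure $m$ (so $\optival(r_{2}^{x})=v_{1}^{\star}$), and then invoking Proposition~\ref{prop:unique best response} to get $\BR(r_{2}^{x})=\{m\}$, which forces $\optival(r_{2}^{x})=\pessval(r_{2}^{x})=v_{1}^{\star}$. Your extra bookkeeping about why $\iprod{r_{1}}{m}=v_{1}^{\star}$ is a fair elaboration of what the paper leaves implicit.
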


Using the notion of optimal interior-point reward functions, Propositions~\ref{prop:robustness_nonunique} and~\ref{prop:robustness of ent-regularized-1} can also be extended to the case of general $r_{1}$. 
\begin{prop}[Robustness to reward perception of the follower]
\label{prop:robustness_reward-1}For a reward allocation $x$, if $r_{2}^{x}$ is an optimal interior-point reward function with margin $\tilde{c}>0$, then $\optival(r_{2}^{x}+\delta)=\pessval(r_{2}^{x}+\delta)=v_{1}^{\star}$ for all $\norm{\delta}_{1}<\tilde{c}$.
\end{prop}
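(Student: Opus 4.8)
The plan is to reduce Proposition~\ref{prop:robustness_reward-1} to Corollary~\ref{coro:robustness_nonunique} exactly as Proposition~\ref{prop:robustness_reward} was reduced to Proposition~\ref{prop:robustness_nonunique}. The key observation is that being an interior-point reward function of $\tildep_m$ with reward margin $\tilde c$ means, by Definition~\ref{def:interior-point-1}, that $r_2^x + \tilde c v \in \tildep_m$ for all $\norm{v}_1 \le 1$; since $\tildep_m$ is convex (it is an intersection of half-spaces indexed by $m' \in \cM$), the perturbed point $r_2^x + \delta$ lies in the \emph{interior} of $\tildep_m$ whenever $\norm{\delta}_1 < \tilde c$.

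First I would note that since $r_2^x$ is an \emph{optimal} interior-point reward function, $m$ is an optimal occupancy measure, i.e., $\iprod{r_1}{m} = v_1^\star$, and $\tildep_m$ is an optimal reward region. Then I would argue that for any $\delta$ with $\norm{\delta}_1 < \tilde c$, the reward function $r_2^x + \delta$ is an interior point of $\tildep_m$: writing $\delta = \norm{\delta}_1 \cdot (\delta/\norm{\delta}_1)$ when $\delta \ne 0$ and choosing a small enough $c' > 0$ with $\norm{\delta}_1 + c' \le \tilde c$, every point $r_2^x + \delta + c' v$ with $\norm{v}_1 \le 1$ can be written as a convex combination of points of the form $r_2^x + \tilde c v'$ (with $\norm{v'}_1 \le 1$), hence lies in $\tildep_m$ by convexity. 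Therefore $r_2^x + \delta \in \intr(\tildep_m)$, and $\tildep_m$ is still an optimal reward region, so $r_2^x + \delta$ is an optimal interior-point reward function. Applying Corollary~\ref{coro:robustness_nonunique} with the reward function $r_2^x + \delta$ in place of $r_2^x$ then yields $\optival(r_2^x + \delta) = \pessval(r_2^x + \delta) = v_1^\star$.

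Strictly speaking Corollary~\ref{coro:robustness_nonunique} is phrased for reward functions of the form $r_2^x$, but its proof only uses that the argument is an optimal interior-point reward function; combining Proposition~\ref{prop:unique best response} (which gives $\BR(r_2^x+\delta) = \{m\}$ from $r_2^x+\delta \in \intr(\tildep_m)$) with $\iprod{r_1}{m} = v_1^\star$ directly gives the conclusion, so I would just invoke Proposition~\ref{prop:unique best response} to be safe. The only mild obstacle is the convexity bookkeeping showing that an interior-point reward function remains interior after a perturbation of $\ell_1$-norm strictly less than the margin; this is the same elementary argument used in the proof of Proposition~\ref{prop:robustness_reward} and poses no real difficulty. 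I would keep the write-up to a few lines, mirroring that earlier proof.
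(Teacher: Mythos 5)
Your proposal is correct and follows essentially the same route as the paper: show that $r_2^x+\delta$ remains in the interior of the optimal reward region $\tildep_m$ when $\norm{\delta}_1<\tilde c$, then conclude via the uniqueness of the best response at interior points (Proposition~\ref{prop:unique best response}) together with $\iprod{r_1}{m}=v_1^{\star}$. Your remark that one should invoke Proposition~\ref{prop:unique best response} directly rather than Corollary~\ref{coro:robustness_nonunique} (since $r_2^x+\delta$ need not itself have the form $r_2^{x'}$) is a sensible precaution and does not change the substance of the argument.
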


\begin{proof}
Similar to the proof of Proposition~\ref{prop:robustness_nonunique}.
\end{proof}
\begin{prop}[Robustness to bounded rationality]
\label{prop:Robustness to ent-regularized-1}Suppose that $\iprod{r_{1}}{\mopt}=v_{1}^{\star}$ and $r_{2}^{\xopt}$ is an interior-point reward function of $\tilde{\mathcal{P}}_{m^{\star}}$. Denote by $\Mdet^{\star}\deq\Mdet\cap\BR(r_{2}^{\xopt})$ the set of deterministic optimal occupancy measures under $r_{2}^{\xopt}$. Then for any $\tau>0$, it holds that
\[
\iprod{r_{1}}{m_{\tau}^{\star}}\geq\left(1-\frac{2\tau}{b(1-\gamma)}\log\vert\mathcal{A}\vert\right)v_{1}^{\star},
\]
where $m_{\tau}^{\star}$ is given by \eqref{eq:def_m_tau_star}, $\gamma$ is the discount factor, and $b=\langle r_{2}^{\xopt},m^{\star}\rangle-\max_{m\in\Mdet\setminus\Mdet^{\star}}\langle r_{2}^{\xopt},m\rangle$.
\end{prop}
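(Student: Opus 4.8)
The plan is to follow the proof of Proposition~\ref{prop:robustness of ent-regularized-1} almost verbatim, replacing the allocation region $\cP_{m^\star}$ by the reward region $\tildep_{m^\star}$ and using Proposition~\ref{prop:unique best response} as the structural input. Write $H(m)\deq\sum_{(s,a)\in\cS\times\cA}m(s,a)\log\frac{m(s,a)}{\sum_{a'\in\cA}m(s,a')}$ for the regularizer appearing in~\eqref{eq:def_m_tau_star}, so that $m_{\tau}^{\star}=\argmax_{m\in\cM}\{\iprod{r_2^{\xopt}}{m}-\tau H(m)\}$. The first step is a uniform bound on this regularizer: writing $d_m(s)=\sum_{a\in\cA}m(s,a)$ and letting $\pi_m$ be any policy that induces $m$, one has $-H(m)=\sum_{s\in\cS}d_m(s)\,\ent(\pi_m(s,\cdot))$ (with the convention $0\log 0=0$), and since $\sum_{s\in\cS}d_m(s)=\sum_{(s,a)}m(s,a)=1/(1-\gamma)$, this gives $0\le -H(m)\le\frac{\log\vert\cA\vert}{1-\gamma}$ for every $m\in\cM$; in particular $\cM$ is a bounded polytope.

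Next I would show that $m_{\tau}^{\star}$ is only mildly suboptimal for the follower under $r_2^{\xopt}$. Since $r_2^{\xopt}$ is an interior-point reward function of $\tildep_{m^\star}$, it lies in $\intr(\tildep_{m^\star})$, so Proposition~\ref{prop:unique best response} yields $\BR(r_2^{\xopt})=\{m^\star\}$; being the unique maximizer of a linear functional over the polytope $\cM$, $m^\star$ must be a vertex of $\cM$, hence a deterministic occupancy measure, so $H(m^\star)=0$ and $\Mdet^\star=\{m^\star\}$. Combining $\iprod{r_2^{\xopt}}{m^\star}\ge\iprod{r_2^{\xopt}}{m_{\tau}^{\star}}$ (optimality of $m^\star$ for the unregularized problem) with $\iprod{r_2^{\xopt}}{m_{\tau}^{\star}}-\tau H(m_{\tau}^{\star})\ge\iprod{r_2^{\xopt}}{m^\star}-\tau H(m^\star)$ (optimality of $m_{\tau}^{\star}$) gives
\begin{equation*}
0\;\le\;\iprod{r_2^{\xopt}}{m^\star}-\iprod{r_2^{\xopt}}{m_{\tau}^{\star}}\;\le\;\tau\bigl(H(m^\star)-H(m_{\tau}^{\star})\bigr)\;\le\;\frac{2\tau\log\vert\cA\vert}{1-\gamma},
\end{equation*}
using the uniform entropy bound on each of the two entropy terms.

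Finally I would decompose $m_{\tau}^{\star}$ over the vertices of $\cM$ (each of which is a deterministic occupancy measure) as $m_{\tau}^{\star}=\lambda m^\star+\sum_j\lambda_j m_j$ with $m_j\in\Mdet\setminus\Mdet^\star$, $\lambda,\lambda_j\ge 0$, and $\lambda+\sum_j\lambda_j=1$. By the definition of $b$, each $m_j$ satisfies $\iprod{r_2^{\xopt}}{m_j}\le\iprod{r_2^{\xopt}}{m^\star}-b$, so $\iprod{r_2^{\xopt}}{m^\star}-\iprod{r_2^{\xopt}}{m_{\tau}^{\star}}\ge(1-\lambda)b$, and comparing with the previous display gives $1-\lambda\le\frac{2\tau\log\vert\cA\vert}{b(1-\gamma)}$. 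Since $r_1\succeq 0$, discarding the nonnegative terms $\lambda_j\iprod{r_1}{m_j}$ and using $\iprod{r_1}{m^\star}=v_{1}^{\star}$ yields $\iprod{r_1}{m_{\tau}^{\star}}\ge\lambda v_{1}^{\star}\ge\bigl(1-\frac{2\tau\log\vert\cA\vert}{b(1-\gamma)}\bigr)v_{1}^{\star}$, which is the claim; when $\Mdet\setminus\Mdet^\star=\emptyset$, $b$ is taken to be $+\infty$ and the bound is the trivial $\iprod{r_1}{m_{\tau}^{\star}}\ge v_{1}^{\star}$.

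The only genuinely new point relative to Section~\ref{sec:ipa_robust} — and the step I expect to require the most care — is the second one: here the interiority of $r_2^{\xopt}$ in the \emph{full} reward space $\reals^{\vert\cS\vert\vert\cA\vert}$ lets us invoke Proposition~\ref{prop:unique best response} to obtain the \emph{singleton} $\BR(r_2^{\xopt})=\{m^\star\}$, and one must then observe that a singleton best-response set forces $m^\star$ to be a vertex of $\cM$, hence deterministic, which is what makes $b$ well defined and strictly positive. Everything else — the uniform entropy estimate and the vertex decomposition — is the same bookkeeping as in the proof of Proposition~\ref{prop:robustness of ent-regularized-1}.
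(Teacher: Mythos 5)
Your proof is correct and follows the same route the paper intends (the paper's own proof is just a pointer to the proof of Proposition~\ref{prop:robustness of ent-regularized-1}): the uniform entropy bound via Lemma~\ref{lem: sum_of_oc}, the suboptimality gap $\iprod{r_2^{\xopt}}{m^\star}-\iprod{r_2^{\xopt}}{m_\tau^\star}\le 2\tau\log|\cA|/(1-\gamma)$, and the decomposition of $m_\tau^\star$ over the vertices $\Mdet$ of $\cM$ against the gap $b$. Your one addition --- that interiority of $r_2^{\xopt}$ in the full reward space forces $\BR(r_2^{\xopt})=\{m^\star\}$ by Proposition~\ref{prop:unique best response}, hence $m^\star$ is a vertex of $\cM$, is deterministic, and $\Mdet^\star=\{m^\star\}$ --- is correct and is a worthwhile observation rather than a detour, since it both guarantees $b>0$ (otherwise a randomized unique maximizer would give $\Mdet^\star=\emptyset$ and $b=0$) and removes the need to argue separately that every element of $\Mdet^\star$ satisfies $\iprod{r_1}{\cdot}=v_1^\star$.
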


\begin{proof}
Similar to the proof of Proposition~\ref{prop:robustness of ent-regularized-1}.
\end{proof}
Results similar to Proposition~\ref{prop:Robustness to ent-regularized-1} hold when the bounded rationality of the follower is characterized by other models, including an alternative entropy-regularized MDP (Appendix~\ref{subsec: proof of robustness to ent-regularize}) and the $\delta$-optimal responses (Appendix~\ref{subsec:delta-optimal-response-as}).

\subsection{Existence of an optimal interior-point reward function}

Although the existence of an optimal interior-point reward function cannot be shown similarly to Theorem~\ref{Thm:Sufficiency}, some insights can still be gained on the relationship between different reward regions. The following corollary follows immediately from Proposition~\ref{prop:unique best response} on the uniqueness of best response.
\begin{cor}
\label{corno intersection of int and bd}For any $m_{1},m_{2}\in\Mdet$ with $m_{1}\neq m_{2}$, $\mathrm{int}(\mathcal{\tildep}_{m_{1}})\cap\mathcal{\tildep}_{m_{2}}=\emptyset$.
\end{cor}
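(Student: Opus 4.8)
The plan is to derive the corollary directly from Proposition~\ref{prop:unique best response}, treating it as a statement about when the interior of one reward region can intersect another region. Suppose, for contradiction, that there exists some $\mr \in \mathrm{int}(\tildep_{m_1}) \cap \tildep_{m_2}$ for distinct $m_1, m_2 \in \Mdet$. The first half of this hypothesis, $\mr \in \mathrm{int}(\tildep_{m_1})$, lets us invoke Proposition~\ref{prop:unique best response} to conclude $\BR(\mr) = \{m_1\}$. The second half, $\mr \in \tildep_{m_2}$, means by Definition~\ref{def:allocation_region-1} that $m_2 \in \BR(\mr)$. Combining these gives $m_2 \in \{m_1\}$, i.e., $m_2 = m_1$, contradicting the assumption that $m_1 \neq m_2$.

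The argument as sketched actually never uses that $m_1, m_2$ are deterministic, so I would first state it for arbitrary $m_1, m_2 \in \cM$ and then note that the deterministic case is a special instance; alternatively I can simply present the direct three-line contradiction. The only point requiring any care is making sure the quoted proposition applies with the right indexing: Proposition~\ref{prop:unique best response} is phrased as ``if $\mr \in \mathrm{int}(\tildep_m)$ for some $m$, then $\BR(\mr) = \{m\}$,'' so I just apply it with $m = m_1$. Everything else is unwinding definitions.

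I do not anticipate any genuine obstacle here — this is a short deduction, and the heavy lifting was already done in Proposition~\ref{prop:unique best response}. If anything, the subtlety worth a sentence of commentary is \emph{why} this corollary is interesting: it says the interiors of the reward regions partition (up to boundaries) in a clean way, so that an interior-point reward function unambiguously determines which region it belongs to, mirroring the role played by Proposition~\ref{lem:optimal_action} and Theorem~\ref{thm:deterministic region contains non} in the allocation setting. But for the proof itself, the plan is simply: assume a common point, apply Proposition~\ref{prop:unique best response} to the interior membership, read off $m_2 \in \BR(\mr)$ from the region membership, and conclude $m_1 = m_2$, a contradiction.
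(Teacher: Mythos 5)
Your proposal is correct and is essentially identical to the paper's proof, which also applies Proposition~\ref{prop:unique best response} to a point of $\mathrm{int}(\tilde{\cP}_{m_1})$ to get $\BR(\mr)=\{m_1\}$ and concludes $\mr\notin\tilde{\cP}_{m_2}$ (stated contrapositively rather than by contradiction). Your observation that determinism of $m_1,m_2$ is never used is also accurate, as the paper's own argument does not use it either.
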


\begin{proof}
Consider $\mr\in\mathrm{int}(\mathcal{\tildep}_{m_{1}})$. It follows from Proposition~\ref{prop:unique best response} that $\BR(\mr)=\{m_{1}\}$. This implies $m_{2}\notin\BR(\mr)$ or, equivalently, $\mr\notin\tildep_{m_{2}}$.
\end{proof}
Theorem~\ref{thm:deterministic region contains non} can be generalized to the case of general $r_{1}$ to characterize the relationship between reward regions of optimal deterministic and randomized occupancy measures. 
\begin{prop}
\label{thm:deterministic region contains non-1 }Let $m^{\star}$ be a randomized optimal occupancy measure and $\pi^{\star}\in\Pi(m^{\star})$. For any $m\in\Mdet(\pi^{\star})$, it holds that $\tilde{\mathcal{P}}_{\mopt}\subseteq\mathcal{\tilde{P}}_{m}$, and $\tilde{\cP}_{m}$ is an optimal reward region.
\end{prop}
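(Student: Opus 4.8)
The plan is to mimic the proof of Theorem~\ref{thm:deterministic region contains non}, replacing allocation regions by reward regions and invoking the general-reward analogues of Propositions~\ref{lem:optimal_action} and~\ref{prop:all determinisitc m are optimal}. Concretely, we want to show two things for any $m\in\Mdet(\pi^{\star})$: first, the inclusion $\tilde{\mathcal{P}}_{\mopt}\subseteq\tilde{\mathcal{P}}_{m}$, and second, that $\tilde{\mathcal{P}}_m$ is an optimal reward region, i.e.\ that $m$ is an optimal occupancy measure with $\iprod{r_1}{m}=v_1^\star$.

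For the inclusion, the key observation is that Proposition~\ref{lem:optimal_action} and Proposition~\ref{prop:all determinisitc m are optimal} only use the reward allocation $x^\star$ through the modified reward $r_2^{x^\star}$; their statements and proofs go through verbatim if we replace the hypothesis ``$x^\star\in\mathcal{P}_{m^\star}$'' by ``$r_2^{x^\star}\in\tilde{\mathcal{P}}_{m^\star}$'' (equivalently $m^\star\in\BR(r_2^{x^\star})$), since $\BR(\cdot)$ as a function of the follower's reward is exactly what drives those arguments. So first I would restate the generalization: if $\tilde r$ is any follower reward function with $m^\star\in\BR(\tilde r)$ and $\pi^\star\in\Pi(m^\star)$, then every $m^\pi$ with $\pi\in\Pidet(\pi^\star)$ satisfies $m^\pi\in\BR(\tilde r)$. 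Then, to prove $\tilde{\mathcal{P}}_{\mopt}\subseteq\tilde{\mathcal{P}}_m$, take any $\tilde r\in\tilde{\mathcal{P}}_{\mopt}$, so $m^\star\in\BR(\tilde r)$; applying the generalized Proposition~\ref{lem:optimal_action} with this $\tilde r$ and the fact that $m$ is induced by some $\pi\in\Pidet(\pi^\star)$ gives $m\in\BR(\tilde r)$, i.e.\ $\tilde r\in\tilde{\mathcal{P}}_m$.

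For the second claim, that $\tilde{\mathcal{P}}_m$ is an optimal reward region, I would apply the generalized Proposition~\ref{prop:all determinisitc m are optimal}: with $\tilde r = r_2^{x^\star}$ (which does satisfy $m^\star\in\BR(r_2^{x^\star})$ since $(x^\star,m^\star)$ is optimal for~(\ref{eq:allocation_problem_om})), we get $\iprod{r_1}{m}=v_1^\star$, so $m$ is an optimal occupancy measure of the reward design problem, and hence $\tilde{\mathcal{P}}_m$ is an optimal reward region by Definition~\ref{def:allocation_region-1}.

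The main obstacle — really the only nonroutine point — is making sure the generalizations of Propositions~\ref{lem:optimal_action} and~\ref{prop:all determinisitc m are optimal} are legitimate, i.e.\ that nothing in their original proofs secretly relies on the special structure of $r_2^{x^\star}=r+x^\star$ (such as nonnegativity of $x^\star$, or the form of $r_1$) beyond the bare fact that $m^\star$ is a best response to it. Inspecting those proofs (Appendices~\ref{subsec:Proof-of-Lemma_optimal_action} and~\ref{subsec:Proof-of-Proposition all  deterministic oc are}), the argument for Proposition~\ref{lem:optimal_action} is a purely MDP-theoretic statement about deterministic policies supported on optimal actions of a fixed reward function, and the argument for Proposition~\ref{prop:all determinisitc m are optimal} uses that $m^\star$ is optimal for the leader's outer problem — neither needs $x^\star\succeq0$ nor the indicator form of $r_1$. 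So I would either explicitly note that those two proofs apply verbatim with an arbitrary follower reward function in place of $r_2^{x^\star}$, or restate them as lemmas in that generality and cite them here. Once that is granted, the proof of Proposition~\ref{thm:deterministic region contains non-1 } is a two-line repetition of the proof of Theorem~\ref{thm:deterministic region contains non}.
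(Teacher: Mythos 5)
Your proposal is correct and follows essentially the same route as the paper: the paper's proof likewise reduces to two lemmas (Lemmas~\ref{lem:optimal_action-2} and~\ref{lem: optimal_action-3}) that are exactly the reward-function generalizations of Propositions~\ref{lem:optimal_action} and~\ref{prop:all determinisitc m are optimal}, and then repeats the argument of Theorem~\ref{thm:deterministic region contains non}. Your additional care in checking that the original proofs use $x^{\star}$ only through $r_{2}^{x^{\star}}$ and the best-response condition is precisely the justification the paper leaves implicit.
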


\begin{proof}
The proof is based on Lemma~\ref{lem:optimal_action-2} and Lemma~\ref{lem: optimal_action-3} in Appendix~\ref{sec:appendix-reward} and is similar to the proof of Theorem~\ref{thm:deterministic region contains non}.
\end{proof}
\begin{cor}
Let $m^{\star}$ be an optimal occupancy measure and $\pi^{\star}\in\Pi(m^{\star})$. For any $m\in\Mdet(\pi^{\star})$, if $m\neq m^{\star}$, it holds that $\tilde{\mathcal{P}}_{\mopt}\subseteq\mathrm{bd}(\mathcal{\tilde{P}}_{m})$, where $\mathrm{bd}(\mathcal{\tilde{P}}_{m})$ is the boundary of $\mathcal{\tilde{P}}_{m}$.
\end{cor}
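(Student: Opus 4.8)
The plan is to combine the inclusion established in Proposition~\ref{thm:deterministic region contains non-1 } with the disjointness-from-interior statement in Corollary~\ref{corno intersection of int and bd}. Concretely, fix a randomized policy $\pi^{\star}\in\Pi(m^{\star})$ (if $m^{\star}$ is deterministic, pick any inducing policy) and let $m\in\Mdet(\pi^{\star})$ with $m\neq m^{\star}$. By Proposition~\ref{thm:deterministic region contains non-1 }, $\tilde{\mathcal{P}}_{m^{\star}}\subseteq\tilde{\mathcal{P}}_{m}$, so every $\mr\in\tilde{\mathcal{P}}_{m^{\star}}$ lies in $\tilde{\mathcal{P}}_{m}$. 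It remains to show $\mr\notin\mathrm{int}(\tilde{\mathcal{P}}_{m})$; together with $\mr\in\tilde{\mathcal{P}}_{m}$ this gives $\mr\in\mathrm{bd}(\tilde{\mathcal{P}}_{m})$, which is exactly the claim.

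To rule out $\mr\in\mathrm{int}(\tilde{\mathcal{P}}_{m})$, I would invoke Proposition~\ref{prop:unique best response}: if $\mr$ were in the interior of $\tilde{\mathcal{P}}_{m}$, then $\BR(\mr)=\{m\}$ would be a singleton. But since $\mr\in\tilde{\mathcal{P}}_{m^{\star}}$ as well, we have $m^{\star}\in\BR(\mr)$, and since $\mr\in\tilde{\mathcal{P}}_{m}$ we also have $m\in\BR(\mr)$. As $m\neq m^{\star}$, this contradicts $\BR(\mr)$ being the singleton $\{m\}$. Hence $\mr\notin\mathrm{int}(\tilde{\mathcal{P}}_{m})$, and therefore $\mr\in\mathrm{bd}(\tilde{\mathcal{P}}_{m})$. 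Since $\mr\in\tilde{\mathcal{P}}_{m^{\star}}$ was arbitrary, $\tilde{\mathcal{P}}_{m^{\star}}\subseteq\mathrm{bd}(\tilde{\mathcal{P}}_{m})$.

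There is essentially no serious obstacle here, as the corollary is a direct packaging of the two cited results; the only point that warrants a moment's care is handling the degenerate case $m^{\star}\in\Mdet$ itself, where one still needs the hypothesis $m\neq m^{\star}$ to apply Proposition~\ref{prop:unique best response} — this is already built into the statement. One should also note that Proposition~\ref{prop:unique best response} is stated for a single $m$, so the argument above implicitly uses it with $m$ in the role of that occupancy measure; no extra work is needed. I would write the proof in three or four lines exactly along the lines above.
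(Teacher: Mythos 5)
Your proposal is correct and follows essentially the same route as the paper: both combine the inclusion $\tilde{\mathcal{P}}_{m^{\star}}\subseteq\tilde{\mathcal{P}}_{m}$ from Proposition~\ref{thm:deterministic region contains non-1 } with the fact that an interior point of $\tilde{\mathcal{P}}_{m}$ would force $\BR(\mr)=\{m\}$, contradicting $m^{\star}\in\BR(\mr)$. The paper merely packages that second step as Corollary~\ref{corno intersection of int and bd} rather than invoking Proposition~\ref{prop:unique best response} directly, which is an immaterial difference.
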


\begin{proof}
Since $m\neq m^{\star}$, from Corollary~\ref{corno intersection of int and bd}, it holds that $\mathrm{int}(\mathcal{\tilde{P}}_{m})\cap\tilde{\mathcal{P}}_{\mopt}=\emptyset$. Furthermore, $\mathcal{\tilde{P}}_{m}$ is a closed set by definition, which implies $\mathcal{\tilde{P}}_{m}=\mathrm{bd}(\mathcal{\tilde{P}}_{m})\cup\mathrm{int}(\mathcal{\tilde{P}}_{m})$. Since $\tilde{\mathcal{P}}_{\mopt}\subseteq\mathcal{\tilde{P}}_{m}$ according to Proposition~\ref{thm:deterministic region contains non-1 }, it follows that $\tilde{\mathcal{P}}_{\mopt}\subseteq\mathrm{bd}(\mathcal{\tilde{P}}_{m})$.
\end{proof}
Proposition~\ref{thm:deterministic region contains non-1 } states the following fact: Suppose the reward region $\tildep_{\mopt}$ of some optimal randomized occupancy measure $m^{\star}$ contains an optimal interior-point reward function with margin $\tilde{c}$. Let $\pi^{\star}\in\Pi(m^{\star})$. Then for any $m\in\Mdet(\pi^{\star})$, the corresponding reward region $\tildep_{m}$ must also contain an optimal interior-point reward function whose margin is no smaller than $\tilde{c}$. This reveals the special role of deterministic occupancy measures for the problem.

\section{Numerical Experiments}

In this section, we numerically validate the robustness of the maximum-margin optimal interior-point allocation given by (\ref{eq:prob_cc}) and compare it with an arbitrary optimal allocation given by (\ref{MILP}). Denote by $x_{\mathrm{IP}}$ the optimal interior-point allocation given by (\ref{eq:prob_cc}), and denote by $x_{\mathrm{MILP}}$ an arbitrary optimal allocation given by (\ref{MILP}). We sometimes refer to $x_{\mathrm{IP}}$ as the \emph{interior-point solution} and $x_{\mathrm{MILP}}$ as the\emph{ MILP solution}. The robustness of allocation is evaluated in three different environments: a $6\times6$ grid world, a $10\times10$ grid world, and a probabilistic attack graph. We will check two notions of robustness: robustness to nonunique best responses and robustness to a boundedly rational attacker. The remaining notion of robustness, i.e., robustness to uncertain reward perception of the attacker, will be demonstrated by the computed margin of allocation. All numerical experiments were performed on a Macbook Air laptop computer with an Apple M2 processor and 8~GB RAM running macOS Sonoma 14.3.1. The interior-point solutions and MILP solutions in different environments are computed using the Python MIP package with Gurobi 11.0.0.

\subsection{Environments\label{subsec:environments}}

\subsubsection{$6\times6$ grid world}

The $6\times6$ grid world illustrated in Figure~\ref{fig:6x6-grid-world} is modeled as an MDP with finite state and action spaces. Each cell is associated with a 2-tuple that represents the horizontal and vertical coordinates of the cell: The coordinate of the lower-left cell is $(1,1)$, and that of the upper-right cell is $(6,6)$. The grid world consists of a defender and an attacker, who play the roles of the leader and the follower, respectively. The attacker always starts from the state $(1,3)$ and can choose one of the four directions, \textsf{left}, \textsf{right}, \textsf{up}, and \textsf{down}, as the desired direction to move. The attacker will transit to the adjacent cell in the desired direction with probability $0.8$ if the new cell remains within the boundary. The attacker may also move laterally to a new cell in one of the two directions, each with probability $0.1$. The attacker will never move opposite to the desired direction. In any case, if the new cell is out of bounds, then the attacker will stay at the current cell. The grid world has two true goals at $(1,6)$ and $(5,4)$ and two allocated rewards set up by the defender at $(5,2)$ and $(6,5)$. From the perspective of the attacker, the allocated rewards are indistinguishable from the true targets. The perceived reward of the attacker for reaching any of the true goals is $1$. The perceived reward for reaching an allocated reward is equal to the allocated resource therein by the defender. The allocated resource at each allocated reward must be nonnegative, and the total budget of allocation is $4$. The MDP ends when the attacker arrives at a true goal or an allocated reward. The goal of the defender is to find an allocation strategy that maximizes the probability for the attacker to receive an allocated reward. This goal is captured by the reward function of the defender: $r_{1}(s,a)=1\ \forall(s,a):s\in\mathcal{S}_{d}$, and $r_{1}(s,a)=0\ \forall(s,a):s\notin\mathcal{S}_{d}$, which is consistent with the form of $r_1$ introduced in Section~\ref{subsec:prelim-setup}. Using the notation from Section~\ref{sec:Problem mdp}, the problem setup is summarized as follows:
\begin{itemize}
\item State space $\text{\ensuremath{\mathcal{S}}}=\{1,2,\cdots,6\}^{2}$ represents the cells in the $6\times6$ grid world.
\item Action space $\cA=\{\textsf{left},\textsf{right},\textsf{up},\textsf{down}\}$ represents the four desired directions of movement. 
\item The set $\mathcal{S}_{d}=\{s_{1},s_{2}\}$, where $s_{1}=(1,4)$ and $s_{2}=(4,5)$, represents the cells at which the allocated rewards are located.
\item The reward function of the attacker is given by $r$, and that of the defender is given by $r_{1}$. Both functions are defined in Section~\ref{subsec:prelim-setup}.
\end{itemize}
\begin{figure}
\begin{centering}
\includegraphics[width=0.6\textwidth]{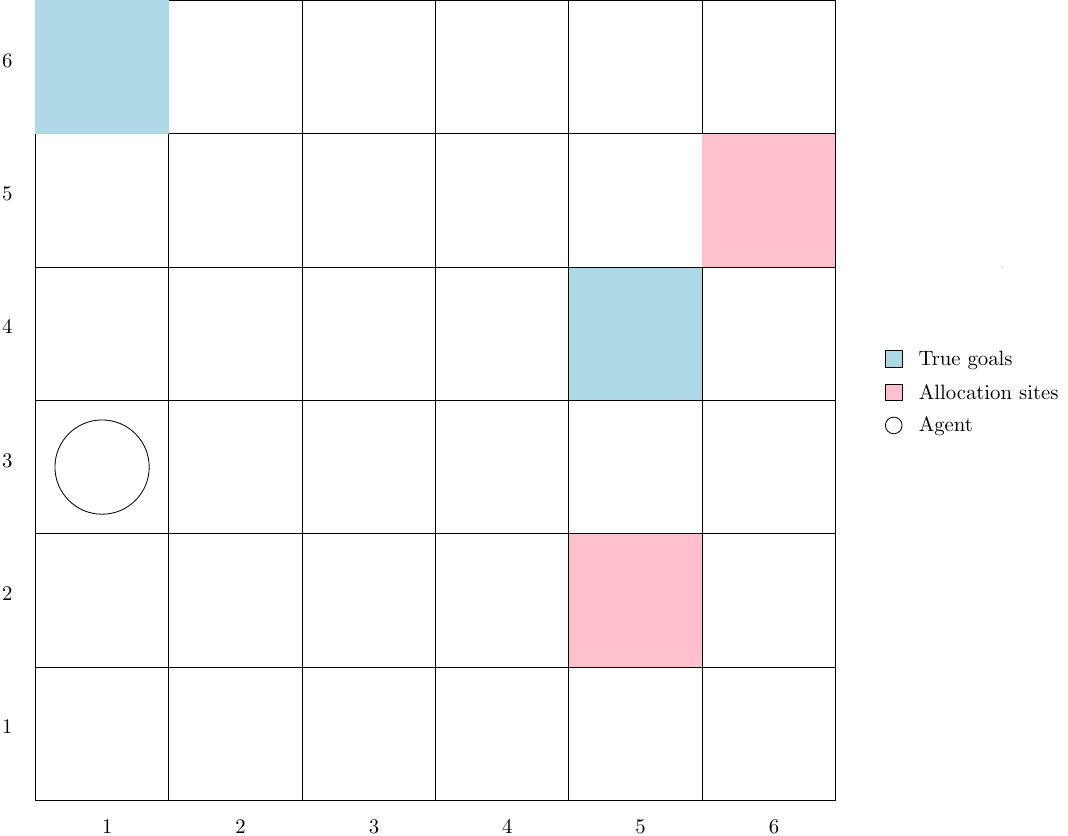}
\par\end{centering}
\caption{The $6\times6$ grid world. The true goals are at $(1,6)$ and $(5,4)$, and the defender can allocate rewards at $(5,2)$ and $(6,5)$. The attacker always starts from $(1,3)$. \label{fig:6x6-grid-world}}
\end{figure}

\subsubsection{$10\times10$ grid world}

The transition kernel, the total budget of allocation, and actions of the attacker in the $10\times10$ grid world (Figure~\ref{fig:10x10-grid-world}) are the same as those in the $6\times6$ grid world. The true goals are at $(6,9)$, $(8,5)$, and $(10,3)$, and the allocated rewards are at $s_{1}=(8,1)$, $s_{2}=(7,4)$, and $s_{3}=(6,9)$. The attacker always starts from the state $(2,6)$. 

\begin{figure}
\begin{centering}
\includegraphics[width=0.8\textwidth]{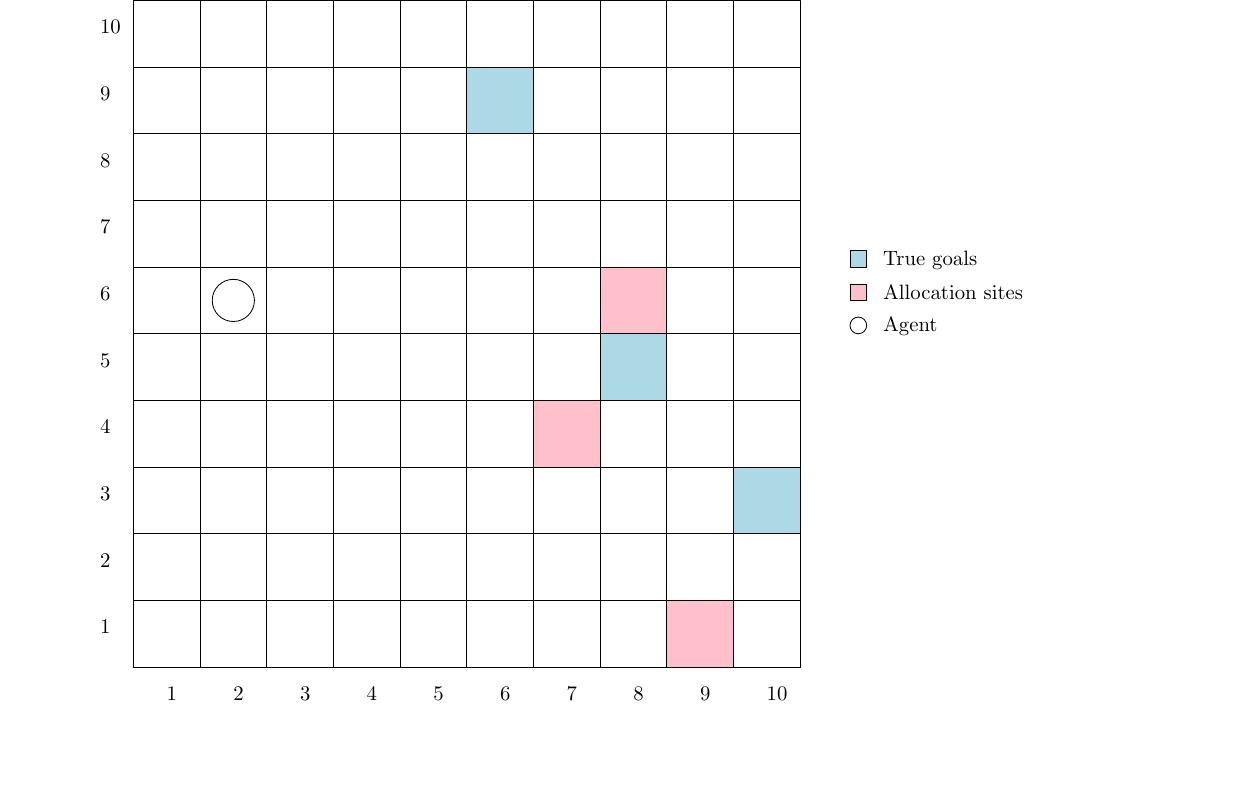}
\par\end{centering}
\caption{The $10\times10$ grid world. The true goals are at $(6,9)$, $(8,5)$ and $(10,3)$, and the defender can allocate rewards at $(8,1)$, $(7,4)$ and $(6,9)$. The attacker always starts from $(2,6)$. \label{fig:10x10-grid-world} }
\end{figure}

\subsubsection{Probabilistic attack graph}

\begin{figure}[h]
	\centering
	\resizebox{0.5\linewidth}{!}{
		\begin{tikzpicture}[->,>=stealth',shorten >=1pt,auto,node distance=2  cm, scale =0.5,transform shape, line width = 0.5pt]
		\node[state, initial] (0) {$0$};
		\node[state] (2) [above right of=0] {$2$};
		\node[state] (3) [below right of=0] {$3$};
		\node[state] (1) [above of=2] {$1$};
		\node[state] (4) [below of=3] {$4$};
		\node[state] (5) [right of=1] {$5$};
		\node[state] (6) [right of=4] {$6$};
		\node[state] (7) [right of=3] {$7$};
		\node[state] (8) [right of=2] {$8$};
		\node[state] (9) [above right of=7] {$9$};
		\node[state,fill={rgb,255:red,173; green,216; blue,230}] (10) [below right of=7] {$10$};
		\node[state,fill={rgb,255:red,255; green,192; blue,203}] (12) [above right of=9] {$12$};
		\node[state,fill={rgb,255:red,255; green,192; blue,203}] (11) [below right of=9] {$11$};
		
		\path (0) edge[line width = 1.2pt, color = black] node {} (1)
		(0) edge[] node {} (2)
		(0) edge[] node {} (3)
		(0) edge[]  node {} (4)
		(1) edge[line width = 1.2pt, color = black] node {} (5)
		(1) edge[] node {} (8)
		(1) edge[] node {} (6)
		(2) edge[line width = 1.2pt, color = black] node {} (6)
		(2) edge[] node {} (7)
		(3) edge[] node {} (5)
		(3) edge[] node {} (7)
		(4) edge[]  node {} (5)
		(4) edge[] node {} (7)
		(6) edge[] node {} (9)
		(6) edge[] node {} (10)
		(7) edge[line width = 1.2pt, color = black] node {} (9)
		(7) edge[] node {} (8)
		(9) edge[] node {} (11)
		(9) edge[] node {} (12)
		;
		\end{tikzpicture}
	}  
	\caption{A probabilistic attack graph.}
	\label{fig: simple probabilistic attack graph}
\end{figure}
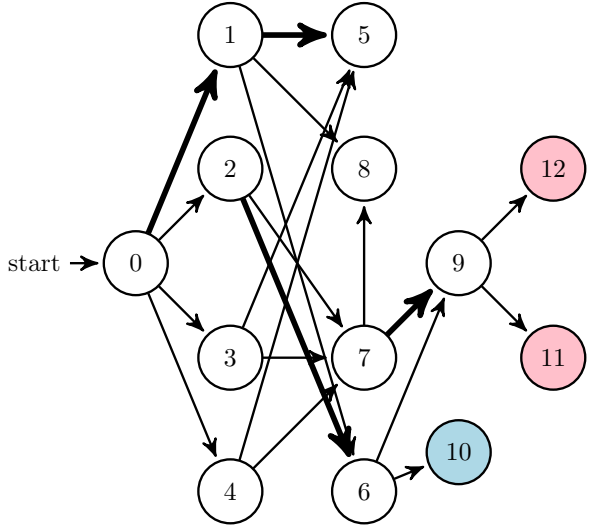

Figure~\ref{fig: simple probabilistic attack graph} shows the probabilistic attack graph used in our numerical experiments. The probabilistic attack graph is an MDP whose state space consists of all the nodes in the graph. The attacker has four actions $\{a,b,c,d\}$.  The transition probabilities of the MDP are defined by the edges of the graph. For clarity, the graph only shows the possible transitions given action $a$, where a thick (resp. thin) arrow represents a high (resp. low) transition probability. For example, when the attacker is at node $0$ and takes action $a$, the transition probabilities are given by $\cT(0, a, 1) =0.7$ and $\cT(0, a, i) = 0.1$ for $i = 2,3,4$. A complete description of the transition function $\cT$ is provided at the URL in Footnote\footnote{\url{https://www.dropbox.com/s/nyycf57vdry139j/MDPTransition.pdf}}. The attacker always starts from state $0$. The only true goal is node $10$. The reward allocations are set at $\{11, 12\}$. Similar to the grid world environments, the perceived reward of the attacker for reaching the true goal is $1$. The perceived reward for reaching an allocated reward is equal to the allocated resource therein by the defender. The total budget of allocation is $4$. The reward function $r_1$ of the defender is the same as the one used in the grid world environments.

\subsection{Robustness to nonunique best responses\label{subsec:Robustness-to-nonunique experiment}}

\subsubsection{Method of validation}

Our goal is to verify Proposition~\ref{prop:robustness_nonunique} that the interior-point solution $x_{\mathrm{IP}}$ has the property that all its best responses are optimal, i.e., $\optival(x_{\mathrm{IP}})=\pessval(x_{\mathrm{IP}})=v_{1}^{\star}$. Since $\optival(x_{\mathrm{IP}})=v_{1}^{\star}$ by the definition of $x_{\mathrm{IP}}$, it remains to show that $\pessval(x_{\mathrm{IP}})=v_{1}^{\star}$. Although $\pessval(x_{\mathrm{IP}})$ can be obtained by solving the problem in (\ref{eq:pessimistic problem}) in theory, the constraint may become infeasible due to finite numerical precision. Instead, given an optimal allocation $x^{\star}$ of the reward design problem in (\ref{eq:allocation_problem_om}), we solve the relaxed problem 
\begin{equation}
\begin{alignedat}{2} & \optmin_{m\in\mathcal{M}} & \quad & \iprod{r_{1}}{m}\\
 & \optst &  & \max_{m'\in\mathcal{M}}\iprod{r_{2}^{x^{\star}}}{m'}-\iprod{r_{2}^{x^{\star}}}{m}\leq\epsilon
\end{alignedat}
\label{eq: modified robust check}
\end{equation}
for different values of $\epsilon$. Denote the optimal value of (\ref{eq: modified robust check}) by $v_{\epsilon}(\xopt)$. If $\pessval(\xopt)=v_{1}^{\star}$, then $v_{\epsilon}(\xopt)$ can be made arbitrarily close to $v_{1}^{\star}$ by choosing $\epsilon$ sufficiently small. 
\begin{prop}
\label{prop: modified robust check}Let $x^{\star}$ be an optimal allocation. Then $\lim_{\epsilon\to0^{+}}v_{\epsilon}(x^{\star})=v_{1}^{\star}$ if and only if $x^{\star}$ is robust to nonunique best responses, i.e., $\optival(x^{\star})=\pessval(x_{\mathrm{}}^{\star})=v_{1}^{\star}$.
\end{prop}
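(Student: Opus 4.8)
The plan is to reduce the ``if and only if'' to a single \emph{unconditional} identity, namely $\lim_{\epsilon\to0^{+}}v_{\epsilon}(\xopt)=\pessval(\xopt)$, where $v_{\epsilon}(\xopt)$ denotes the optimal value of~(\ref{eq: modified robust check}). First I would record the half that is automatic for \emph{any} optimal allocation: since $\xopt$ is optimal there is $m^{\star}\in\BR(\xopt)$ with $\iprod{r_{1}}{m^{\star}}=v_{1}^{\star}$, while every $m\in\BR(\xopt)$ makes $(\xopt,m)$ feasible for~(\ref{eq:allocation_problem_om}), so $\iprod{r_{1}}{m}\le v_{1}^{\star}$; hence $\optival(\xopt)=v_{1}^{\star}$ always, and the condition ``$\optival(\xopt)=\pessval(\xopt)=v_{1}^{\star}$'' is equivalent to ``$\pessval(\xopt)=v_{1}^{\star}$''. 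Thus once the displayed identity is proved, the proposition follows immediately: $\lim_{\epsilon\to0^{+}}v_{\epsilon}(\xopt)=v_{1}^{\star}$ iff $\pessval(\xopt)=v_{1}^{\star}$ iff $\xopt$ is robust to nonunique best responses.

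To set up the identity, write $v_{2}^{\star}\deq\max_{m\in\cM}\iprod{r_{2}^{\xopt}}{m}$ and, for $\epsilon\ge0$, $\cM_{\epsilon}\deq\{m\in\cM\mid v_{2}^{\star}-\iprod{r_{2}^{\xopt}}{m}\le\epsilon\}$, the set of $\epsilon$-optimal follower responses, so that $v_{\epsilon}(\xopt)=\min_{m\in\cM_{\epsilon}}\iprod{r_{1}}{m}$, $\cM_{0}=\BR(\xopt)$, the family $\{\cM_{\epsilon}\}$ is nested ($\cM_{\epsilon_{1}}\subseteq\cM_{\epsilon_{2}}$ whenever $\epsilon_{1}\le\epsilon_{2}$), and $\bigcap_{\epsilon>0}\cM_{\epsilon}=\cM_{0}=\BR(\xopt)$. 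Since $\cM$ is a compact polytope and $\iprod{r_{1}}{\cdot}$ is linear, all these minima are attained and $\BR(\xopt)\neq\emptyset$. From nestedness, $\epsilon\mapsto v_{\epsilon}(\xopt)$ is nondecreasing as $\epsilon\downarrow0$, and from $\BR(\xopt)\subseteq\cM_{\epsilon}$ we obtain $v_{\epsilon}(\xopt)\le\min_{m\in\BR(\xopt)}\iprod{r_{1}}{m}=\pessval(\xopt)$; hence the monotone limit $L\deq\lim_{\epsilon\to0^{+}}v_{\epsilon}(\xopt)$ exists and $L\le\pessval(\xopt)$.

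The reverse inequality $L\ge\pessval(\xopt)$ is the one step requiring genuine care. I would pick $\epsilon_{k}\downarrow0$ and minimizers $m_{k}\in\cM_{\epsilon_{k}}$ with $\iprod{r_{1}}{m_{k}}=v_{\epsilon_{k}}(\xopt)$, use compactness of $\cM$ to pass to a subsequence $m_{k}\to\bar{m}\in\cM$, and then argue that for each fixed $\epsilon>0$ we have $m_{k}\in\cM_{\epsilon_{k}}\subseteq\cM_{\epsilon}$ for all large $k$, so closedness of $\cM_{\epsilon}$ gives $\bar{m}\in\cM_{\epsilon}$; letting $\epsilon\downarrow0$ yields $\bar{m}\in\bigcap_{\epsilon>0}\cM_{\epsilon}=\BR(\xopt)$. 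Therefore $\pessval(\xopt)\le\iprod{r_{1}}{\bar{m}}=\lim_{k}\iprod{r_{1}}{m_{k}}=L$, which together with the previous bound gives $L=\pessval(\xopt)$ and finishes the proof. The delicate point is precisely this last extraction: the $m_{k}$ are only \emph{near}-optimal for the follower, and one must check that a limit point is an \emph{exact} best response; this is exactly where compactness of $\cM$ and closedness of each $\cM_{\epsilon}$ (equivalently, continuity of $\iprod{r_{2}^{\xopt}}{\cdot}$) are used, and it is what prevents the argument from degenerating. Everything else---nestedness, monotonicity of $v_{\epsilon}$, attainment of the minima, and the reduction in the first paragraph---is routine bookkeeping.
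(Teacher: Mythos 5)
Your proof is correct, but it takes a genuinely different route from the paper's. You establish the unconditional identity $\lim_{\epsilon\to0^{+}}v_{\epsilon}(\xopt)=\pessval(\xopt)$ by a soft compactness argument on the nested family $\cM_{\epsilon}$ of $\epsilon$-optimal responses (extracting a limit point of near-minimizers and using closedness of each $\cM_{\epsilon}$ to certify that it is an exact best response), and then reduce the equivalence to $\optival(\xopt)=v_{1}^{\star}$, which holds for every optimal allocation. The paper instead proves the ``if'' direction quantitatively: it writes any $\epsilon$-optimal $m$ as a convex combination of vertices of $\cM$ (the deterministic occupancy measures, via Lemma~\ref{lem:Deterministic occupancy measures are verticies}), uses the gap $b=\iprod{r_{2}^{\xopt}}{m^{\star}}-\max_{m\in\Mdet\setminus\BR(\xopt)}\iprod{r_{2}^{\xopt}}{m}>0$ to bound the total weight on vertices outside $\BR(\xopt)$ by $\epsilon/b$, and concludes $v_{1}^{\star}-v_{\epsilon}(\xopt)\to0$ with an explicit linear-in-$\epsilon$ rate; its ``only if'' direction is the same one-line feasibility observation implicit in your reduction. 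The trade-off: the paper's vertex decomposition yields a convergence rate governed by $1/b$ (useful for interpreting the finite-$\epsilon$ numerics in the experiments), whereas your argument is more economical, requires only compactness of $\cM$ and continuity of $\iprod{r_{2}^{\xopt}}{\cdot}$, and delivers the slightly stronger structural fact that the limit equals $\pessval(\xopt)$ even when $\xopt$ is not robust. Both arguments implicitly rely on $\cM$ being a bounded polytope (i.e., $\gamma<1$), so you are on the same footing as the paper there.
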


\begin{proof}
See Appendix~\ref{subsec:Modified robust check}.
\end{proof}

\subsubsection{$6\times6$ grid world\label{subsec:6_6 grid world nonunique best responses}}

For the $6\times6$ grid world, the MILP defined in (\ref{MILP}) gives an optimal reward allocation $x_{\mathrm{MILP}}=(1.946,\ 1.774)$. The defender's expected payoff is $0.433$. In comparison, the maximum-margin optimal interior-point allocation given by (\ref{eq:prob_cc}) is $x_{\mathrm{IP}}=(2.122,\ 1.869)$. The defender's expected payoff under $x_{\mathrm{IP}}$ is the same as the MILP solution. The margin of the interior-point solution is $0.087$. 

We then computed $v_{\epsilon}(x_{\mathrm{MILP}})$ and $v_{\epsilon}(x_{\mathrm{IP}})$ under different values of $\epsilon$ by solving problem (\ref{eq: modified robust check}). The results are shown in Table~\ref{tab:pess and opt value} and Figure~\ref{fig:pess and opt value}. As $\epsilon$ approaches $0$, the value of $v_{\epsilon}(x_{\mathrm{IP}})$ also approaches $v_{1}^{\star}=0.433$. By Proposition~\ref{prop: modified robust check}, the results support Proposition~\ref{prop:robustness_nonunique} that the interior-point solution is robust to nonunique best responses. In comparison, $x_{\mathrm{MILP}}$ is not an interior-point allocation. The value of $v_{\epsilon}(x_{\mathrm{MILP}})$ appears to converge as $\epsilon$ approaches $0$, with the gap $v_{1}^{\star}-v_{\epsilon}(x_{\mathrm{MILP}})>0.03$ even when $\epsilon=10^{-10}$, showing that $x_{\mathrm{MILP}}$ is not robust to nonunique best responses.

\begin{table}
\begin{centering}
\begin{tabular}{|c|c|c|c|c|c|}
\hline 
$\epsilon$ & $10^{-3}$ & $10^{-4}$ & $10^{-5}$ & $10^{-6}$ & $10^{-7}$\tabularnewline
\hline 
$v_{\epsilon}(x_{\mathrm{MILP}})$ & $0.107$ & $0.195$ & $0.378$ & $0.396$ & $0.398$\tabularnewline
\hline 
$v_{\epsilon}(x_{\mathrm{IP}})$ & $0.379$ & $0.425$ & $0.432$ & $0.432$ & $0.433$\tabularnewline
\hline 
\end{tabular}
\par\end{centering}
\caption{$6 \times 6$ grid world case: The optimal values of (\ref{eq: modified robust check}) for the MILP solution and the interior-point solution under different values of $\epsilon$.\label{tab:pess and opt value}}

\end{table}

\begin{figure}
\centering{}\includegraphics[width=0.7\textwidth]{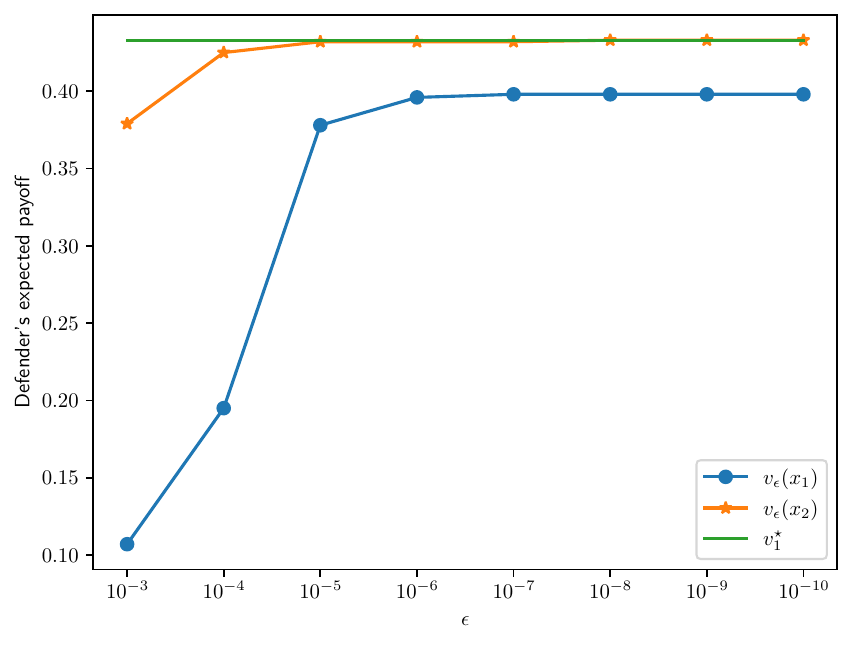}\caption{$6\times6$ grid world case: The optimal values of (\ref{eq: modified robust check}) for the MILP solution and the interior-point solution under different values of $\epsilon$. \label{fig:pess and opt value}}
\end{figure}

\subsubsection{$10\times10$ grid world\label{subsec:10*10-grid-world nonunique BR}}

For the $10\times10$ grid world, the optimal solution given by the MILP is $x_{\mathrm{MILP}}=(1.429,\ 0.000,\ 1.466)$, and the interior-point solution is $x_{\mathrm{IP}}=(1.836,\ 0.000,\ 2.164)$. %
The optimal expected payoffs of the defender are both $0.495$ given two reward allocations $x_{\mathrm{MILP}}$ and $x_{\mathrm{IP}}$. The margin of the interior-point solution is $0.061$.

The optimal values of (\ref{eq: modified robust check}) for $x_{\mathrm{MILP}}$ and $x_{\mathrm{IP}}$ under different $\epsilon$ are given in (\ref{tab:table of expected payoffs 10x10}). In this case, the interior-point solution remains robust to nonunique best responses according to the numerical results. The MILP solution may also be an optimal interior-point allocation, but it is difficult to conclude definitively due to finite numerical precision. If the defender's payoff finally approaches $0.495$ under $x_{\mathrm{MILP}}$, then $x_{\mathrm{MILP}}$ must be robust to nonunique best responses due to Proposition~\ref{prop: modified robust check}. 

\begin{table}
\begin{centering}
\begin{tabular}{|c|c|c|c|c|c|}
\hline 
$\epsilon$ & $10^{-3}$ & $10^{-4}$ & $10^{-5}$ & $10^{-6}$ & $10^{-6}$\tabularnewline
\hline 
$v_{\epsilon}(x_{\mathrm{MILP}})$ & $0.491$ & $0.494$ & $0.494$ & $0.494$ & $0.494$\tabularnewline
\hline 
$v_{\epsilon}(x_{\mathrm{IP}})$ & $0.494$ & $0.495$ & $0.495$ & $0.495$ & $0.495$\tabularnewline
\hline 
\end{tabular}
\par\end{centering}
\caption{$10\times10$ grid world case: The optimal values of (\ref{eq: modified robust check}) for the MILP solution and the interior-point solution under different values of $\epsilon$. \label{tab:table of expected payoffs 10x10}}
\end{table}

\subsubsection{Attack graph\label{subsec:Attack-graph-grid-world nonqunique BR}}

For the probabilistic attack graph, the MILP solution is $x_{\mathrm{MILP}}=(1.218,\ 0)$, and the interior-point solution is $x_{\mathrm{IP}}=(2.667,1.333)$ with a margin of $1.333$. Both solutions lead to a payoff of $0.655$ for the defender. Table~\ref{tab:table of expected payoffs attack graph} shows the optimal values of (\ref{eq: modified robust check}) for $x_{\mathrm{MILP}}$ and $x_{\mathrm{IP}}$ under different $\epsilon$. The numerical results imply that the interior-point solution is robust to nonunique best responses, whereas the MILP solution is not.

\begin{table}
\begin{centering}
\begin{tabular}{|c|c|c|c|c|c|}
\hline 
$\epsilon$ & $10^{-3}$ & $10^{-4}$ & $10^{-5}$ & $10^{-6}$ & $10^{-6}$\tabularnewline
\hline 
$v_{\epsilon}(x_{\mathrm{MILP}})$ & $0.247$ & $0.248$ & $0.248$ & $0.248$ & $0.248$\tabularnewline
\hline 
$v_{\epsilon}(x_{\mathrm{IP}})$ & $0.654$ & $0.654$ & $0.655$ & $0.655$ & $0.655$\tabularnewline
\hline 
\end{tabular}
\par\end{centering}
\caption{Attack graph case: The optimal values of (\ref{eq: modified robust check}) for the MILP solution and the interior-point solution under different values of $\epsilon$.  \label{tab:table of expected payoffs attack graph}}
\end{table}

\subsection{Robustness to a boundedly rational attacker}

For each environment introduced in Section~\ref{subsec:environments}, we computed the defender's payoff against a boundedly rational attacker. Our model of bounded rationality assumes that the attacker solves an entropy-regularized MDP in (\ref{eq:standard ent-regularized}). The parameter $\tau$ in (\ref{eq:standard ent-regularized}) reflects the level of rationality of the attacker: The smaller the value of $\tau$, the more rational the attacker. 


For the $6\times6$ grid world, Table~\ref{tab:Table-of-expected-payoffs} gives the expected payoffs of the defender when facing attackers with different levels of rationality for the interior-point solution $x_\mathrm{IP}$ (Row~3) and the MILP solution $x_\mathrm{MILP}$ (Row~4), respectively. From Proposition~\ref{prop:robustness of ent-regularized-1}, when the interior-point solution is used for allocation, a lower bound for the defender's payoff should approach $v_{1}^{\star} = \iprod{r_1}{\mopt}$ as $\tau \to 0$. Recall from Section~\ref{subsec:6_6 grid world nonunique best responses} that the optimal expected payoff against a rational attacker is given by $v_{1}^{\star}=0.433$. It can be seen that the theoretical result given by Proposition~\ref{prop:robustness of ent-regularized-1} is consistent with the numerical results in Row~3 of Table~\ref{tab:Table-of-expected-payoffs}. In comparison, the MILP solution does not enjoy the same payoff guarantee. The payoff for the MILP solution remains noticeably below $v_{1}^{\star}$ even when $\tau$ is as small as $10^{-5}$. In addition, the MILP solution performs consistently worse than the interior-point solution for all values of $\tau$. 

\begin{table}
\centering{}%
\begin{tabular}{|c|c|c|c|c|c|}
\hline 
$\tau$ & $10^{-1}$ & $10^{-2}$ & $10^{-3}$ & $10^{-4}$ & $10^{-5}$\tabularnewline
\hline 
Optimal solution & $1.31\times10^{-3}$ & $0.429$ & $0.433$ & $0.433$ & $0.433$\tabularnewline
\hline 
Interior-point solution & $1.29\times10^{-3}$ & $0.429$ & $0.433$ & $0.433$ & $0.433$\tabularnewline
\hline 
MILP solution & $8.63\times10^{-4}$ & $0.335$ & $0.324$ & $0.385$ & $0.414$\tabularnewline
\hline 
\end{tabular}\caption{$6\times6$ grid world case: Expected payoffs of the defender for the interior-point solution and the MILP solution when facing attackers with different levels of rationality. The optimal payoff against a rational attacker ($\tau = 0$) is given by $v_{1}^{\star} = 0.433$.\label{tab:Table-of-expected-payoffs}}
\end{table}

For comparison, Row~2 of Table~\ref{tab:Table-of-expected-payoffs} shows the optimal expected payoffs of the defender against a boundedly rational attacker, where the defender is assumed to know the actual value of $\tau$ and choose the reward allocation accordingly. The expected payoff of the defender for the interior-point solution is found to be close to the optimal payoff. In other words, the interior-point solution leads to a near-optimal payoff even when the defender incorrectly assumes a rational attacker. 

Similar relationships between the payoffs were observed for the $10\times10$ grid world and the probabilistic attack graph. The numerical results are shown in Table~\ref{tab:Table-of-expected-payoffs 10*10} and Table~\ref{tab:Table-of-expected-payoffs attack graph}.

\begin{table}
\centering{}%
\begin{tabular}{|c|c|c|c|c|c|}
\hline 
$\tau$ & $10^{-1}$ & $10^{-2}$ & $10^{-3}$ & $10^{-4}$ & $10^{-5}$\tabularnewline
\hline 
Optimal solution & $0.194$ & $0.489$ & $0.495$ & $0.495$ & $0.495$\tabularnewline
\hline 
Interior point solution & $1.21\times10^{-3}$ & $0.489$ & $0.495$ & $0.495$ & $0.495$\tabularnewline
\hline 
MILP solution & $1.78\times10^{-4}$ & $0.484$ & $0.495$ & $0.495$ & $0.495$\tabularnewline
\hline 
\end{tabular}\caption{$10\times10$ grid world case: Expected payoffs of the defender for the interior-point solution and the MILP solution when facing attackers with different levels of rationality. The optimal payoff against a rational attacker ($\tau = 0$) is given by $v_{1}^{\star} = 0.495$.\label{tab:Table-of-expected-payoffs 10*10}}
\end{table}

\begin{table}
\centering{}%
\begin{tabular}{|c|c|c|c|c|c|}
\hline 
$\tau$ & $10^{-1}$ & $10^{-2}$ & $10^{-3}$ & $10^{-4}$ & $10^{-5}$\tabularnewline
\hline 
Optimal solution & $0.620$ & $0.655$ & $0.655$ & $0.655$ & $0.655$\tabularnewline
\hline 
Interior-point solution & $0.583$ & $0.655$ & $0.655$ & $0.655$ & $0.655$\tabularnewline
\hline 
MILP solution & $0.407$ & $0.446$ & $0.446$ & $0.446$ & $0.500$\tabularnewline
\hline 
\end{tabular}\caption{Attack graph case: Expected payoffs of the defender for the interior-point solution and the MILP solution when facing attackers with different levels of rationality. The optimal payoff against a rational attacker ($\tau = 0$) is given by $v_{1}^{\star} = 0.655$.\label{tab:Table-of-expected-payoffs attack graph}}
\end{table}

\subsection{Scalability}
 This subsection discusses the scalability of solving the optimization problem in (\ref{eq:prob_cc}). Instead of solving (\ref{eq:prob_cc}) directly, we obtained the optimal value of (\ref{eq:prob_cc}) via the bisection method. Each step of the bisection checks whether a proposed margin $c$ is feasible. The initial upper bound for $c$ is given by $C$, the total budget of allocation, and the initial lower bound for $c$ is $0$. For both environments, the bisection procedure typically ended after $10$ to $20$ steps. Each bisection step ranged from $0.2$ to $354$~seconds for the $6 \times 6$ grid world and from $1.9$ to $85$~seconds for the $10 \times 10$ grid world. We suspect that the longer running times for the $6 \times 6$ grid world were instance-dependent; the MILP in \eqref{eq:prob_cc} for the $6 \times 6$ grid world was possibly a harder problem instance than the one for the $10 \times 10$ grid world. The running time was also found to depend on whether the proposed margin is feasible. Longer running times were observed when the proposed margin was infeasible, especially when the proposed margin was close to feasibility.

\section{Conclusions}

We study the problem of reward design for Markov decision processes in a leader-follower setup, where the leader is tasked with modifying the follower's reward function to induce a policy favorable to the leader. Existing methods for reward design typically rely on exact knowledge of the follower's reward function and can be sensitive to modeling errors. Motivated by the issue of sensitivity, we present a new method of reward design with robustness guarantees when the follower's reward function is unknown. Our method can be viewed as a formal algorithmic counterpart of the folklore solution for achieving robustness based on perturbing the leader's strategy. The reward modification in our method is based on the concept of \emph{optimal interior-point allocation}. We show that an optimal interior-point allocation provably offers robustness to three types of uncertainties, including 1) how the follower breaks ties in the presence of nonunique best responses, 2) inexact knowledge of how the follower perceives reward modifications, and 3) bounded rationality of the follower. 

 We also study the existence of an optimal interior-point allocation when the leader's reward function adopts a special form. One complication in finding an optimal interior-point allocation is that the corresponding optimal allocation region cannot be predetermined because some may have an empty interior. Nevertheless, our result shows that it suffices to focus on optimal allocation regions of deterministic occupancy measures. This characterization of the optimal allocation regions leads to two sufficient and necessary conditions for the existence of an optimal interior-point allocation. One examines whether the allocation budget needs to be exhausted to achieve the optimal leader's payoff, which can be verified numerically. The other requires the existence of an allocation robust to nonunique best responses of the follower, which establishes the central role that optimal interior-point allocations play in achieving robustness. When the leader uses a general reward function, most results can be generalized by replacing the optimal interior-point allocation with the \emph{optimal interior-point reward function}. However, a sufficient and necessary condition for the existence of an optimal interior-point reward function remains an open question. 

We further show that an optimal interior-point allocation can be found by mixed-integer linear programming if such an allocation exists. In fact, the mixed-integer linear program can be used to find an optimal interior-point allocation with the largest margin. Numerical experiments have been conducted in several simulation environments inspired by problems in cybersecurity. The experiments not only validate the theoretical guarantees on robustness but also show that our method scales to problems of practical size.

\section*{Acknowledgments}
We would like to thank Tamer Ba\c{s}ar and Haifeng Xu for helpful discussions.

\bibliographystyle{apalike}
\bibliography{Robust_Reward_Shaping}

\appendix
\section{Basic results about MDPs}
\subsection{Policies and occupancy measures}

\subsubsection{Proof of Proposition~\ref{prop:policies induce occupancy measure}\label{subsec:Proof of polices induce occupancy measure}}
\begin{proof}
By definition
\begin{align*}
m(s,a) & =\mathbb{E}_{\pi,s_{0}\sim\rho}\left[\sum_{t=0}^{\infty}\gamma^{t}\mathbb{P}(s_{t}=s,a_{t}=a)\right]\\
 & =\mathbb{E}_{\pi,s_{0}\sim\rho}\left[\sum_{t=0}^{\infty}\gamma^{t}\mathbb{P}(s_{t}=s)\cdot\pi(s,a)\right]\\
 & =\pi(s,a)\cdot\mathbb{E}_{\pi,s_{0}\sim\rho}\left[\sum_{t=0}^{\infty}\gamma^{t}\mathbb{P}(s_{t}=s)\right]\\
 & =\pi(s,a)\cdot\mathbb{E}_{\pi,s_{0}\sim\rho}\left[\sum_{t=0}^{\infty}\sum_{a\in\mathcal{A}}\gamma^{t}\mathbb{P}(s_{t}=s,a_{t}=a)\right]\\
 & =\pi(s,a)\cdot\sum_{a\in\mathcal{A}}\mathbb{E}_{\pi,s_{0}\sim\rho}\left[\sum_{t=0}^{\infty}\gamma^{t}\mathbb{P}(s_{t}=s,a_{t}=a)\right]\\
 & =\pi(s,a)\cdot\sum_{a\in\mathcal{A}}m(s,a).
\end{align*}
The exchangeability of the summation is due to the absolute convergence of $\sum_{t=0}^{\infty}\gamma^{t}\mathbb{P}(s_{t}=s,a_{t}=a)$. If $\sum_{a}m(s,a)\neq0$, the definition requires that $\pi(s,a)=\frac{m(s,a)}{\sum_{a}m(s,a)}$. Otherwise, $\pi(s,\cdot)$ can be an arbitrary probability distribution over $\mathcal{A}$.
\end{proof}

\subsubsection{Occupancy measures induced by deterministic policies\label{subsec:proof of induce same set of occupancy measure}\label{subsec: proof of Lem:Dterministic}}
\begin{prop}
\label{prop:Induce same set of occupancy measure-1}For any $\pi_{1},\pi_{2}\in\Pi(m)$, it holds that $\Mdet(\pi_{1})=\Mdet(\pi_{2})$. 
\end{prop}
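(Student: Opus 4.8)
The plan is to exploit the characterization in Proposition~\ref{prop:policies induce occupancy measure}. Write $S_{+} := \{s \in \cS : \sum_{a'\in\cA} m(s,a') > 0\}$ for the set of states with positive occupancy under $m$. If $\pi_1,\pi_2 \in \Pi(m)$, then by (\ref{eq: measure to pi}) both $\pi_1(s,\cdot)$ and $\pi_2(s,\cdot)$ equal the normalized vector $m(s,\cdot)/\sum_{a'}m(s,a')$ for every $s \in S_{+}$; the two policies may differ only at states $s \notin S_{+}$, which carry no occupancy. The key point is that the occupancy measure of a deterministic refinement of $\pi_1$ does not depend on that refinement's choices outside $S_{+}$, and hence can be reproduced by a refinement of $\pi_2$.

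First I would record a reachability fact: for any $\pi' \in \Pidet(\pi_1)$ one has $\mathrm{supp}_{\cS}(m^{\pi'}) \subseteq S_{+}$, where $\mathrm{supp}_{\cS}(m^{\pi'}) = \{s : \sum_a m^{\pi'}(s,a) > 0\}$. Indeed, if $s$ lies in this support there is a trajectory from $\rho$ to $s$ along which $\pi'$ takes each action with positive probability; since $\pi'(s'',a'') > 0$ forces $\pi_1(s'',a'') > 0$ by the definition of $\Pidet(\pi_1)$, the same trajectory has positive probability under $\pi_1$, and because $\gamma > 0$ it contributes positively to $m^{\pi_1} = m$, so $s \in \mathrm{supp}_{\cS}(m) = S_{+}$.

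Next I would prove a substitution lemma: if policies $\pi',\pi''$ agree on $S_{+}$ and $\mathrm{supp}_{\cS}(m^{\pi'}) \subseteq S_{+}$, then $m^{\pi''} = m^{\pi'}$. Set $m := m^{\pi'} \in \cM$. For any $s$ with $\sum_{a'}m(s,a') \neq 0$ we have $s \in S_{+}$, so $\pi''(s,\cdot) = \pi'(s,\cdot)$, and since $\pi'$ induces $m$, Proposition~\ref{prop:policies induce occupancy measure} gives $\pi''(s,a) = \pi'(s,a) = m(s,a)/\sum_{a'}m(s,a')$; for $s$ with $\sum_{a'}m(s,a') = 0$ there is no constraint. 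Thus $\pi''$ has the form (\ref{eq: measure to pi}) relative to $m$, and Proposition~\ref{prop:policies induce occupancy measure} yields $m^{\pi''} = m = m^{\pi'}$.

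Finally, given $\pi' \in \Pidet(\pi_1)$, I would construct $\pi'' \in \Pidet(\pi_2)$ with $m^{\pi''} = m^{\pi'}$ by setting $\pi''(s,\cdot) = \pi'(s,\cdot)$ for $s \in S_{+}$ and, for each $s \notin S_{+}$, letting $\pi''$ deterministically choose some action $a$ with $\pi_2(s,a) > 0$. On $S_{+}$ the policies $\pi_1$ and $\pi_2$ have the same zero pattern (both equal the normalized $m(s,\cdot)$), so $\pi'(s,a) = 0$ whenever $\pi_2(s,a) = 0$; off $S_{+}$ this holds by construction. Hence $\pi'' \in \Pidet(\pi_2)$, and the substitution lemma (applicable by the reachability fact) gives $m^{\pi''} = m^{\pi'}$, so $\Mdet(\pi_1) \subseteq \Mdet(\pi_2)$; equality follows by exchanging the roles of $\pi_1$ and $\pi_2$. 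I do not expect a serious obstacle here; the only care needed is to avoid circularity (the reachability fact must be established without reference to $\pi''$) and to conclude $m^{\pi''} = m^{\pi'}$ via Proposition~\ref{prop:policies induce occupancy measure} rather than a trajectory-by-trajectory comparison, since $\pi'$ and $\pi''$ may genuinely differ on the (unreachable) states outside $S_{+}$.
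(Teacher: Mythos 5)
Your proof is correct and follows essentially the same route as the paper's: construct a deterministic refinement of $\pi_2$ that copies $\pi'$ on the support of $m$ and acts arbitrarily but consistently with $\pi_2$ elsewhere, then invoke Proposition~\ref{prop:policies induce occupancy measure} to identify the induced occupancy measures. The one refinement is that you explicitly establish the reachability fact $\mathrm{supp}_{\cS}(m^{\pi'})\subseteq S_{+}$, which the paper's proof uses implicitly when it concludes that agreement of $\pidet^{1}$ and $\pidet^{2}$ on $\cS\setminus\cS_{0}$ already forces their induced occupancy measures to coincide.
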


\begin{proof}
Let $\mathcal{S}_{0}=\{s\in\mathcal{S}\mid\sum_{a\in\mathcal{A}}m(s,a)=0\}$. Observe that for any $\pi_{1},\pi_{2}\in\Pi(m)$, 
\begin{equation}
\pi_{1}(s,a)=\pi_{2}(s,a)=\frac{m(s,a)}{\sum_{a}m(s,a)}\quad \forall s\in\mathcal{S}\setminus\mathcal{S}_{0}.\label{eq:pi equal}
\end{equation}
Suppose a policy $\pidet^{1}\in\Pidet(\pi_{1})$ induces the occupancy measure $\mdet^{1}\in\Mdet(\pi_{1})$. Consider another policy $\pidet^{2}$ defined by
\begin{equation}
\pidet^{2}(s,\cdot)\triangleq\begin{cases}
\pidet^{1}(s,\cdot) & \text{if }s\in\mathcal{S}\setminus\mathcal{S}_{0},\\
\pidet(s,\cdot) & \text{if }s\in\mathcal{S}_{0},
\end{cases}\label{eq:def of pi2}
\end{equation}
where $\pidet$ is an arbitrary deterministic policy in $\Pidet(\pi_{2})$. Since $\pidet^{1}$ and $\pidet$ are both deterministic policies, it is not hard to see that $\pidet^{2}$ is also a deterministic policy. 

We shall first show that $\pidet^{2}\in\Pidet(\pi_{2})$. By definition, it suffices to show that $\pidet^{2}(s,a)=0$ when $\pi_{2}(s,a)=0$. When $\pi_{2}(s,a)=0$ and $s\in\mathcal{S}_{0}$, it holds that $\pidet(s,a)=0$ by the definition of $\Pidet(\pi_{2})$, which implies that $\pidet^{2}(s,a)=0$. On the other hand, when $\pi_{2}(s,a)=0$ and $s\in\mathcal{S}\setminus\mathcal{S}_{0}$, it follows from \eqref{eq:pi equal} that $\pi_{1}(s,a)=0$. From the definition of $\Pidet(\pi_{1})$, we know $\pidet^{1}(s,a)=0$, which implies that $\pidet^{2}(s,a)=\pidet^{1}(s,a)=0$ according to \eqref{eq:def of pi2}. 

Meanwhile, since $\pidet^{2}(s,\cdot)=\pidet^{1}(s,\cdot)$ when $s\in\mathcal{S}\setminus\mathcal{S}_{0}$, it follows from Proposition~\ref{prop:policies induce occupancy measure} that the occupancy measures induced by $\pidet^{2}$ and $\pidet^{1}$ are equal. Because $\pidet^{1}$ is selected arbitrarily from $\Pidet(\pi_{1})$, it follows that $\Mdet(\pi_{1})\subseteq\Mdet(\pi_{2})$. The same procedure can be used to show $\Mdet(\pi_{2})\subseteq\Mdet(\pi_{1})$, which completes the proof.
\end{proof}
\begin{lem}
\label{lem:Deterministic occupancy measures are verticies}$\Mdet$ consists of all the vertices of $\cM$.
\end{lem}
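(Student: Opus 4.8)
The plan is to prove the two inclusions $\Mdet\subseteq\{\text{vertices of }\cM\}$ and $\{\text{vertices of }\cM\}\subseteq\Mdet$ for the polyhedron $\cM=\{m\succeq0:Am=\rho\}$, using the standard polyhedral criterion: a point $m\in\cM$ is a vertex (equivalently, an extreme point) if and only if there is no $d\neq0$ with $Ad=0$ and $\operatorname{supp}(d)\subseteq\operatorname{supp}(m)$, where $\operatorname{supp}(m)=\{(s,a):m(s,a)>0\}$. I will also repeatedly use the identity $m(s,a)=\pi(s,a)\sum_{a'}m(s,a')$ derived inside the proof of Proposition~\ref{prop:policies induce occupancy measure}, together with the fact that a policy induces a unique occupancy measure via~\eqref{eq: pi to measure}.

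For $\Mdet\subseteq\{\text{vertices}\}$: take $m=m^{\pi}$ with $\pi$ deterministic and suppose $m=\tfrac12(m_{1}+m_{2})$ with $m_{1},m_{2}\in\cM$. Since $\pi$ is deterministic, $m(s,a)=0$ whenever $a\neq\pi(s)$; as $m_{1},m_{2}\succeq0$, this forces $m_{1}(s,a)=m_{2}(s,a)=0$ for $a\neq\pi(s)$ as well, so $m_{1}$ and $m_{2}$ are supported on the graph of $\pi$. Hence $\pi$ has the form~\eqref{eq: measure to pi} with respect to $m_{1}$ and to $m_{2}$ (taking $\pi_0=\pi$ at states where $\sum_{a'}m_i(s,a')=0$), i.e.\ $\pi\in\Pi(m_{1})\cap\Pi(m_{2})$; by uniqueness of the occupancy measure induced by $\pi$ we get $m_{1}=m^{\pi}=m_{2}$, so $m$ is a vertex.

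For $\{\text{vertices}\}\subseteq\Mdet$: let $m$ be a vertex and $\pi\in\Pi(m)$. It suffices to show $\pi$ plays a single action at every visited state (at states with $\sum_{a}m(s,a)=0$ the action is free and may be chosen deterministically), for then $m\in\Mdet$. Assume for contradiction that some $\bar s$ with $\sum_{a}m(\bar s,a)>0$ has $a_{1}\neq a_{2}$ with $m(\bar s,a_{i})>0$. Since $\bar s$ is reachable under $\pi$ and reachability depends only on $\operatorname{supp}(\pi)$, choose $\pi^{(1)}\in\Pidet(\pi)$ that follows a positive-probability simple path from $\{s:\rho(s)>0\}$ to $\bar s$ and sets $\pi^{(1)}(\bar s)=a_{1}$, and let $\pi^{(2)}\in\Pidet(\pi)$ agree with $\pi^{(1)}$ except $\pi^{(2)}(\bar s)=a_{2}$. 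That same path reaches $\bar s$ under $\pi^{(2)}$, so $m^{\pi^{(2)}}(\bar s,a_{2})>0=m^{\pi^{(1)}}(\bar s,a_{2})$; thus $d:=m^{\pi^{(1)}}-m^{\pi^{(2)}}\neq0$ and $Ad=\rho-\rho=0$. Moreover, every state visited by $\pi^{(i)}$ is visited by $\pi$ and the action taken there has positive $\pi$-probability, so $m(s,a)=\pi(s,a)\sum_{a'}m(s,a')$ gives $\operatorname{supp}(m^{\pi^{(i)}})\subseteq\operatorname{supp}(m)$, hence $\operatorname{supp}(d)\subseteq\operatorname{supp}(m)$. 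Then $m\pm\varepsilon d\in\cM$ for small $\varepsilon>0$, contradicting that $m$ is a vertex.

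I expect the delicate point to be exactly this last step: a carelessly chosen refinement $\pi'\in\Pidet(\pi)$ of a policy randomized at $\bar s$ need not actually visit $\bar s$, in which case the two refinements differing only at $\bar s$ would induce the \emph{same} occupancy measure and $d$ would vanish. The fix is to first realize the reachability of $\bar s$ by a concrete simple path and construct both $\pi^{(1)},\pi^{(2)}$ around that path, which guarantees $m^{\pi^{(1)}}\neq m^{\pi^{(2)}}$. The remaining ingredients—the polyhedral vertex criterion and the support bookkeeping $\operatorname{supp}(m^{\pi^{(i)}})\subseteq\operatorname{supp}(m)$—are routine. (Implicitly $\cM$ is a polytope in the settings considered, e.g.\ $\gamma<1$ or absorbing targets, so that ``vertices of $\cM$'' is meaningful.)
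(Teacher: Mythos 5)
Your proof is correct, and the two inclusions split as follows when compared with the paper. For $\Mdet\subseteq\{\text{vertices}\}$ your argument is essentially the paper's: both show that a convex combination equal to $m^{\pi}$ with $\pi$ deterministic forces the summands to be supported on the graph of $\pi$, hence to be induced by $\pi$ itself, hence equal to $m^{\pi}$ by uniqueness of the induced occupancy measure. For the converse inclusion you genuinely diverge: the paper takes a vertex $m\notin\Mdet$, invokes the definition of a vertex as the \emph{unique} maximizer of some linear functional $\langle y,\cdot\rangle$ over $\cM$, and gets an immediate contradiction from the classical fact that every MDP admits a deterministic optimal policy (so some element of $\Mdet$ also attains the maximum). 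You instead work with the extreme-point/basic-feasible-solution criterion and exhibit an explicit feasible direction $d=m^{\pi^{(1)}}-m^{\pi^{(2)}}$ built from two deterministic refinements of a policy randomized at a reachable state, carefully anchored to a positive-probability simple path so that $d\neq0$ and $\operatorname{supp}(d)\subseteq\operatorname{supp}(m)$. The paper's route is shorter but leans on an external theorem; yours is self-contained and constructive, and your observation that a carelessly chosen refinement need not visit $\bar s$ (so $d$ could vanish) is exactly the point that makes the naive version of this perturbation argument fail — you handle it correctly. Your parenthetical caveat about $\cM$ being a polytope (so that vertex and extreme point coincide) is a fair point that the paper also only addresses in passing; for $\gamma<1$ boundedness follows from Lemma~\ref{lem: sum_of_oc}.
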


Before the proof, recall that for a polyhedron $P$, a point $x \in P$ is an extreme point if one cannot find two points $y,z\in P$ that are different from $x$, and a constant $\alpha \in (0,1)$ such that $x=\alpha y+(1-\alpha)z$. A point $x\in P$ is a vertex if there exists a vector $c$ such that $\iprod{c}{x}>\iprod{c}{y}$ for all $y\in P \setminus \{x\}$. A polytope is a bounded polyhedron. It is well known that for a polytope, a vertex is equivalent to an extreme point. 
\begin{proof}
Consider any $\mdet\in\Mdet$. We shall first show that $\mdet$ cannot be expressed as any convex combination of two other occupancy measures and hence is an extreme point of $\cM$. Assume, for the sake of contradiction, that 
\[
\mdet=\alpha m_{1}+(1-\alpha)m_{2}
\]
for some $m_{1},m_{2}\in\mathcal{M}\setminus\{\mdet\}$, where $m_{1}\neq m_{2}$, and $\alpha\in(0,1)$. Denote by $\pidet$, $\pi_{1}$, and $\pi_{2}$ any policies that induce $\mdet$, $m_{1}$, and $m_{2}$, respectively. Consider any $s\in\mathcal{S}$ such that $\sum_{a\in\mathcal{A}}\mdet(s,a)\neq0$. Because $\pidet$ is a deterministic policy, there exists a unique action $a^{\star}\in\mathcal{A}$ such that $\pidet(s,a^{\star})=1$ and $\pidet(s,a)=0$ for all $a\in\mathcal{A}\setminus\{a^{\star}\}$. Consider any $a\in\mathcal{A}\setminus\{a^{\star}\}$. Since $\pidet(s,a)=\mdet(s,a)/\sum_{a}\mdet(s,a)$, it follows that $\mdet(s,a)=0$. Because $m_{1},m_{2}\succeq0$, this implies $m_{1}(s,a)=m_{2}(s,a)=0$. It follows that $\pi_{1}$ and $\pi_{2}$ satisfy $\pi_{1}(s,a)=\pi_{2}(s,a)=0$. Since $\sum_{a'\in\mathcal{A}}\pi_{i}(s,a')=1$ for $i\in\{1,2\}$, it must hold that $\pi_{1}(s,a^{\star})=\pi_{2}(s,a^{\star})=1$, which implies that $\pi_{1}(s,a)=\pi_{2}(s,a)=\pidet(s,a)$ for all $a\in\mathcal{A}$. Since $s\in\mathcal{S}$ is arbitrarily selected, we know that $\pi_{1}(s,a)=\pi_{2}(s,a)=\pidet(s,a)$ holds for all $(s,a)$ such that $\sum_{a\in\mathcal{A}}\mdet(s,a)\neq0$. From Proposition~\ref{prop:policies induce occupancy measure}, the two policies $\pi_{1},\pi_{2}$ induce the same occupancy measures as $\pidet$, i.e., $m_{1}=m_{2}=\mdet$, which leads to a contradiction.

Suppose some $m\notin\Mdet$ is a vertex of $\mathcal{M}$. From the definition of vertices, there is a vector $y$, viewed as a reward function, such that $m$ is the unique occupancy measure that achieves the maximum expected reward. However, this is impossible since for any reward function in an MDP, there must exist a deterministic optimal policy.
\end{proof}

\subsection{Expected payoff and occupancy measure\label{subsec:Proof-of expected reward }}
\begin{lem}
Let $x$ be a reward allocation. Suppose the follower uses a policy $\pi$, which induces an occupancy measure $m$. The expected payoff of the follower satisfies $\mathbb{E}_{s\sim\rho}\left[V_{2}^{\pi}(s;x)\right]=\langle r_{2}^{x},m\rangle$.
\label{Expected-accumulated-reward}
\end{lem}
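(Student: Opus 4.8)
The plan is to unfold the definition of the follower's value function and then interchange the order of the discounted time sum with the sum over state--action pairs. Concretely, starting from $V_{2}^{\pi}(s;x)=\mathbb{E}_{\pi}\left[\sum_{t=0}^{\infty}\gamma^{t}r_{2}^{x}(s_{t},a_{t})\mid s_{0}=s\right]$, I would take the expectation over $s\sim\rho$ to obtain $\mathbb{E}_{s\sim\rho}\left[V_{2}^{\pi}(s;x)\right]=\mathbb{E}_{\pi,\,s_{0}\sim\rho}\left[\sum_{t=0}^{\infty}\gamma^{t}r_{2}^{x}(s_{t},a_{t})\right]$. Expanding the inner expectation over the finite spaces $\cS$ and $\cA$ gives $\sum_{t=0}^{\infty}\gamma^{t}\sum_{(s,a)\in\SAset}r_{2}^{x}(s,a)\,\mathbb{P}(s_{t}=s,a_{t}=a)$.

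The key step is to swap the sum over $t$ with the sum over $(s,a)$, which yields $\sum_{(s,a)\in\SAset}r_{2}^{x}(s,a)\sum_{t=0}^{\infty}\gamma^{t}\mathbb{P}(s_{t}=s,a_{t}=a)=\sum_{(s,a)\in\SAset}r_{2}^{x}(s,a)\,m(s,a)=\langle r_{2}^{x},m\rangle$, where the second equality is exactly the definition of the occupancy measure $m$ induced by $\pi$ in~(\ref{eq: pi to measure}). The interchange is justified because $\cS$ and $\cA$ are finite, $r_{2}^{x}$ is bounded on $\SAset$, and $\sum_{t=0}^{\infty}\gamma^{t}\mathbb{P}(s_{t}=s,a_{t}=a)$ converges absolutely for every $(s,a)$ --- the same absolute-convergence fact already used in the proof of Proposition~\ref{prop:policies induce occupancy measure} --- so Fubini/Tonelli applies to this (finite) double sum.

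The only point worth flagging is the convergence of $\sum_{t=0}^{\infty}\gamma^{t}\mathbb{P}(s_{t}=s,a_{t}=a)$ when $\gamma=1$: there one must assume, as is standard for undiscounted (e.g.\ episodic or absorbing) MDPs and as is implicitly assumed throughout the paper, that these sums are finite; under that assumption the argument above goes through verbatim. When $\gamma<1$, the trivial bound $\sum_{t=0}^{\infty}\gamma^{t}\mathbb{P}(s_{t}=s,a_{t}=a)\le 1/(1-\gamma)$ makes the convergence automatic. In either case the proof reduces to a one-line reordering of summations once the definitions are written out, so no real obstacle is expected.
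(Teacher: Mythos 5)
Your proposal is correct and follows essentially the same route as the paper's proof: unfold the definition of $V_{2}^{\pi}$, expand the expectation over $\cS\times\cA$, and interchange the time sum with the state--action sum using absolute convergence to recognize the occupancy measure from~(\ref{eq: pi to measure}). The extra remark about the $\gamma=1$ case is a reasonable caveat but does not change the argument.
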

\begin{proof}
The expected cumulative reward of the follower satisfies 

\begin{align*}
\mathbb{E}_{s_{0}\sim\rho}\left[V_{2}^{\pi}(s)\right] & =\mathbb{E}_{\pi,s_{0}\sim\rho}\left[\sum_{t=0}^{\infty}\gamma^{t}r_{2}^{x}(s_{t},a_{t})\right]\\
 & =\mathbb{E}_{\pi,s_{0}\sim\rho}\left[\sum_{t=0}^{\infty}\sum_{(s,a)\in\mathcal{S}\times\mathcal{A}}\gamma^{t}\mathbb{P}(s_{t}=s,a_{t}=a)r_{2}^{x}(s,a)\right]\\
 & =\sum_{(s,a)\in\mathcal{S}\times\mathcal{A}}\mathbb{E}_{\pi,s_{0}\sim\rho}\left[\sum_{t=0}^{\infty}\gamma^{t}\mathbb{P}(s_{t}=s,a_{t}=a)\right]r_{2}^{x}(s,a)\\
 & =\sum_{(s,a)\in\mathcal{S}\times\mathcal{A}}m(s,a)r_{2}^{x}(s,a).
\end{align*}
The exchangeability of the summation is from the absolute convergence of $\sum_{t=0}^{\infty}\gamma^{t}\mathbb{P}(s_{t}=s,a_{t}=a)$.
\end{proof}

\subsection{Useful properties of occupancy measures}
\begin{lem}
\label{lem: sum_of_oc}Let $m$ be an occupancy measure. Then $\sum_{(s,a)\in\mathcal{S}\times\mathcal{A}}m(s,a)=1/(1-\gamma)$.
\end{lem}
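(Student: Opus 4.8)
The plan is to prove that $\sum_{(s,a)\in\mathcal{S}\times\mathcal{A}}m(s,a) = 1/(1-\gamma)$ for any occupancy measure $m$, which is Lemma~\ref{lem: sum_of_oc}. I would offer two routes and pick the cleaner one. The first route works directly from the probabilistic definition~\eqref{eq: pi to measure}: for any policy $\pi$ inducing $m$, we have
\[
\sum_{(s,a)\in\mathcal{S}\times\mathcal{A}}m(s,a) = \sum_{(s,a)}\mathbb{E}_{\pi,s_0\sim\rho}\left[\sum_{t=0}^{\infty}\gamma^t \mathbb{P}(s_t=s,a_t=a)\right].
\]
Interchanging the (absolutely convergent) sums and using that $\sum_{(s,a)}\mathbb{P}(s_t=s,a_t=a)=1$ for every $t$, the right-hand side collapses to $\sum_{t=0}^{\infty}\gamma^t = 1/(1-\gamma)$ since $\gamma\in(0,1)$. (I should note that Lemma~\ref{lem: sum_of_oc} implicitly requires $\gamma<1$, consistent with the standing assumption in most of the paper; the geometric series diverges at $\gamma=1$.)

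The second route, which I would probably prefer for self-containedness within the occupancy-measure formalism, works purely from the flow constraint~\eqref{eq:flow_constraint}, i.e.\ $Am=\rho$. Summing~\eqref{eq:flow_constraint} over all $s\in\mathcal{S}$ gives
\[
\sum_{s\in\mathcal{S}}\sum_{a\in\mathcal{A}}m(s,a) = \sum_{s\in\mathcal{S}}\rho(s) + \gamma\sum_{s\in\mathcal{S}}\sum_{s',a'}\mathcal{T}(s',a',s)\,m(s',a').
\]
Since $\rho$ is a probability distribution, $\sum_{s}\rho(s)=1$. For the second term, swap the order of summation and use that $\sum_{s\in\mathcal{S}}\mathcal{T}(s',a',s)=1$ for every $(s',a')$ (the transition kernel is a probability distribution over next states), so that term equals $\gamma\sum_{s',a'}m(s',a')$. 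Writing $T\deq\sum_{(s,a)}m(s,a)$, we obtain $T = 1 + \gamma T$, hence $T = 1/(1-\gamma)$.

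The only mild subtlety to flag is the interchange of infinite sums in the first route, which is justified by absolute (nonnegative) convergence exactly as invoked in the proofs of Proposition~\ref{prop:policies induce occupancy measure} and Lemma~\ref{Expected-accumulated-reward}; in the second route there is no such issue since all sums are finite. I expect no real obstacle here: the result is essentially a one-line consequence of mass conservation, and the ``main step'' is just recognizing that summing the flow constraint telescopes into the scalar fixed-point equation $T=1+\gamma T$. I would present the second proof as the main argument and perhaps remark that it can alternatively be read off from the definition by summing the geometric series.
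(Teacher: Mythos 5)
Your proposal is correct. Your first route is verbatim the paper's own proof: it sums the defining series \eqref{eq: pi to measure} over $(s,a)$, interchanges the sums by absolute convergence, uses $\sum_{(s,a)}\mathbb{P}(s_t=s,a_t=a)=1$ at each $t$, and evaluates the geometric series. Your second route --- summing the flow constraint \eqref{eq:flow_constraint} over $s\in\mathcal{S}$ to get the scalar fixed-point equation $T=1+\gamma T$ --- is a genuinely different argument that the paper does not use. It buys you two things: all sums are finite, so no interchange-of-limits justification is needed, and the proof lives entirely inside the linear-algebraic description $\mathcal{M}=\{m\mid m\succeq 0,\ Am=\rho\}$ rather than the trajectory-level definition. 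The only cost is that it leans on the asserted (but not proved in this paper) equivalence between the probabilistic definition of an occupancy measure and membership in $\mathcal{M}$; the paper's route needs only the definition. Your remark that the lemma implicitly requires $\gamma<1$ is also well taken, since the preliminaries allow $\gamma\in(0,1]$ and the identity degenerates at $\gamma=1$; the paper does not flag this.
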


\begin{proof}
By the definition of occupancy measures,
\[
m(s,a)=\mathbb{E}_{s_{0}\sim\rho}\left[\sum_{t=0}^{\infty}\gamma^{t}\mathbb{P}(s_{t}=s,a_{t}=a\mid s_{0})\right].
\]
Sum over the states and actions to obtain
\begin{align*}
\sum_{(s,a)\in\mathcal{S}\times\mathcal{A}}m(s,a) & =\sum_{(s,a)\in\mathcal{S}\times\mathcal{A}}\mathbb{E}_{s_{0}\sim\rho}\left[\sum_{t=0}^{\infty}\gamma^{t}\mathbb{P}(s_{t}=s,a_{t}=a\mid s_{0})\right]\\
 & =\mathbb{E}_{s_{0}\sim\rho}\left[\sum_{(s,a)\in\mathcal{S}\times\mathcal{A}}\sum_{t=0}^{\infty}\gamma^{t}\mathbb{P}(s_{t}=s,a_{t}=a\mid s_{0})\right]\\
 & =\mathbb{E}_{s_{0}\sim\rho}\left[\sum_{t=0}^{\infty}\gamma^{t}\sum_{(s,a)\in\mathcal{S}\times\mathcal{A}}\mathbb{P}(s_{t}=s,a_{t}=a\mid s_{0})\right]\\
 & =\mathbb{E}_{s_{0}\sim\rho}\left[\sum_{t=0}^{\infty}\gamma^{t}\right]\\
 & =\frac{1}{1-\gamma}.
\end{align*}
It was possible to change the order of summation because $\sum_{t=0}^{\infty}\gamma^{t}\mathbb{P}(s_{t}=s,a_{t}=a\mid s_{0})\leq1/(1-\gamma)$, which is an absolutely convergent series. 
\end{proof}

\section{Proofs on the robustness to nonunique best response}

\subsection{Proof of Proposition~\ref{prop:robustness_nonunique}\label{subsec: proof of Robustness 1}}
\begin{lem}
\label{lem:OC difference}Let $v$ be a vector in $\mathbb{R}^{\vert\mathcal{S}_{d}\vert}$. Suppose $\bar{x}$ is an interior point of an allocation region $\mathcal{P}_{m}$, i.e., $\bar{x}+cv\in\mathcal{P}_{m}$ for some constant $c>0$ and for all $\Vert v\Vert_{1}\leq1$. For any $m'\in\mathcal{M}$, it holds that
\[
\iprod{r_{2}^{\bar{x}}}{m-m'}\geq c\cdot\max_{s\in\mathcal{S}_{d}}\left|\sum_{a\in\cA}\left(m(s,a)-m'(s,a)\right)\right|.
\]
\end{lem}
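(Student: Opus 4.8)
The plan is to exploit the interior-point hypothesis by testing the membership $\bar{x}+cv\in\mathcal{P}_{m}$ against a single, well-chosen direction $v$ tailored to the given $m'$. Recall from Definition~\ref{def:allocation_region} that $x\in\mathcal{P}_{m}$ means $\iprod{r_{2}^{x}}{m}\geq\iprod{r_{2}^{x}}{m''}$ for every $m''\in\mathcal{M}$, and that $r_{2}^{x}(s,a)=r(s,a)+x(s)$ under the convention $x(s)=0$ for $s\notin\mathcal{S}_{d}$. The only computation needed is the elementary identity, valid for any allocations $x,x'$ and occupancy measures $m,m'$,
\[
\iprod{r_{2}^{x}}{m-m'}-\iprod{r_{2}^{x'}}{m-m'}=\sum_{s\in\mathcal{S}_{d}}(x(s)-x'(s))\Bigl(\sum_{a\in\mathcal{A}}m(s,a)-\sum_{a\in\mathcal{A}}m'(s,a)\Bigr),
\]
which holds because the $r$-contribution and all terms for $s\notin\mathcal{S}_{d}$ cancel.

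Next I would fix an arbitrary $m'\in\mathcal{M}$ and pick $s^{\star}\in\mathcal{S}_{d}$ attaining $\max_{s\in\mathcal{S}_{d}}\bigl|\sum_{a\in\mathcal{A}}(m(s,a)-m'(s,a))\bigr|$. If this maximum is zero there is nothing to prove, since $\bar{x}\in\mathcal{P}_{m}$ (take $v=0$) already yields $\iprod{r_{2}^{\bar{x}}}{m-m'}\geq0$. Otherwise, let $v\in\mathbb{R}^{|\mathcal{S}_{d}|}$ be supported on the single coordinate $s^{\star}$, with $v(s^{\star})\in\{+1,-1\}$ chosen so that $v(s^{\star})\bigl(\sum_{a}m'(s^{\star},a)-\sum_{a}m(s^{\star},a)\bigr)=\bigl|\sum_{a}(m(s^{\star},a)-m'(s^{\star},a))\bigr|$. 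Then $\|v\|_{1}=1$, so the interior-point hypothesis gives $\bar{x}+cv\in\mathcal{P}_{m}$, hence $\iprod{r_{2}^{\bar{x}+cv}}{m}\geq\iprod{r_{2}^{\bar{x}+cv}}{m'}$, i.e.\ $\iprod{r_{2}^{\bar{x}+cv}}{m-m'}\geq0$.

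Finally, applying the identity with $x=\bar{x}+cv$ and $x'=\bar{x}$ (so $x(s)-x'(s)=cv(s)$) rearranges $\iprod{r_{2}^{\bar{x}+cv}}{m-m'}\geq0$ into
\[
\iprod{r_{2}^{\bar{x}}}{m-m'}\geq c\sum_{s\in\mathcal{S}_{d}}v(s)\Bigl(\sum_{a\in\mathcal{A}}m'(s,a)-\sum_{a\in\mathcal{A}}m(s,a)\Bigr)=c\,\Bigl|\sum_{a\in\mathcal{A}}(m(s^{\star},a)-m'(s^{\star},a))\Bigr|,
\]
where the last equality uses that $v$ is supported on $s^{\star}$ together with the sign choice above; by the definition of $s^{\star}$ the right-hand side equals $c\max_{s\in\mathcal{S}_{d}}\bigl|\sum_{a}(m(s,a)-m'(s,a))\bigr|$, which is the claim. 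I do not anticipate any real obstacle; the only points requiring care are the bookkeeping around the convention $x(s)=0$ off $\mathcal{S}_{d}$ and selecting the sign of $v(s^{\star})$ so that the right-hand side comes out nonnegative rather than nonpositive.
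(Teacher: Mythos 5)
Your proof is correct and takes essentially the same approach as the paper: both test the membership $\bar{x}+cv\in\mathcal{P}_{m}$ using the decomposition $r_{2}^{\bar{x}+cv}=r_{2}^{\bar{x}}+c\,\delta^{v}$, the only cosmetic difference being that the paper minimizes over all $\Vert v\Vert_{1}=1$ and then lower-bounds the resulting maximum by the signed indicator of the maximizing state, whereas you plug in that signed indicator directly.
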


\begin{proof}
Consider any $m'\in\mathcal{M}$. It follows from the definition of $\cP_m$ that
\begin{align*}
0 & \leq\min_{\|v\|_{1}=1}\left\{ \iprod{r_{2}^{\bar{x}+cv}}{m}-\iprod{r_{2}^{\bar{x}+cv}}{m'}\right\} \\
 & =\min_{\|v\|_{1}=1}\iprod{r_{2}^{\bar{x}+cv}}{m-m'}\\
 & =\min_{\|v\|_{1}=1}\left\{ \iprod{r_{2}^{\bar{x}}}{m-m'}+c\iprod{\delta^{v}}{m-m'}\right\} \\
 & =\iprod{r_{2}^{\bar{x}}}{m-m'}+c\min_{\|v\|_{1}=1}\iprod{\delta^{v}}{m-m'},
\end{align*}
where $\delta^{v}$ satisfies $\delta^{v}(s,a)=v(s)$ for all $s\in\mathcal{S}_{d}$ and $\delta^{v}(s,a)=0$ otherwise. Therefore, we have
\begin{align*}
\iprod{r_{2}^{\bar{x}}}{m-m'} & \geq-c\min_{\norm{v}_{1}=1}\iprod{\delta^{v}}{m-m'}\\
 & =c\max_{\norm{v}_{1}=1}\iprod{\delta^{v}}{m'-m}\\
 & \geq c\cdot\max_{s\in\mathcal{S}_{d}}\left|\sum_{a\in\cA}\left(m(s,a)-m'(s,a)\right)\right|,
\end{align*}
which completes the proof.
\end{proof}
\begin{proof}[Proof of Proposition~\ref{prop:robustness_nonunique}]
Since $x^{\star}$ is an optimal interior-point allocation, there exists an optimal occupancy measure $m^{\star}$ such that $\xopt$ is an interior point of $\cP_{\mopt}$. By Lemma~\ref{lem:OC difference}, there exists $c>0$ such that
\[
\iprod{r_{2}^{x^{\star}}}{m^{\star}-m}\geq c \cdot \max_{s\in\mathcal{S}_{d}}\left|\sum_{a\in\mathcal{A}}\left(m(s,a)-m^{\star}(s,a)\right)\right|\quad\forall m\in\mathcal{M}.
\]
When $m\in\BR(x^{\star})$, it holds that $\iprod{r_{2}^{x^{\star}}}{m}=\iprod{r_{2}^{x^{\star}}}{m^{\star}}$, i.e., $\iprod{r_{2}^{x^{\star}}}{m^{\star}-m}=0$. We obtain $0\geq c \cdot \max_{s\in\mathcal{S}_{d}}\left|\sum_{a\in\mathcal{A}}\left(m(s,a)-m^{\star}(s,a)\right)\right|.$ This implies 
\[
\sum_{a\in\mathcal{A}}\left(m(s,a)-m^{\star}(s,a)\right)=0\quad\forall s\in\mathcal{S}_{d}.
\]
By the definition of $r_{1}$, it holds that $\iprod{r_{1}}{m-m^{\star}}=\sum_{s\in\mathcal{S}_{d}}\sum_{a\in\mathcal{A}}(m(s,a)-m^{\star}(s,a))=0$, implying $\iprod{r_{1}}{m}=\iprod{r_{1}}{m^{\star}}$.
\end{proof}

\subsection{Proof of Proposition~\ref{prop: modified robust check}\label{subsec:Modified robust check}}
\begin{proof}
We first show the \emph{if }direction. Suppose that $x^{\star}$ satisfies $\iprod{r_{1}}{m}=v_{1}^{\star}$ for all $m\in\BR(x^{\star})$. Consider any $\epsilon>0$ and $m$ satisfying $\iprod{r_{2}^{x^{\star}}}{m^{\star}}-\iprod{r_{2}^{x^{\star}}}{m}\leq\epsilon$. Let $b=\langle r_{2}^{x^{\star}},m^{\star}\rangle-\max_{m\in\Mdet\setminus\BR(\xopt)}\langle r_{2}^{x^{\star}},m\rangle$ and $\Mdet^{\star}=\Mdet\cap\BR(x^{\star})$. By Lemma~\ref{lem:Deterministic occupancy measures are verticies}, we can express $m$ as a convex combination of deterministic occupancy measures: $m=\sum_{i=1}^{h}\lambda_{i}m_{i}+\sum_{j=1}^{l}\lambda_{j}m_{j}'$, where $m_{i}\in\Mdet^{\star}$ for $i=1,2,\ldots,h$, $m_{j}'\in\Mdet\setminus\Mdet^{\star}$ for $j=1,2,\ldots,l$, $\lambda_{i},\lambda_{j}\geq0$ and $\sum_{i=1}^{h}\lambda_{i}+\sum_{j=1}^{l}\lambda_{j}=1$. For any $i \in \{1,2,\dots,h\}$, because $m_{i}\in\Mdet^{\star}$, it holds that $\iprod{r_{2}^{x^{\star}}}{m_{i}}=\iprod{r_{2}^{x^{\star}}}{m^{\star}}$. For any $j=\{1,2,\dots,l\}$, because $m_{j}'\in\Mdet\setminus\Mdet^{\star}=\Mdet\setminus\BR(\xopt)$, it holds that $\iprod{r_{2}^{x^{\star}}}{m^{\star}}-\iprod{r_{2}^{x^{\star}}}{m_{j}'}\geq b$. Therefore,
\begin{align*}
\iprod{r_{2}^{x^{\star}}}{m^{\star}}-\iprod{r_{2}^{x^{\star}}}{m} & =\iprod{r_{2}^{x^{\star}}}{m^{\star}}-\iprod{r_{2}^{x^{\star}}}{\sum_{i=1}^{h}\lambda_{i}m_{i}}-\iprod{r_{2}^{x^{\star}}}{\sum_{j=1}^{l}\lambda_{j}m_{j}'}\\
 & =\iprod{r_{2}^{x^{\star}}}{m^{\star}}-\sum_{i=1}^{h}\lambda_{i}\iprod{r_{2}^{x^{\star}}}{m_{i}}-\sum_{j=1}^{l}\lambda_{j}\iprod{r_{2}^{x^{\star}}}{m_{j}'}\\
 & =\sum_{i=1}^{h}\lambda_{i}\iprod{r_{2}^{x^{\star}}}{m^{\star}-m_{i}}+\sum_{j=1}^{l}\lambda_{j}\iprod{r_{2}^{x^{\star}}}{m^{\star}-m_{j}'}\\
 & =\sum_{j=1}^{l}\lambda_{j}\iprod{r_{2}^{x^{\star}}}{m^{\star}-m_{j}'}\\
 & \geq b\cdot\sum_{j=1}^{l}\lambda_{j}.
\end{align*}
This implies that $\epsilon\geq b\cdot\sum_{j=1}^{l}\lambda_{j}$, i.e., $\sum_{j=1}^{l}\lambda_{j}\leq \epsilon/b$. In the meantime, because $m_{i} \in \in\Mdet^{\star}$, it follows from the given assumption that $\iprod{r_{1}}{m_{i}}=v_{1}^{\star}=\iprod{r_{1}}{m^{\star}}$. Thus,
\begin{align*}
\iprod{r_{1}}{m^{\star}}-\iprod{r_{1}}{m} & =\iprod{r_{1}}{m^{\star}}-\iprod{r_{1}}{\sum_{i=1}^{h}\lambda_{i}m_{i}}-\iprod{r_{1}}{\sum_{j=1}^{l}\lambda_{j}m_{j}'}\\
 & =\sum_{i=1}^{h}\lambda_{i}\iprod{r_{1}}{m^{\star}-m_{i}}+\sum_{j=1}^{l}\lambda_{j}\iprod{r_{1}}{m^{\star}-m_{j}'}\\
 & =\sum_{j=1}^{l}\lambda_{j}\iprod{r_{1}}{m^{\star}-m_{j}'}.
\end{align*}
When $\epsilon \to 0^{+}$, it follows from $\sum_{j=1}^{l}\lambda_{j}\leq \epsilon/b$ that $\sum_{j=1}^{l}\lambda_{j} \to 0^{+}$, which further implies that $\iprod{r_{1}}{m^{\star}}-\iprod{r_{1}}{m} \to 0^{+}$. Since $m$ is selected arbitrarily such that $\iprod{r_{2}^{x^{\star}}}{m^{\star}}-\iprod{r_{2}^{x^{\star}}}{m}\leq\epsilon$, this implies that $v_{1}^{\star}-v_{\epsilon}(x^{\star})\rightarrow0^{+}$ when $\epsilon\rightarrow0^{+}$.

For the \emph{only if }direction, we assume for the sake of contradiction that there exists $m\in\BR(x^{\star})$ such that $\iprod{r_{1}}{m^{\star}}-\iprod{r_{1}}{m}=c>0$. Then $m$ is a feasible solution to (\ref{eq: modified robust check}), which implies $v_{1}^{\star}-v_{\epsilon}(x^{\star})\geq c$ for any $\epsilon>0$. This contradicts the given assumption that $v_{1}^{\star}-v_{\epsilon}(x^{\star})\rightarrow0^{+}$ when $\epsilon\rightarrow0^{+}$.
\end{proof}

\section{Proofs on the robustness to bounded rationality}

\subsection{Model of bounded rationality\label{subsec:Transformation-of-Entropy-Regulared}}


In the original entropy-regularized MDP, the follower chooses a policy $\pi$ to maximize
\[
V_{\tau}^{\pi}(\rho)\triangleq V^{\pi}(\rho)+\tau\cdot\mathcal{H}(\rho,\pi),
\]
where $V^{\pi}(\rho)=\mathbb{E}_{s\sim\rho}\left[V_{2}^{\pi}(s)\right]$ and
\[
\mathcal{H}(\rho,\pi) \triangleq\mathbb{E}_{\pi, s_{0}\sim\rho}\left[\sum_{t=0}^{\infty}-\gamma^{t}\log\pi(s_{t},a_{t})\right].
\]
Let $m$ be the occupancy measure induced by $\pi$. It then holds that
\[
\mathcal{H}(\rho,\pi) = -\sum_{(s,a)\in\mathcal{S}\times\mathcal{A}}m(s,a)\log\pi(s,a)=-\sum_{(s,a)\in\mathcal{S}\times\mathcal{A}}m(s,a)\log\frac{m(s,a)}{\sum_{a'\in\mathcal{A}}m(s,a')},
\]
where the last equality follows from Proposition~\ref{prop:policies induce occupancy measure}.
Combine Lemma~\ref{Expected-accumulated-reward} to obtain the payoff function in~\eqref{eq:standard ent-regularized}.

\begin{proof}[Proof of Proposition~\ref{prop:robustness of ent-regularized-1}]
Notice that 
\begin{align*}
 & -\sum_{(s,a)\in\mathcal{S}\times\mathcal{A}}\left(m^{\star}(s,a)\log\frac{m^{\star}(s,a)}{\sum_{a'\in\mathcal{A}}m^{\star}(s,a')}\right)\\
 & \qquad = -\sum_{(s,a)\in\mathcal{S}\times\mathcal{A}}\left(\left(\sum_{a\in\mathcal{A}}m^{\star}(s,a)\right)\left(\frac{m^{\star}(s,a)}{\sum_{a'\in\mathcal{A}}m^{\star}(s,a')}\right)\log\frac{m^{\star}(s,a)}{\sum_{a'\in\mathcal{A}}m^{\star}(s,a')}\right)\\
& \qquad = -\sum_{s\in\mathcal{S}}\left(\vert\mathcal{A}\vert\sum_{a\in\mathcal{A}}m^{\star}(s,a)\right)\sum_{a\in\mathcal{A}}\left(\left(\frac{m^{\star}(s,a)}{\sum_{a'\in\mathcal{A}}m^{\star}(s,a')}\right)\log\frac{m^{\star}(s,a)}{\sum_{a'\in\mathcal{A}}m^{\star}(s,a')}\right)\\
& \qquad \leq \sum_{s\in\mathcal{S}}\left(\vert\mathcal{A}\vert\sum_{a\in\mathcal{A}}m^{\star}(s,a)\cdot\frac{1}{\vert\mathcal{A}\vert}\log\vert\mathcal{A}\vert\right)\\
& \qquad = \sum_{s\in\mathcal{S}}\sum_{a\in\mathcal{A}}m^{\star}(s,a)\log\vert\mathcal{A}\vert\\
& \qquad = \frac{1}{1-\gamma}\log\vert\mathcal{A}\vert.
\end{align*}
This implies
\begin{equation}
\begin{aligned}
\tau\cdot\left(\sum_{(s,a)\in\mathcal{S}\times\mathcal{A}}m^{\star}(s,a)\log\frac{m^{\star}(s,a)}{\sum_{a'\in\mathcal{A}}m^{\star}(s,a')}-\sum_{(s,a)\in\mathcal{S}\times\mathcal{A}}m_{\tau}^{\star}(s,a)\log\frac{m_{\tau}^{\star}(s,a)}{\sum_{a'\in\mathcal{A}}m_{\tau}^{\star}(s,a')}\right)\\
\leq2\tau\cdot\frac{1}{1-\gamma}\cdot\log\vert\mathcal{A}\vert.
\end{aligned}
\label{eq: inequality of entropy regularized part 1}
\end{equation}
It follows from the optimality of $m_{\tau}^{\star}$ that
\begin{equation}
\begin{aligned}
\tau\cdot\left(\sum_{(s,a)\in\mathcal{S}\times\mathcal{A}}m^{\star}(s,a)\log\frac{m^{\star}(s,a)}{\sum_{a'\in\mathcal{A}}m^{\star}(s,a')}-\sum_{(s,a)\in\mathcal{S}\times\mathcal{A}}m_{\tau}^{\star}(s,a)\log\frac{m_{\tau}^{\star}(s,a)}{\sum_{a'\in\mathcal{A}}m_{\tau}^{\star}(s,a')}\right)\\
\geq\iprod{r_{2}^{x^{\star}}}{m^{\star}}-\iprod{r_{2}^{x^{\star}}}{m_{\tau}^{\star}}.
\end{aligned}
\label{eq: inequality of entropy regularized part 2}
\end{equation}
Combine \eqref{eq: inequality of entropy regularized part 1} and \eqref{eq: inequality of entropy regularized part 2} to obtain
\begin{equation}
\iprod{r_{2}^{x^{\star}}}{m^{\star}}-\iprod{r_{2}^{x^{\star}}}{m_{\tau}^{\star}}\leq2\tau\cdot\frac{1}{1-\gamma}\cdot\log\vert\mathcal{A}\vert.
\label{eq: inequality of entropy regularized part 3}
\end{equation}

Let $h = \vert\Mdet^{\star}\vert$ and $l = \vert\Mdet\setminus\Mdet^{\star}\vert$. Since $m_{\tau}^{\star} \in \cM$, according to Lemma~\ref{lem:Deterministic occupancy measures are verticies}, we can write $m_{\tau}^{\star}=\sum_{i=1}^{h}\lambda_{i}m_{i}+\sum_{j=1}^{l}\lambda_{j}m_{j}'$, where $m_{i}\in\Mdet^{\star}$ for $i=1,2,\ldots,h$, $m_{j}'\in\Mdet\setminus\Mdet^{\star}$ for $j=1,2,\ldots,l$, $\lambda_{i},\lambda_{j}\geq0$ and $\sum_{i=1}^{h}\lambda_{i}+\sum_{j=1}^{l}\lambda_{j}=1$. We can rewrite the left side of (\ref{eq: inequality of entropy regularized part 3}) as 
\[
\begin{aligned}\iprod{r_{2}^{x^{\star}}}{m^{\star}}-\iprod{r_{2}^{x^{\star}}}{m_{\tau}^{\star}} & =\iprod{r_{2}^{x^{\star}}}{m^{\star}}-\iprod{r_{2}^{x^{\star}}}{\sum_{i=1}^{h}\lambda_{i}m_{i}+\sum_{j=1}^{l}\lambda_{j}m_{j}'}\\
 & =(1-\sum_{i=1}^{h}\lambda_{i})\iprod{r_{2}^{x^{\star}}}{m^{\star}}-\sum_{j=1}^{l}\lambda_{j}\iprod{r_{2}^{x^{\star}}}{m_{j}'}\\
 & =\sum_{j=1}^{l}\lambda_{j}\iprod{r_{2}^{x^{\star}}}{m^{\star}}-\sum_{j=1}^{l}\lambda_{j}\iprod{r_{2}^{x^{\star}}}{m_{j}'}\\
 & =\sum_{j=1}^{l}\lambda_{j}\iprod{r_{2}^{x^{\star}}}{m^{\star}-m_{j}'}.
\end{aligned}
\]
The second equality is because $\iprod{r_{2}^{x^{\star}}}{m^{\star}}=\iprod{r_{2}^{x^{\star}}}{m_{i}}$ for $i=1,2,\ldots,h$, which is obtained by the assumption that $m_{i}\in\Mdet^{\star}=\Mdet\cap\BR(x^{\star})\subseteq\BR(x^{\star})$. It then follows that
\begin{equation}
\sum_{j=1}^{l}\lambda_{j}\iprod{r_{2}^{x^{\star}}}{m^{\star}-m_{j}'}\leq2\tau\cdot\frac{1}{1-\gamma}\cdot\log\vert\mathcal{A}\vert.\label{eq:bound of r_2*m}
\end{equation}
Since $\Mdet\setminus\Mdet^{\star}$ is a finite set and hence $\max_{m\in\Mdet\setminus\Mdet^{\star}}\langle r_{2}^{x^{\star}},m\rangle=b$ exists, one can obtain
\begin{align*}
 & b\sum_{j=1}^{l}\lambda_{j}\leq\sum_{j=1}^{l}\lambda_{j}\iprod{r_{2}^{x^{\star}}}{m^{\star}-m_{j}'}.
\end{align*}
Combine with (\ref{eq:bound of r_2*m}) to obtain $b\sum_{j=1}^{l}\lambda_{j}\leq2\tau\cdot\frac{1}{1-\gamma}\cdot\log\vert\mathcal{A}\vert$, i.e., $\sum_{j=1}^{l}\lambda_{j}\leq2\tau\cdot\frac{1}{b(1-\gamma)}\cdot\log\vert\mathcal{A}\vert$. Therefore,
\begin{align*}
\iprod{r_{1}}{m_{\tau}^{\star}} & =\iprod{r_{1}}{\sum_{i=1}^{h}\lambda_{i}m_{i}+\sum_{j=1}^{l}\lambda_{j}m_{j}'}\\
 & =\sum_{i=1}^{h}\lambda_{i}\iprod{r_{1}}{m^{\star}}+\sum_{j=1}^{l}\lambda_{j}\iprod{r_{1}}{m_{j}'}\\
 & \geq\sum_{i=1}^{h}\lambda_{i}\iprod{r_{1}}{m^{\star}}\\
 & =(1-\sum_{j=1}^{l}\lambda_{j})\iprod{r_{1}}{m^{\star}}\\
 & \geq\left(1-2\tau\cdot\frac{1}{b(1-\gamma)}\cdot\log\vert\mathcal{A}\vert\right)\iprod{r_{1}}{m^{\star}}.
\end{align*}
The second equality follows from the assumption that $x^{\star}$ is an optimal interior-point allocation. From Proposition~\ref{prop:robustness_nonunique}, any best response of $x^{\star}$ is an optimal occupancy measure. Thus, $\iprod{r_{1}}{m_{i}}=\iprod{r_{1}}{m^{\star}}$ since $m_{i}\in\BR(x^{\star})$.

\end{proof}

\subsection{Alternative model of bounded rationality\label{subsec: proof of robustness to ent-regularize}}

\begin{prop}
\label{prop:Robustness to ent-regularized}Suppose that $(x^{\star},m^{\star})$ is an optimal solution to the reward design problem in~(\ref{eq:allocation_problem_om}), and $x^{\star}$ is an interior point of $\mathcal{P}_{m^{\star}}$. Define $\Mdet^{\star}\deq\Mdet\cap\BR(x^{\star})$, the set of optimal deterministic occupancy measures under $\xopt$. Let
\[
    m_{\mathrm{\tau}}^{\star}=\argmax_{m\in\cM}\left\{ \iprod{r_{2}^{x^{\star}}}{m}-\tau\cdot \ent(m)\right\}.
\]
Then for any $\tau>0$, it holds that 
\[
\iprod{r_{1}}{m_{\mathrm{\tau}}^\star}\geq\left(1-\frac{2\tau}{b(1-\gamma)}\lvert\log\left(\vert\mathcal{S}\vert\vert\mathcal{A}\vert\cdot(1-\gamma)\right)\rvert\right)\iprod{r_{1}}{m^{\star}},
\]
where $\gamma$ is the discount factor, and $b=\langle r_{2}^{\xopt},m^{\star}\rangle-\max_{m\in\Mdet\setminus\Mdet^{\star}}\langle r_{2}^{\xopt},m\rangle$.
\end{prop}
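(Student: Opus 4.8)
The plan is to follow the proof of Proposition~\ref{prop:robustness of ent-regularized-1} in Appendix~\ref{subsec:Transformation-of-Entropy-Regulared} almost verbatim, the only substantive change being that the bound on the conditional-entropy regularizer $\sum_{(s,a)}m(s,a)\log\frac{m(s,a)}{\sum_{a'}m(s,a')}$ is replaced by an analogous bound on the unnormalized entropy $\ent(m)$. First I would use the optimality of $m_\tau^\star$ for the regularized objective in the statement to get
\[
\iprod{r_2^{\xopt}}{m^\star}-\iprod{r_2^{\xopt}}{m_\tau^\star}\le\tau\bigl(\ent(m^\star)-\ent(m_\tau^\star)\bigr),
\]
so the entire argument hinges on a uniform bound on $\ent(m)$ over $m\in\cM$.

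That bound is the only genuinely new ingredient. Using Lemma~\ref{lem: sum_of_oc}, $\sum_{(s,a)}m(s,a)=1/(1-\gamma)$, so the rescaled vector $p\deq(1-\gamma)m$ is a probability distribution on $\cS\times\cA$, and a one-line computation gives $\ent(m)=\tfrac{1}{1-\gamma}\bigl(H(p)+\log(1-\gamma)\bigr)$, where $H(p)=-\sum_i p_i\log p_i\in[0,\log(|\cS||\cA|)]$ is the Shannon entropy. Substituting the two extremes of $H(p)$ yields $\tfrac{\log(1-\gamma)}{1-\gamma}\le\ent(m)\le\tfrac{\log(|\cS||\cA|(1-\gamma))}{1-\gamma}$, hence $\ent(m^\star)-\ent(m_\tau^\star)\le\tfrac{2}{1-\gamma}\lvert\log(|\cS||\cA|(1-\gamma))\rvert$ and therefore $\iprod{r_2^{\xopt}}{m^\star}-\iprod{r_2^{\xopt}}{m_\tau^\star}\le\tfrac{2\tau}{1-\gamma}\lvert\log(|\cS||\cA|(1-\gamma))\rvert$.

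From here the reasoning is a transcription of the second half of the proof of Proposition~\ref{prop:robustness of ent-regularized-1}. By Lemma~\ref{lem:Deterministic occupancy measures are verticies} write $m_\tau^\star=\sum_i\lambda_i m_i+\sum_j\lambda_j m_j'$ with $m_i\in\Mdet^\star$ and $m_j'\in\Mdet\setminus\Mdet^\star$; since $\iprod{r_2^{\xopt}}{m_i}=\iprod{r_2^{\xopt}}{m^\star}$ for each $i$ and $\iprod{r_2^{\xopt}}{m^\star-m_j'}\ge b$ for each $j$, the previous inequality forces $\sum_j\lambda_j\le\tfrac{2\tau}{b(1-\gamma)}\lvert\log(|\cS||\cA|(1-\gamma))\rvert$. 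Finally, because $\xopt$ is an interior point of $\mathcal{P}_{m^\star}$, Proposition~\ref{prop:robustness_nonunique} gives $\iprod{r_1}{m_i}=v_1^\star$ for every $m_i\in\Mdet^\star\subseteq\BR(\xopt)$, while $\iprod{r_1}{m_j'}\ge0$ since $r_1\succeq0$ and $m_j'\succeq0$; combining these yields $\iprod{r_1}{m_\tau^\star}\ge(1-\sum_j\lambda_j)v_1^\star\ge\bigl(1-\tfrac{2\tau}{b(1-\gamma)}\lvert\log(|\cS||\cA|(1-\gamma))\rvert\bigr)v_1^\star$.

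The only nonroutine step is pinning down the entropy bound with exactly the constant appearing in the statement: the range of $\ent(m)$ is two-sided and slightly asymmetric, so I would check that $\lvert\log(|\cS||\cA|(1-\gamma))\rvert$ dominates $\lvert\log(1-\gamma)\rvert$ in the regime of interest (e.g.\ when $|\cS||\cA|\ge(1-\gamma)^{-2}$), which lets both endpoints be folded into the single symmetric constant in the bound; absent that, one would state the result with $\max\{\lvert\log(1-\gamma)\rvert,\lvert\log(|\cS||\cA|(1-\gamma))\rvert\}$ and note that it collapses to the displayed form under that condition. Everything else is mechanical once the optimality inequality and the entropy bound are in place.
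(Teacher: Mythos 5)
Your proposal is correct and takes essentially the same route as the paper: bound the entropy difference uniformly over $\cM$ via Lemma~\ref{lem: sum_of_oc}, then transcribe the convex-decomposition argument from the proof of Proposition~\ref{prop:robustness of ent-regularized-1}. In fact your handling of the one new ingredient is more careful than the paper's, which asserts in a single line that $\ent(m^{\star})-\ent(m_{\tau}^{\star})\leq\lvert\ent(m^{\star})+\ent(m_{\tau}^{\star})\rvert=\frac{2}{1-\gamma}\lvert\log(\vert\cS\vert\vert\cA\vert(1-\gamma))\rvert$; as your rescaling $p=(1-\gamma)m$ shows, $\ent(m)$ ranges over $[\frac{\log(1-\gamma)}{1-\gamma},\frac{\log(\vert\cS\vert\vert\cA\vert(1-\gamma))}{1-\gamma}]$, so the honest uniform bound on the difference involves $\max\{\lvert\log(1-\gamma)\rvert,\lvert\log(\vert\cS\vert\vert\cA\vert(1-\gamma))\rvert\}$ (or, even more simply, the two endpoints differ by exactly $\frac{1}{1-\gamma}\log(\vert\cS\vert\vert\cA\vert)$, which gives a cleaner constant), and the symmetric constant appearing in the statement only dominates under the condition you identify. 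The caveat you flag is therefore a genuine looseness in the paper's own proof rather than a gap in yours; everything else matches.
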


\begin{proof}
Notice that 
\begin{align*}
\tau\cdot\left(\ent(m^{\star})-\ent(m_{\tau}^{\star})\right) & \leq\tau\lvert \ent(m^{\star})+\ent(m_{\tau}^{\star})\rvert\\
 & =2\tau\left(\frac{1}{1-\gamma}\lvert\log\left(\vert\mathcal{S}\vert\vert\mathcal{A}\vert\cdot(1-\gamma)\right)\rvert\right).
\end{align*}
The upper bound of $\ent(m^{\star})+\ent(m_{\tau}^{\star})$ is derived from the facts that $m\in\mathcal{M}$ is non-negative and the sum over states and actions is bounded by $\frac{1}{1-\gamma}$ by Lemma~\ref{lem: sum_of_oc}. The rest of the proof is similar to the proof of Proposition~\ref{prop:robustness of ent-regularized-1}.
\end{proof}

\subsection{$\delta$-optimal response as a model for bounded rationality\label{subsec:delta-optimal-response-as}}

Consider a follower who chooses a worst response for the leader among all the $\delta$-optimal responses. Denote by $\BR_{\delta}(x)\deq\left\{m \mid \max_{m'\in\mathcal{M}}\iprod{r_2^x}{m'}-\iprod{r_2^x}{m}\leq\delta\right\}$ the set of $\delta$-optimal responses under a reward allocation $x$. The payoff of the leader under $x$ is given by
\[
\min_{m\in\BR_{\delta}(x)}\iprod{r_{1}}{m}.
\]
The following proposition shows that an optimal interior-point allocation is robust against $\delta$-optimal responses. 
\begin{prop}
\label{prop:robustness to delta optimal response}Suppose that $(x^{\star},m^{\star})$ is an optimal solution to the reward design problem in~(\ref{eq:allocation_problem_om}), and $x^{\star}$ is an interior point of $\mathcal{P}_{m^{\star}}$. Let $m_{\delta}^{\star}\in\argmin_{m\in\BR_{\delta}(x^{\star})}\iprod{r_{1}}{m}$ and $\Mdet^{\star}=\Mdet\cap\BR(x^{\star})$. Then for any $\delta>0$, the value $\iprod{r_{1}}{m_{\delta}^{\star}}$ satisfies 
\[
\iprod{r_{1}}{m_{\delta}^{\star}}\geq\left(1-\frac{\delta}{b}\right)\iprod{r_{1}}{m^{\star}},
\]
where $b=\langle r_{2}^{x^{\star}},m^{\star}\rangle-\max_{m\in\Mdet\setminus\Mdet^{\star}}\langle r_{2}^{x^{\star}},m\rangle$.
\end{prop}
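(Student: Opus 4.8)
The plan is to follow the proof of Proposition~\ref{prop:robustness of ent-regularized-1} almost verbatim, simply replacing the entropy-based estimate on the follower's suboptimality by the defining inequality of a $\delta$-optimal response. The first step is to record that, since $m_{\delta}^{\star}\in\BR_{\delta}(x^{\star})$, we have $\iprod{r_{2}^{x^{\star}}}{m^{\star}}-\iprod{r_{2}^{x^{\star}}}{m_{\delta}^{\star}}\leq\delta$ (here $m^{\star}\in\BR(x^{\star})$ is used to identify $\max_{m'\in\cM}\iprod{r_{2}^{x^{\star}}}{m'}$ with $\iprod{r_{2}^{x^{\star}}}{m^{\star}}$). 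This single inequality plays exactly the role that the bound $\iprod{r_{2}^{x^{\star}}}{m^{\star}}-\iprod{r_{2}^{x^{\star}}}{m_{\tau}^{\star}}\leq 2\tau\log|\cA|/(1-\gamma)$ plays in the earlier proof, so the value $\delta$ will take the place of $2\tau\log|\cA|/(1-\gamma)$ throughout.

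Next I would decompose $m_{\delta}^{\star}$ into deterministic occupancy measures. By Lemma~\ref{lem:Deterministic occupancy measures are verticies}, write $m_{\delta}^{\star}=\sum_{i=1}^{h}\lambda_{i}m_{i}+\sum_{j=1}^{l}\lambda_{j}m_{j}'$ with $m_{i}\in\Mdet^{\star}$, $m_{j}'\in\Mdet\setminus\Mdet^{\star}$, $\lambda_{i},\lambda_{j}\geq0$, and $\sum_{i}\lambda_{i}+\sum_{j}\lambda_{j}=1$. Since each $m_{i}\in\Mdet^{\star}\subseteq\BR(x^{\star})$, we have $\iprod{r_{2}^{x^{\star}}}{m_{i}}=\iprod{r_{2}^{x^{\star}}}{m^{\star}}$, so the suboptimality telescopes: $\iprod{r_{2}^{x^{\star}}}{m^{\star}}-\iprod{r_{2}^{x^{\star}}}{m_{\delta}^{\star}}=\sum_{j=1}^{l}\lambda_{j}\iprod{r_{2}^{x^{\star}}}{m^{\star}-m_{j}'}$. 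Using the definition of $b$ and that each $m_{j}'\in\Mdet\setminus\Mdet^{\star}$ satisfies $\iprod{r_{2}^{x^{\star}}}{m^{\star}}-\iprod{r_{2}^{x^{\star}}}{m_{j}'}\geq b$, this is at least $b\sum_{j=1}^{l}\lambda_{j}$; combined with the first step this gives $\sum_{j=1}^{l}\lambda_{j}\leq\delta/b$.

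Finally I would transfer to the leader's payoff. Because $x^{\star}$ is an interior point of $\mathcal{P}_{m^{\star}}$, Proposition~\ref{prop:robustness_nonunique} ensures every best response of $x^{\star}$ is optimal for the leader, so $\iprod{r_{1}}{m_{i}}=\iprod{r_{1}}{m^{\star}}=v_{1}^{\star}$ for each $m_{i}\in\Mdet^{\star}$. Dropping the nonnegative terms $\lambda_{j}\iprod{r_{1}}{m_{j}'}$ (valid since $r_{1}\succeq0$ and $m_{j}'\succeq0$),
\[
\iprod{r_{1}}{m_{\delta}^{\star}}=\sum_{i=1}^{h}\lambda_{i}\iprod{r_{1}}{m^{\star}}+\sum_{j=1}^{l}\lambda_{j}\iprod{r_{1}}{m_{j}'}\geq\Bigl(1-\sum_{j=1}^{l}\lambda_{j}\Bigr)\iprod{r_{1}}{m^{\star}}\geq\Bigl(1-\tfrac{\delta}{b}\Bigr)\iprod{r_{1}}{m^{\star}},
\]
which is the claim. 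I do not anticipate a substantive obstacle: the only points requiring a line of care are that $m_{\delta}^{\star}$ exists (the minimum is attained because $\BR_{\delta}(x^{\star})$ is a nonempty polytope) and that $b>0$ so the bound is meaningful — which holds because $x^{\star}$ being an interior point forces $\BR(x^{\star})\cap\Mdet=\Mdet^{\star}$, hence every $m\in\Mdet\setminus\Mdet^{\star}$ gives strictly smaller follower value than $m^{\star}$ (and if $\Mdet\setminus\Mdet^{\star}=\emptyset$ the statement is vacuous). The conceptual content is simply that the $\delta$-optimal condition substitutes cleanly for the entropy estimate used before.
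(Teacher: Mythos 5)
Your proposal is correct and follows exactly the route the paper takes: the paper's own proof consists of noting $\iprod{r_{2}^{x^{\star}}}{m^{\star}}-\iprod{r_{2}^{x^{\star}}}{m_{\delta}^{\star}}\leq\delta$ and then reusing the argument of Proposition~\ref{prop:robustness of ent-regularized-1} verbatim, which is precisely the decomposition into deterministic occupancy measures and the bound $\sum_{j}\lambda_{j}\leq\delta/b$ that you carry out. Your added remarks on the attainment of the minimum and the positivity of $b$ are sensible but not part of the paper's argument.
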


\begin{proof}
Notice that
\[
\iprod{r_{2}^{x^{\star}}}{m^{\star}}-\iprod{r_{2}^{x^{\star}}}{m_{\delta}^{\star}}\leq\delta
\]
according to the definition of $\BR_{\delta}(x^{\star})$. The rest of the proof is similar to the proof of Proposition~\ref{prop:robustness of ent-regularized-1}.
\end{proof}

\section{Proofs on the optimality of deterministic occupancy measures}

\subsection{Proof of Proposition~\ref{lem:optimal_action}\label{subsec:Proof-of-Lemma_optimal_action}}
\begin{lem}
\label{lem:optimal_action-1}Let $\Qopt$ be the optimal Q-function of an MDP, and $\piopt$ be any deterministic greedy policy with respect to $\Qopt$, i.e., for all $s\in\mathcal{S}$, it holds that $\piopt(s,a)=1$ for some $a\in\argmax_{a'\in\mathcal{A}}\Qopt(s,a')$. For any optimal policy $\pi$ (not necessarily deterministic) of the MDP, it holds that
\[
\mathbb{E}_{a\sim\pi(s,\cdot)}\left[Q^{\mathrm{opt}}(s,a)\right]=\max_{a'\in\mathcal{A}}Q^{\mathrm{opt}}(s,a')\quad\forall s\in\mathcal{S}.
\]
\end{lem}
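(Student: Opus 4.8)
The plan is to reduce the claim to two textbook facts about MDPs: the Bellman optimality equation, which says $V^{\mathrm{opt}}(s)=\max_{a\in\cA}\Qopt(s,a)$ for every $s\in\cS$, and the Bellman consistency (policy evaluation) equation, which says $V^{\pi}(s)=\sum_{a\in\cA}\pi(s,a)Q^{\pi}(s,a)$ for every policy $\pi$ and every state $s$, where $Q^{\pi}(s,a)=r(s,a)+\gamma\sum_{s'}\cT(s,a,s')V^{\pi}(s')$. Here ``$\pi$ is an optimal policy'' is taken in the standard pointwise sense, i.e., $V^{\pi}(s)=V^{\mathrm{opt}}(s)$ for all $s\in\cS$, which is precisely what makes the conclusion hold at \emph{every} state rather than only at states visited under $\pi$. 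Note that $\piopt$ itself does not appear in the conclusion; it is introduced only for later use (in the proof of Proposition~\ref{lem:optimal_action}), so no analysis of how the greedy policy breaks ties is needed here.

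First I would show that an optimal $\pi$ has the same action-value function as the optimal one: since $\Qopt(s,a)=r(s,a)+\gamma\sum_{s'}\cT(s,a,s')V^{\mathrm{opt}}(s')$ and $Q^{\pi}(s,a)$ is given by the same formula with $V^{\pi}$ in place of $V^{\mathrm{opt}}$, the identity $V^{\pi}=V^{\mathrm{opt}}$ yields $Q^{\pi}(s,a)=\Qopt(s,a)$ for all $(s,a)\in\cS\times\cA$. Then, fixing an arbitrary $s\in\cS$, I would chain equalities: by Bellman consistency and the previous step, $\mathbb{E}_{a\sim\pi(s,\cdot)}[\Qopt(s,a)]=\sum_{a\in\cA}\pi(s,a)Q^{\pi}(s,a)=V^{\pi}(s)$, which equals $V^{\mathrm{opt}}(s)=\max_{a'\in\cA}\Qopt(s,a')$ by optimality of $\pi$ and the Bellman optimality equation. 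This is exactly the asserted identity.

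I do not anticipate a genuine obstacle. The only points needing care are recording precisely which standard facts are invoked --- the two Bellman equations, both valid under the paper's standing assumptions on $M=(\cS,\cA,\cT,r,\gamma,\rho)$ --- and observing that the inequality $\mathbb{E}_{a\sim\pi(s,\cdot)}[\Qopt(s,a)]\le\max_{a'\in\cA}\Qopt(s,a')$ is automatic because $\pi(s,\cdot)$ is a probability distribution, so all the content lies in the reverse inequality, which is supplied entirely by the optimality of $\pi$.
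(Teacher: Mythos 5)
Your proof is correct, but it takes a genuinely different route from the paper's. The paper proves this lemma with the performance difference lemma of Kakade and Langford: writing $V^{\pi}(s)-V^{\mathrm{opt}}(s)$ as $\frac{1}{1-\gamma}\mathbb{E}_{s'\sim d_{s}^{\pi}}\bigl[\mathbb{E}_{a\sim\pi(s',\cdot)}Q^{\mathrm{opt}}(s',a)-\max_{a'}Q^{\mathrm{opt}}(s',a')\bigr]$, noting the left side vanishes by optimality of $\pi$, and concluding that each (nonpositive) term inside the expectation must vanish. You instead observe that pointwise optimality $V^{\pi}=V^{\mathrm{opt}}$ forces $Q^{\pi}=Q^{\mathrm{opt}}$ via the one-step lookahead formula, and then chain the Bellman consistency equation with the Bellman optimality equation. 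Your argument is more elementary --- it needs no visitation distributions and delivers the identity directly at every state, whereas the paper's version strictly only yields it on the support of $d_{s}^{\pi}$ and implicitly relies on $d_{s}^{\pi}(s)\geq 1-\gamma>0$ together with the arbitrariness of $s$ to cover all of $\mathcal{S}$. What the paper's choice buys is uniformity: the performance difference lemma is the workhorse reused in the proofs of Propositions~\ref{lem:optimal_action} and~\ref{prop:all determinisitc m are optimal}, so introducing it here keeps the appendix's toolkit consistent. Both arguments rest on the same reading of ``optimal policy'' as pointwise optimality ($V^{\pi}(s)=V^{\mathrm{opt}}(s)$ for all $s$), which you make explicit and the paper leaves implicit; this is worth flagging because where the lemma is applied, $\pi^{\star}$ is only known to be optimal for the expected payoff under the initial distribution $\rho$, so strictly speaking both proofs inherit the same caveat.
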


\begin{proof}
Let $d_{s}^{\pi}(s')=(1-\gamma)\sum_{t=0}^{\infty}\gamma^{t}\mathbb{P}(s_{t}=s'\mid s_{0}=s)$. By the performance difference lemma~\citep{kakadeApproximatelyOptimalApproximate2002}, 
\begin{align*}
V^{\pi}(s)-V^{\mathrm{opt}}(s) & =\frac{1}{1-\gamma}\mathbb{E}_{s'\sim d_{s}^{\pi}}\left[\mathbb{E}_{a\sim\pi(s', \cdot)}Q^{\mathrm{opt}}(s',a)-V^{\mathrm{opt}}(s')\right]\\
 & =\frac{1}{1-\gamma}\mathbb{E}_{s'\sim d_{s}^{\pi}}\left[\mathbb{E}_{a\sim\pi(s', \cdot)}Q^{\mathrm{opt}}(s',a)-\max_{a'\in\mathcal{A}}Q^{\mathrm{opt}}(s',a')\right].
\end{align*}
Meanwhile, it follows from the optimality of $\pi$ that $V^{\pi}(s)-V^{\mathrm{opt}}(s)=0$, which implies that 
\[
\mathbb{E}_{s'\sim d_{s}^{\pi}}\left[\mathbb{E}_{a\sim\pi(s', \cdot)}Q^{\mathrm{opt}}(s',a)-\max_{a'\in\mathcal{A}}Q^{\mathrm{opt}}(s',a')\right]=0.
\]
Because $\mathbb{E}_{a\sim\pi(s', \cdot)}Q^{\mathrm{opt}}(s',a)-\max_{a'\in\mathcal{A}}Q^{\mathrm{opt}}(s',a')\leq0$ for all $s'\in\mathcal{S}$, it must hold that
\[
\mathbb{E}_{a\sim\pi(s', \cdot)}Q^{\mathrm{opt}}(s',a)-\max_{a'\in\mathcal{A}}Q^{\mathrm{opt}}(s',a')=0
\]
for all $s'\in\mathcal{S}$.
\end{proof}
\begin{proof}[Proof of Proposition~\ref{lem:optimal_action}]
If $\pi^{\star}\in\Pidet$, then $\Pidet(\pi^{\star})=\{\pi^{\star}\}$ by definition. Therefore, any $\pi\in\Pidet(\pi^{\star})$ trivially satisfies $\langle r_{2}^{x^{\star}},m^{\pi}\rangle=\iprod{r_{2}^{x^{\star}}}{m^{\star}}=\max_{m'\in\mathcal{M}}\langle r_{2}^{x^{\star}},m'\rangle$. 

Consider the case when $\pi^{\star}\notin\Pidet$. Let $V_{2}^{\mathrm{opt}}$ and $\Qopt_2$ be the optimal value function and the optimal Q-function of the follower's MDP, respectively. Because $\pistar$ is an optimal policy, it follows from Lemma~\ref{lem:optimal_action-1} that 
\begin{equation}
	\mathbb{E}_{a\sim\pi^{\star}(s,\cdot)}Q_{2}^{\mathrm{opt}}(s,a)=\max_{a'\in\mathcal{A}}Q_{2}^{\mathrm{opt}}(s,a')\quad\forall s\in\mathcal{S}.\label{eq:optimal policy has same Q function}
\end{equation}
Hence, if $\pi^{\star}(s,a)\neq0$ for some $(s,a)\in\mathcal{S}\times\mathcal{A}$, then 
\begin{equation}
	Q_{2}^{\mathrm{opt}}(s,a)=\max_{a'\in\mathcal{A}}Q_{2}^{\mathrm{opt}}(s,a')=V_{2}^{\mathrm{opt}}(s).\label{eq:same Q value for non-det actions}
\end{equation}
For any $s\in\mathcal{S},$ let
\[
\mathcal{A}_{s}^{\star}=\{a\mid\pi^{\star}(s,a)\in(0,1)\},
\]
and consider the following two cases.

Case 1: Consider $s\in\mathcal{S}$ such that $\mathcal{A}_{s}^{\star}$ is nonempty. Since $\pi\in\Pidet(\pistar)$ is deterministic, there exists some $a^{\star}\in\cA$ such that $\pi(s,a^{\star})=1$. From the definition of $\Pidet(\pistar)$, it follows that $\pistar(s,a^{\star})\neq0$. Use~(\ref{eq:same Q value for non-det actions}) to obtain $Q_{2}^{\mathrm{opt}}(s,a^{\star})=V_{2}^{\mathrm{opt}}(s)$. This implies 
\[
\E_{a\sim\pi(s,\cdot)}\Qopt_{2}(s,a)=\Qopt_{2}(s,a^{\star})=V_{2}^{\mathrm{opt}}(s).
\]

Case 2: Consider $s\in\mathcal{S}$ such that $\mathcal{A}_{s}^{\star}$ is empty, i.e., $\pistar(s,a)\in\{0,1\}$ for all $a\in\cA$. Since $\pi\in\Pidet(\pistar)$, one must have $\pi(s,a)=\pi^{\star}(s,a)$ for all $a\in\mathcal{A}$. Use~(\ref{eq:optimal policy has same Q function}) to obtain
\[
\mathbb{E}_{a\sim\pi(s,\cdot)}Q_{2}^{\mathrm{opt}}(s,a)=\mathbb{E}_{a\sim\pistar(s,\cdot)}Q_{2}^{\mathrm{opt}}(s,a)=\max_{a'\in\mathcal{A}}Q_{2}^{\mathrm{opt}}(s,a')=V_{2}^{\mathrm{opt}}(s).
\]

Therefore, for all $s\in\mathcal{S},$ it holds that $\mathbb{E}_{a\sim\pi(s,\cdot)}Q_{2}^{\mathrm{opt}}(s,a)=V_{2}^{\mathrm{opt}}(s)$. Applying the performance difference lemma again, one can obtain that for all $s_0\in\mathcal{S}$,
\[
V_{2}^{\pi}(s_0)-V_{2}^{\mathrm{opt}}(s_0)=\frac{1}{1-\gamma}\mathbb{E}_{s\sim d_{s_0}^{\pi}}\left[\mathbb{E}_{a\sim\pi(s,\cdot)}Q_{2}^{\mathrm{opt}}(s,a)-V_{2}^{\mathrm{opt}}(s)\right]=0.
\]
This implies $\mathbb{E}_{s_{0}\sim\rho}\left[V_{2}^{\pi}(s_{0};\xopt)\right]=\mathbb{E}_{s_{0}\sim\rho}\left[V_{2}^{\mathrm{opt}}(s_{0};\xopt)\right]$ or, equivalently, $\langle r_{2}^{x^{\star}},m^{\pi}\rangle=\langle r_{2}^{x^{\star}},m^{\star}\rangle=\max_{m'\in\mathcal{M}}\langle r_{2}^{x^{\star}},m'\rangle$, from which one obtains $m^{\pi}\in\BR(x^{\star})$.
\end{proof}

\subsection{Proof of Proposition~\ref{prop:all determinisitc m are optimal}\label{subsec:Proof-of-Proposition all  deterministic oc are}}
\begin{proof}
If $\pi^{\star}\in\Pidet$, then $\Pidet(\pi^{\star})=\left\{ \pi^{\star}\right\} $, and the proposition holds trivially. Consider the case when $\pi^{\star}\notin\Pidet$. For any $s\in\cS$, let
\[
\mathcal{A}_{s}=\{a\in\mathcal{A}\mid\pi^{\star}(s,a)\in(0,1)\}.
\]
Because $\pistar\notin\Pidet$, there exists $s'\in\cS$ such that $\cA_{s'}$ is nonempty. Choose $a'\in\mathcal{A}_{s'}$ arbitrarily. Define a policy $\pi'$ as follows: $\pi'(s,\cdot)=\pi^{\star}(s,\cdot)$ when $s\neq s'$, and $\pi'(s,a')=1$ when $s=s'$. We shall first show that $\pi'\in\BR(x^{\star})$. By Lemma~\ref{lem:optimal_action-1}, since $\pi^{\star}$ is optimal, 
\begin{align*}
	\mathbb{E}_{a\sim\pi^{\star}(s',\cdot)}\left[Q_{2}^{\mathrm{opt}}(s',a)\right] & =\max_{a\in\mathcal{A}}Q_{2}^{\mathrm{opt}}(s',a).
\end{align*}
This implies that $Q_{2}^{\mathrm{opt}}(s',a)=\max_{a\in\mathcal{A}}Q_{2}^{\mathrm{opt}}(s',a)$ for all $a\in\mathcal{A}_{s'}$. Because $a'\in\cA_{s'}$, it holds that 
\[
\mathbb{E}_{a\sim\pi'(s',\cdot)}\left[Q_{2}^{\mathrm{opt}}(s',a)\right]=Q_{2}^{\mathrm{opt}}(s',a')=\max_{a\in\mathcal{A}}Q_{2}^{\mathrm{opt}}(s',a).
\]
Meanwhile, for $(s,a)$ such that $s\neq s'$, since $\pi'(s,a)=\pi^{\star}(s,a)$,
\[
\mathbb{E}_{a\sim\pi'(s,\cdot)}\left[Q_{2}^{\mathrm{opt}}(s,a)\right]=\mathbb{E}_{a\sim\pi^{\star}(s,\cdot)}\left[Q_{2}^{\mathrm{opt}}(s,a)\right]=\max_{a\in\mathcal{A}}Q_{2}^{\mathrm{opt}}(s,a).
\]
By the performance difference lemma, $\pi'$ is an optimal policy of the follower. 

Define $V_{1}^{\star}\deq V_{1}^{\pistar}$ and $Q_{1}^{\star}\deq Q_{1}^{\pistar}$. Because $\pi'$ is an optimal policy of the follower and hence a feasible solution to~(\ref{eq:original problem}), it follows from the optimality of $\pistar$ that 
\begin{equation}
	V_{1}^{\pi'}(s_{0})-V_{1}^{\star}(s_{0})\leq0.\label{eq:V_compare-1}
\end{equation}
By the performance difference lemma,
\begin{equation}
	V_{1}^{\pi'}(s_{0})-V_{1}^{\star}(s_{0})=\frac{1}{1-\gamma}\mathbb{E}_{s\sim d_{s_{0}}^{\pi'}}\left[\mathbb{E}_{a\sim\pi'(s,\cdot)}Q_{1}^{\star}(s,a)-V_{1}^{\star}(s)\right].\label{eq:V_compare-2}
\end{equation}
Since $\pi'(s,\cdot)=\pi^{\star}(s,\cdot)$ for any $s\neq s'$, we know that $\mathbb{E}_{a\sim\pi'(s,\cdot)}Q_{1}^{\star}(s,a)-V_{1}^{\star}(s)=0$ for $s\neq s'$. It then follows from (\ref{eq:V_compare-1}) and (\ref{eq:V_compare-2}) that $\mathbb{E}_{a\sim\pi'(s,\cdot)}Q_{1}^{\star}(s,a)-V_{1}^{\star}(s)\leq0$ when $s=s'$, i.e.,
\[
Q_{1}^{\star}(s',a')\leq V_{1}^{\star}(s').
\]
Since $s'\in\cS$ and $a'\in\cA_{s'}$ were chosen arbitrarily, it holds that $Q_{1}^{\star}(s,a)\leq V_{1}^{\star}(s)$ for all $(s,a)\in\mathcal{S}\times\mathcal{A}$ such that $\pi^{\star}(s,a)\in(0,1)$. However, the inequality can never be achieved, else 
\[
\mathbb{E}_{a\sim\pi^{\star}(s,\cdot)}Q_{1}^{\star}(s,a)-V_{1}^{\star}(s)<0,
\]
which violates the Bellman consistency equation. In other words, $Q_{1}^{\star}(s,a)=V_{1}^{\star}(s)$ when $\pi^{\star}(s,a)\in(0,1)$. In addition, $Q_{1}^{\star}(s,a)=V_{1}^{\star}(s)$ when $\pistar(s,a)=1$. Combine the two conditions to obtain $Q_{1}^{\star}(s,a)=V_{1}^{\star}(s)$ when $\pi^{\star}(s,a)\neq0$. Consider any $\pi\in\Pidet(\pistar)$. When $\pi(s,a)=1$, it must hold that $\pistar(s,a)\neq0$, and consequently $Q_{1}^{\star}(s,a)=V_{1}^{\star}(s)$. For any $s_{0}\in\cS$, by the performance difference lemma, 
\[
V_{1}^{\pi}(s_{0})-V_{1}^{\star}(s_{0})=\frac{1}{1-\gamma}\mathbb{E}_{s\sim d_{s_{0}}^{\pi}}\left[\mathbb{E}_{a\sim\pi(s,\cdot)}Q_{1}^{\star}(s,a)-V_{1}^{\star}(s)\right]=\frac{1}{1-\gamma}\mathbb{E}_{s\sim d_{s_{0}}^{\pi}}\left[V_{1}^{\star}(s)-V_{1}^{\star}(s)\right]=0.
\]
This implies $\mathbb{E}_{s_{0}\sim\rho}\left[V_{1}^{\pi}(s_{0})\right]=\mathbb{E}_{s_{0}\sim\rho}\left[V_{1}^{\star}(s_{0})\right]$ or, equivalently, $\iprod{r_{1}}{m^{\pi}}=v_{1}^{\star}$. 
\end{proof}

\section{Proofs of the existence of optimal interior-point allocation}

\subsection{Proof of Theorem~\ref{Thm:Sufficiency}\label{subsec:Proof-of-Theorem_Sufficiency}}
\begin{lem}
\label{lem:intersection of convex and open}Let $X$ and $Y$ be two sets. Suppose that $X$ is convex with a nonempty interior and that $Y$ is open. Then either $X\cap Y=\emptyset$, or $X\cap Y$ has a nonempty interior. 
\end{lem}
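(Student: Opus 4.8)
The plan is to reduce the statement to a single claim: \emph{if $X\cap Y\neq\emptyset$, then $\intr(X)\cap Y\neq\emptyset$}. This suffices, because $\intr(X)\cap Y$ is the intersection of two open sets, hence open, and it is contained in $X\cap Y$; so its nonemptiness immediately yields that $X\cap Y$ has a nonempty interior. Thus the whole argument comes down to producing one point that lies simultaneously in the interior of $X$ and in $Y$.

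The key tool I would invoke is the standard fact from convex analysis on segments toward an interior point: if $x_{0}\in\intr(X)$ and $x\in X$, then $z_{\lambda}\deq\lambda x_{0}+(1-\lambda)x\in\intr(X)$ for every $\lambda\in(0,1]$. I would give a one-line justification rather than a full computation: pick $\epsilon>0$ with the ball of radius $\epsilon$ around $x_{0}$ contained in $X$; then the ball of radius $\lambda\epsilon$ around $z_{\lambda}$ is contained in $X$, since any $w$ in that ball can be written as $\lambda\bigl(x_{0}+\tfrac{1}{\lambda}(w-z_{\lambda})\bigr)+(1-\lambda)x$ with the first argument in the $\epsilon$-ball around $x_{0}$ (hence in $X$) and $x\in X$, so $w\in X$ by convexity. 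Note that this only uses $x\in X$, not $x$ in the closure of $X$, which keeps the argument self-contained.

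With this in hand, the proof proceeds as follows. Assuming $X\cap Y\neq\emptyset$, pick $x\in X\cap Y$ and $x_{0}\in\intr(X)$ (the latter exists by hypothesis). Form $z_{\lambda}=\lambda x_{0}+(1-\lambda)x$; by the fact above $z_{\lambda}\in\intr(X)$ for all $\lambda\in(0,1]$. Since $z_{\lambda}\to x$ as $\lambda\to0^{+}$ and $Y$ is open with $x\in Y$, there is some $\lambda_{0}\in(0,1]$ with $z_{\lambda_{0}}\in Y$, and therefore $z_{\lambda_{0}}\in\intr(X)\cap Y$. This establishes the reduced claim and hence the lemma. There is no genuine obstacle here; the only place requiring a little care is the segment-to-interior-point fact and the observation that we need $x$ only to lie in $X$ itself.
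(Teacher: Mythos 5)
Your proof is correct. The reduction to showing $\intr(X)\cap Y\neq\emptyset$ is valid (an open subset of $X\cap Y$ witnesses nonempty interior), the line-segment fact is stated and justified accurately, and the limiting argument $z_{\lambda}\to x$ combined with openness of $Y$ closes the gap. Your remark that only $x\in X$ (rather than $x\in\overline{X}$) is needed is exactly the right observation to keep the segment lemma elementary.

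The route differs from the paper's in packaging, though both rest on the same underlying convexity principle. The paper first establishes the topological identity $\partial X=\partial(\intr(X))$ via closure--complement manipulations, then splits into the cases $x\in\intr(X)$ and $x\in\partial X$, using in the latter case that an open ball around $x$ contained in $Y$ must meet $\intr(X)$ because $x$ lies on the boundary of $\intr(X)$. Your argument avoids the boundary identity and the case split entirely: the single segment construction $z_{\lambda}=\lambda x_{0}+(1-\lambda)x$ handles both cases uniformly and produces the desired point in $\intr(X)\cap Y$ explicitly. What your approach buys is a shorter, self-contained proof that works verbatim in any normed (indeed topological vector) space; what the paper's approach buys is the reusable identity $\partial X=\partial(\intr(X))$, which is of independent interest but is not needed elsewhere in the paper. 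Either proof is acceptable here.
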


\begin{proof}
The proof is reproduced from a post on StackExchange (see Footnote\footnote{\url{https://math.stackexchange.com/q/2701244}}). Within this proof, for a given set $X$, we shall denote its interior by $X^{\circ}$, its boundary by $\partial X$, its complement by $X^C$, and its closure by $\overline{X}$. 

We first show that $\partial X=\partial(X^{\circ})$. Let $c\in X^{\circ}$ and $x\in\partial X$. Without loss of generality, we assume that $x=0$ by a change of coordinates. Then $\lambda c\in X$ for $0\leq\lambda\leq1$ by convexity. Because $c\in X^{\circ},$ there is an open set $U\subseteq X^{\circ}$ that contains $c$. Define $U_{\lambda}\deq\{y\mid y=\lambda u,\ u\in U\}$. It is not hard to verify that when $0<\lambda\leq1$, the set $U_{\lambda}$ is open, contains $\lambda c$, and $U_{\lambda}\subseteq X$ by the convexity of $X$, which implies $\lambda c\in X^{\circ}$ when $0<\lambda\leq1$. This shows $x\in\overline{X^{\circ}}$. Meanwhile, since $x\in\partial X$, it holds that $x\in\overline{X^{C}}\subseteq\overline{X^{\circ C}}$. We obtain that $x\in\overline{X^{\circ}}\cap\overline{X^{\circ C}}=\partial(X^{\circ})$, which implies $\partial X\subseteq\partial(X^{\circ})$. For the other direction, recall that $\overline{X^{C}}=X^{\circ C}$ for any $X$. Use this result to obtain $\overline{X}=\overline{(X^{C})^{C}}=X^{C\circ C}$, $\overline{X^{\circ}}=\overline{(X^{\circ C})^{C}}=X^{\circ C\circ C}$, and $\overline{X^{\circ C}}=X^{\circ\circ C}=X^{\circ C}$. Thus,
\[
\partial X=\overline{X}\cap\overline{X^{C}}=X^{\circ C}\cap X^{C \circ C},
\]
and 
\[
\partial (X^{\circ})=\overline{X^{\circ}}\cap\overline{X^{\circ C}}=X^{\circ C}\cap X^{\circ C \circ C}.
\]
From the fact $X^{C\circ C}\supseteq X^{\circ C \circ C}$, we obtain $\partial X\supseteq\partial (X^{\circ})$.

Suppose that $X\cap Y$ is nonempty, and let $x\in X\cap Y$. If $x\in X^{\circ}$, then $X^{\circ}\cap Y\neq\emptyset$ is a nonempty open set in $X\cap Y$. If $x\in\partial X$, then it follows from the previous discussion that $x \in \partial (X^{\circ})$. Because $Y$ is open, there exists an open ball $B(x) \subseteq Y$ centered at $x$. Moreover, $B(x)\cap X^{\circ}\neq\emptyset$ since $x \in \partial(X^{\circ})$. Therefore, $B(x)\cap X^{\circ}$ is a nonempty open set in $X\cap Y$.
\end{proof}
\begin{lem}
\label{lem: inequality of delta^s}Given a function $v \colon \mathcal{S}_{d}\rightarrow\mathbb{R}$ and an occupancy measure $m$, define 
\begin{equation}
\delta^{v}(s,a)=\begin{cases}
v(s) & \text{if }s\in\mathcal{S}_{d},\\
0 & \text{otherwise},
\label{eq:def_deltav}
\end{cases}
\end{equation}
and $\nu(s)=\sum_{a\in\mathcal{A}}m(s,a)$. Then $|\iprod{\delta^{v}}{m}| \leq\Vert v\Vert_{1}\max_{s\in\mathcal{S}_{d}}\nu(s)$.
\end{lem}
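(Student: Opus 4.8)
The plan is to compute the inner product $\iprod{\delta^{v}}{m}$ explicitly, reduce it to a sum over $\mathcal{S}_{d}$, and then bound it by a triangle inequality followed by a H\"older-type estimate. The only structural fact we need beyond the definitions is that $m\succeq0$, which forces $\nu(s)=\sum_{a\in\mathcal{A}}m(s,a)\geq0$ for every $s\in\mathcal{S}$.

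First I would unfold the definition of $\iprod{\cdot}{\cdot}$ together with the definition of $\delta^{v}$ in \eqref{eq:def_deltav}: since $\delta^{v}(s,a)=0$ whenever $s\notin\mathcal{S}_{d}$, and $\delta^{v}(s,a)=v(s)$ is independent of $a$ when $s\in\mathcal{S}_{d}$, we get
\[
\iprod{\delta^{v}}{m}=\sum_{(s,a)\in\mathcal{S}\times\mathcal{A}}\delta^{v}(s,a)m(s,a)=\sum_{s\in\mathcal{S}_{d}}v(s)\sum_{a\in\mathcal{A}}m(s,a)=\sum_{s\in\mathcal{S}_{d}}v(s)\nu(s).
\]
Next, applying the triangle inequality and using $\nu(s)\geq0$ to drop the absolute value on $\nu(s)$,
\[
\abs{\iprod{\delta^{v}}{m}}\leq\sum_{s\in\mathcal{S}_{d}}\abs{v(s)}\,\nu(s)\leq\Bigl(\max_{s'\in\mathcal{S}_{d}}\nu(s')\Bigr)\sum_{s\in\mathcal{S}_{d}}\abs{v(s)}=\norm{v}_{1}\max_{s\in\mathcal{S}_{d}}\nu(s),
\]
which is the desired bound.

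I do not expect any genuine obstacle here; the statement is essentially a one-line computation, and the only point worth flagging is that the nonnegativity of the occupancy measure is what lets us pull $\max_{s}\nu(s)$ out as a nonnegative factor rather than having to work with $\abs{\nu(s)}$. The lemma will presumably be used in tandem with the definition of $\delta^{v}$ in the existence proof (Appendix~\ref{subsec:Proof-of-Theorem_Sufficiency}) to relate perturbations of an allocation $\bar x$ to changes in the follower's payoff $\iprod{r_{2}^{\bar x}}{m}$, much as Lemma~\ref{lem:OC difference} does.
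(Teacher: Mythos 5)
Your proof is correct and follows essentially the same route as the paper: both compute $\iprod{\delta^{v}}{m}=\sum_{s\in\mathcal{S}_{d}}v(s)\nu(s)$ and then conclude by H\"older's inequality, with your version merely spelling out the H\"older step and noting explicitly that $\nu(s)\geq0$ lets one write $\max_{s}\nu(s)$ rather than $\max_{s}\abs{\nu(s)}$. No issues.
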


\begin{proof}
According to the definition,
\begin{align*}
\iprod{\delta^{v}}{m} & =\sum_{s\in\mathcal{S}}\sum_{a\in\mathcal{A}}m(s,a)\delta^{v}(s,a)  =\sum_{s\in\mathcal{S}_{d}}\sum_{a\in\mathcal{A}}v(s)m(s,a)\\
 & =\sum_{s\in\mathcal{S}_{d}}v(s)\sum_{a\in\mathcal{A}}m(s,a) =\sum_{s\in\mathcal{S}_{d}}v(s)\nu(s). \numberthis \label{eq:deltav_m_iprod}
\end{align*}
Apply H\"{o}lder's inequality to complete the proof.
\end{proof}
\begin{lem}
\label{lem:nonempty interior 2}Suppose that $x\in\cX$ and $m \in \cM$ satisfy $\iprod{r_{2}^{x}}{m}>\iprod{r_{2}^{x}}{\mdet}$ for all $\mdet\in\Mdet\setminus\{m\}$. Then $\mathcal{P}_{m}$ has a nonempty interior, and $x$ is an interior-point allocation of $\mathcal{P}_{m}$.
\end{lem}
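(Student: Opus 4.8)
The plan is to exploit that $\cM$ is a polytope whose vertex set is exactly $\Mdet$ (Lemma~\ref{lem:Deterministic occupancy measures are verticies}): this lets me test membership in an allocation region against finitely many linear inequalities, and the strict inequalities in the hypothesis then buy a uniform perturbation margin.

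First I would record the vertex reduction. For any $x'\in\reals^{|\Sd|}$, the function $m'\mapsto\iprod{r_{2}^{x'}}{m'}$ is linear on the bounded polyhedron $\cM$ (boundedness follows from $\sum_{s,a}m'(s,a)=1/(1-\gamma)$ in Lemma~\ref{lem: sum_of_oc}), so it attains its maximum over $\cM$ at a vertex. Hence $x'\in\cP_{m}$ if and only if $\iprod{r_{2}^{x'}}{m}\geq\iprod{r_{2}^{x'}}{\mdet}$ for every $\mdet\in\Mdet$. Two consequences: (i) the hypothesis forces $m\in\Mdet$, since otherwise $m$ would be a convex combination of vertices all distinct from $m$ and $\iprod{r_{2}^{x}}{m}$ could not strictly dominate all $\iprod{r_{2}^{x}}{\mdet}$; and (ii) it therefore suffices to control the finitely many gaps for $\mdet\in\Mdet\setminus\{m\}$. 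Accordingly I set $b\deq\min_{\mdet\in\Mdet\setminus\{m\}}\bigl(\iprod{r_{2}^{x}}{m}-\iprod{r_{2}^{x}}{\mdet}\bigr)$, which is strictly positive by the hypothesis and finiteness of $\Mdet$. (If $\Mdet=\{m\}$ then $\cM=\{m\}$, so $\cP_{m}=\reals^{|\Sd|}$ and the claim is immediate.)

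Next I would perturb $x$ to $x+cv$ with $\norm{v}_{1}\leq1$ and $c>0$ to be chosen. Writing $\delta^{v}$ as in Lemma~\ref{lem: inequality of delta^s}, we have $r_{2}^{x+cv}=r_{2}^{x}+c\delta^{v}$, so for each $\mdet\in\Mdet\setminus\{m\}$,
\[
\iprod{r_{2}^{x+cv}}{m}-\iprod{r_{2}^{x+cv}}{\mdet}=\bigl(\iprod{r_{2}^{x}}{m}-\iprod{r_{2}^{x}}{\mdet}\bigr)+c\,\iprod{\delta^{v}}{m-\mdet}\geq b-\frac{2c}{1-\gamma},
\]
where the final bound applies Lemma~\ref{lem: inequality of delta^s} to $m$ and to $\mdet$ together with $\sum_{a}m'(s,a)\leq\sum_{s,a}m'(s,a)=1/(1-\gamma)$. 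Choosing $c\deq b(1-\gamma)/2>0$ makes this gap nonnegative; for $\mdet=m$ the gap is trivially zero. By the vertex reduction in the previous paragraph, $x+cv\in\cP_{m}$ for all $\norm{v}_{1}\leq1$. Since $x\in\cX$ by hypothesis, this is precisely the assertion that $x$ is an interior-point allocation of $\cP_{m}$, with margin at least $b(1-\gamma)/2$; in particular $\cP_{m}$ has a nonempty interior.

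I expect the only step requiring any thought is the vertex reduction --- passing from the infinitely many defining constraints of $\cP_{m}$ to the finite vertex set $\Mdet$ of $\cM$ --- since that is what converts the strict inequalities of the hypothesis into a genuine (and computable) margin. The subsequent estimate is an application of H\"older's inequality already isolated in Lemma~\ref{lem: inequality of delta^s}, and is routine.
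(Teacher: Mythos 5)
Your proof is correct and takes essentially the same route as the paper's: both reduce membership in $\cP_{m}$ to finitely many comparisons against the vertices $\Mdet$ of $\cM$, take the minimum strict gap $b$ over $\Mdet\setminus\{m\}$, and use Lemma~\ref{lem: inequality of delta^s} with Lemma~\ref{lem: sum_of_oc} to show that an $\ell_{1}$-perturbation of radius $b(1-\gamma)/2$ preserves the optimality of $m$. The only difference is cosmetic parametrization (the paper writes the perturbation as $\epsilon v$ with $\epsilon=b$ and $\norm{v}_{1}\leq\tfrac{1}{2}(1-\gamma)$, which is the same ball), plus your explicit handling of the degenerate case $\Mdet=\{m\}$.
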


\begin{proof}
Let $\mdet\in\Mdet$. Define $\sigma(m,\mdet) \deq \iprod{r_{2}^{x}}{m-\mdet}$. Let $x'(v) \deq x + \epsilon v$, where $\epsilon=\min_{\mdet'\in\Mdet\setminus\{m\}}\sigma(m,\mdet')$, and $v$ is any vector satisfying $\Vert v\Vert_{1}\leq\frac{1}{2}(1-\gamma)$. It follows that $0<\epsilon\leq\sigma(m,\mdet)$. Define $\nu_{\mathrm{det}}(s) \deq \sum_{a\in\mathcal{A}}\mdet(s,a)$ and $\nu(s) \deq \sum_{a\in\mathcal{A}}m(s,a)$. It holds that 
\begin{align*}
& \iprod{r_{2}^{x'(v)}}{\mdet} -\iprod{r_{2}^{x'(v)}}{m}=\iprod{r_{2}^{x+\epsilon v}}{\mdet-m}\\
 & \qquad =\iprod{r_{2}^{x}}{\mdet-m}+\epsilon\cdot\iprod{\delta^{v}}{\mdet-m}\\
 & \qquad \leq-\epsilon+\epsilon\cdot\iprod{\delta^{v}}{\mdet}-\epsilon\cdot\iprod{\delta^{v}}{m}\\
 & \qquad \leq-\epsilon+\epsilon\cdot\frac{1}{2}(1-\gamma)\cdot\max_{s\in\mathcal{S}_{d}}\nu_{\mathrm{det}}(s)+\epsilon\cdot\frac{1}{2}(1-\gamma)\cdot\max_{s\in\mathcal{S}_{d}}\nu(s)\\
 & \qquad \leq-\epsilon+\epsilon\\
 & \qquad = 0 \numberthis \label{eq:mdet_compare},
\end{align*}
where $\delta^{v}$ is defined in \eqref{eq:def_deltav}. The second inequality is due to Lemma~\ref{lem: inequality of delta^s}. The third inequality holds because $\sum_{(s,a)\in\mathcal{S}\times\mathcal{A}}m(s,a)$ is bounded by $\frac{1}{1-\gamma}$ according to Lemma~\ref{lem: sum_of_oc}. Because any MDP must have a deterministic optimal occupancy measure, \eqref{eq:mdet_compare} implies $m\in\BR(x'(v))$ for all $\Vert v\Vert_{1}\leq\frac{1}{2}(1-\gamma)$. In other words, the open ball 
\[
\{x+\epsilon v\mid\Vert v\Vert_{1}<\frac{1}{2}(1-\gamma),\ \epsilon=\min_{\mdet'\in\Mdet\setminus\{m\}}\sigma(m,\mdet')\}
\]
contains $x\in\mathcal{X}$ and is in $\mathcal{P}_{m}$, proving that $x$ is an interior point of $\cP_m$.
\end{proof}

\begin{proof}[Proof of Theorem~\ref{Thm:Sufficiency}]
(Sufficiency) Let $(x^{0},m^{0})$ be an optimal solution of (\ref{eq:margin problem}) satisfying $C-\sum_{s\in\mathcal{S}_{d}}x^{0}(s)>0$. Define
\begin{equation}
\mathcal{M}^{\star}(x^{0})\deq\{m\mid\iprod{r_{1}}{m}=\iprod{r_{1}}{m^{0}}\} \cap (\Mdet \cap \BR(x^{0})) \label{eq:def_Mstar_x0}
\end{equation}
and
\[
b \deq \iprod{r_{2}^{x^{0}}}{m^{0}}-\max_{m'\in\Mdet\setminus\BR(x^{0})}\iprod{r_{2}^{x^{0}}}{m'}>0.
\]
Consider $x'=x^{0}+\epsilon\cdot\frac{(1-\gamma)}{2\vert\mathcal{S}_{d}\vert}\cdot\mathbf{1}$, where $\epsilon=\min\left\{ b,\frac{1}{(1-\gamma)}\left(C-\sum_{s\in\mathcal{S}_{d}}x^{0}(s)\right)\right\} >0$. It follows that $x'\in\cX$ because
\begin{align*}
C-\sum_{s\in\mathcal{S}_{d}}x'(s) & =C-\sum_{s\in\mathcal{S}_{d}}x^{0}(s)-\vert\mathcal{S}_{d}\vert\cdot\epsilon\cdot\frac{(1-\gamma)}{2\vert\mathcal{S}_{d}\vert}\\
 & \geq C-\sum_{s\in\mathcal{S}_{d}}x^{0}(s)-\frac{1}{(1-\gamma)}\left(C-\sum_{s\in\mathcal{S}_{d}}x^{0}(s)\right)\cdot\frac{(1-\gamma)}{2}\\
 & =\frac{1}{2}\left(C-\sum_{s\in\mathcal{S}_{d}}x^{0}(s)\right)\\
 & >0.
\end{align*}
For any $m \in \cM$, it holds that
\begin{align*}
\iprod{r_{2}^{x'}}{m} & =\iprod{r_{2}^{x^{0}+\epsilon\cdot\frac{(1-\gamma)}{2\vert\mathcal{S}_{d}\vert}\cdot\mathbf{1}}}{m}\\
 & =\iprod{r_{2}^{x^{0}}}{m}+\epsilon\cdot\frac{(1-\gamma)}{2\vert\mathcal{S}_{d}\vert}\sum_{(s,a):s\in\mathcal{S}_{d}}m(s,a)\\
 & =\iprod{r_{2}^{x^{0}}}{m}+\epsilon\cdot\frac{(1-\gamma)}{2\vert\mathcal{S}_{d}\vert}\cdot\iprod{r_{1}}{m}. \numberthis \label{eq:r2xprime_m}
\end{align*}
Furthermore, when $m\in\Mdet\setminus\BR(x^{0})$, it follows from \eqref{eq:r2xprime_m} that
\begin{align*}
\iprod{r_{2}^{x'}}{m} & =\iprod{r_{2}^{x^{0}}}{m}+\epsilon\cdot\frac{(1-\gamma)}{2\vert\mathcal{S}_{d}\vert}\cdot\iprod{r_{1}}{m}\\
 & \leq\iprod{r_{2}^{x^{0}}}{m}+\epsilon\cdot\frac{(1-\gamma)}{2\vert\mathcal{S}_{d}\vert}\cdot\frac{1}{1-\gamma}\\
 & \leq\iprod{r_{2}^{x^{0}}}{m}+\epsilon\cdot\frac{1}{2\vert\mathcal{S}_{d}\vert}\\
 & \leq\iprod{r_{2}^{x^{0}}}{m}+\frac{b}{2}\\
 & \leq\iprod{r_{2}^{x^{0}}}{m^{0}}-\frac{b}{2}\\
 & \leq\iprod{r_{2}^{x'}}{m^{0}}-\frac{b}{2}, \numberthis \label{eq:m in M_det=00005CBR}
\end{align*}
where the first inequality is from Lemma~\ref{lem: sum_of_oc}. In the meantime, consider any $\tilde{m}\in\left(\Mdet\cap\BR(x^{0})\right)\setminus\mathcal{M}^{\star}(x^{0})$. It follows that $\iprod{r_{1}}{\tilde{m}}\neq\iprod{r_{1}}{m^{0}}$ and $\iprod{r_{2}^{x^{0}}}{\tilde{m}}=\iprod{r_{2}^{x^{0}}}{m^{0}}$ from the definition of $\mathcal{M}^{\star}(x^{0})$. Since $(x^{0},m^{0})$ is an optimal solution of the reward design problem in (\ref{eq:allocation_problem_om}), it holds that $\iprod{r_{1}}{m^{0}}>\iprod{r_{1}}{\tilde{m}}$. Hence, for any $\tilde{m}\in\left(\Mdet\cap\BR(x^{0})\right)\setminus\mathcal{M}^{\star}(x^{0})$, it follows from \eqref{eq:r2xprime_m} that
\begin{equation}
\begin{aligned}\iprod{r_{2}^{x'}}{\tilde{m}} & =\iprod{r_{2}^{x^{0}}}{\tilde{m}}+\epsilon\cdot\frac{(1-\gamma)}{2\vert\mathcal{S}_{d}\vert}\cdot\iprod{r_{1}}{\tilde{m}}\\
 & <\iprod{r_{2}^{x^{0}}}{\tilde{m}}+\epsilon\cdot\frac{(1-\gamma)}{2\vert\mathcal{S}_{d}\vert}\cdot\iprod{r_{1}}{m^{0}}\\
 & =\iprod{r_{2}^{x^{0}}}{m^{0}}+\epsilon\cdot\frac{(1-\gamma)}{2\vert\mathcal{S}_{d}\vert}\cdot\iprod{r_{1}}{m^{0}}\\
 & =\iprod{r_{2}^{x'}}{m^{0}}.
\end{aligned}
\label{eq:m in (M_det intersect BR)=00005CM^star}
\end{equation}

Let $c=\iprod{r^{x'}}{m^{0}}-\max_{m'\in\left(\Mdet\cap\BR(x^{0})\right)\setminus\mathcal{M}^{\star}(x^{0})}\iprod{r^{x'}}{m'}$. It follows from~(\ref{eq:m in (M_det intersect BR)=00005CM^star}) that $c>0$. Let $d=\min\left\{ \frac{1}{2}b,c\right\} >0$. Recall that $\Mdet=\left(\Mdet\setminus\BR(x^{0})\right)\cup\left(\left(\Mdet\cap\BR(x^{0})\right)\setminus\mathcal{M}^{\star}(x^{0})\right)\cup\mathcal{M}^{\star}(x^{0})$. For any $m'\in\Mdet\setminus\mathcal{M}^{\star}(x^{0})=\left(\Mdet\setminus\BR(x^{0})\right)\cup\left(\left(\Mdet\cap\BR(x^{0})\right)\setminus\mathcal{M}^{\star}(x^{0})\right)$, from (\ref{eq:m in M_det=00005CBR}) and (\ref{eq:m in (M_det intersect BR)=00005CM^star}), we obtain that 
\begin{align*}
\iprod{r_{2}^{x'}}{m'} & \leq\iprod{r_{2}^{x'}}{m^{0}}-\min\left\{ \frac{1}{2}b,c\right\} \\
 & =\iprod{r_{2}^{x'}}{m^{0}}-d.
\end{align*}

We will continue the proof by considering two mutually exclusive cases.

Case 1: Suppose for all $s\in\mathcal{S}_{d}$ and $m\in\mathcal{M}^{\star}(x^{0})$, it holds that
\begin{equation}
\sum_{a\in\mathcal{A}}m(s,a)=\sum_{a\in\mathcal{A}}m^{0}(s,a).\label{eq:equal for all s,a}
\end{equation}
For any $m\in\mathcal{M}^{\star}(x^{0})$, it follows from \eqref{eq:def_Mstar_x0} that $\iprod{r_{2}^{x^{0}}}{m^{0}}=\iprod{r_{2}^{x^{0}}}{m}$ and $\iprod{r_{1}}{m^{0}}=\iprod{r_{1}}{m}$. Use \eqref{eq:r2xprime_m} to obtain
\begin{equation}
\begin{alignedat}{1}\iprod{r_{2}^{x'}}{m^{0}} & =\iprod{r_{2}^{x^{0}}}{m^{0}}+\epsilon\cdot\frac{(1-\gamma)}{2\vert\mathcal{S}_{d}\vert}\cdot\iprod{r_{1}}{m^{0}}\\
 & =\iprod{r_{2}^{x^{0}}}{m}+\epsilon\cdot\frac{(1-\gamma)}{2\vert\mathcal{S}_{d}\vert}\cdot\iprod{r_{1}}{m}\\
 & =\iprod{r_{2}^{x'}}{m}.
\end{alignedat}
\label{eq: equality of x', mo and m}
\end{equation}
Given a function $v \colon \mathcal{S}_{d}\rightarrow\mathbb{R}$, define $\delta^{v}$ as in \eqref{eq:def_deltav}. In the following, we sometimes abuse the notation and treat $v$ as a vector in $\reals^{|\Sd|}$. Define $x'(v)\deq x'+dv$. For any $m\in\mathcal{M}^{\star}(x^{0})$ and $v\in\reals^{|\Sd|}$,
\begin{equation}
	\iprod{r_{2}^{x'(v)}}{m^{0}} =\iprod{r_{2}^{x'+dv}}{m^{0}} = \iprod{r_{2}^{x'}}{m^{0}}+d\cdot\iprod{\delta^{v}}{m^{0}} =\iprod{r_{2}^{x'}}{m}+d\cdot\iprod{\delta^{v}}{m^{0}}.
	\label{eq: equalities of x'-1}
\end{equation}
From \eqref{eq:deltav_m_iprod} and \eqref{eq:equal for all s,a}, 
\begin{equation}
	\iprod{\delta^{v}}{m^{0}}=\sum_{s\in\Sd}v(s)\sum_{a\in\cA}m^{0}(s,a)=\sum_{s\in\Sd}v(s)\sum_{a\in\cA}m(s,a)=\iprod{\delta^{v}}{m}.\label{eq:iprod_deltav_m0}
\end{equation}
Substitute \eqref{eq:iprod_deltav_m0} into \eqref{eq: equalities of x'-1} to obtain
\begin{equation}
\iprod{r_{2}^{x'(v)}}{m^{0}}=\iprod{r_{2}^{x'}}{m}+d\cdot\iprod{\delta^{v}}{m}=\iprod{r_{2}^{x'(v)}}{m}. \label{eq: equalities of x'}
\end{equation}
Furthermore, when $\Vert v\Vert_{1}\leq\frac{1}{2}(1-\gamma)$, for any $m'\in\Mdet\setminus\mathcal{M}^{\star}(x^{0})$, it holds that
\begin{align*}
\iprod{r_{2}^{x'(v)}}{m'} & =\iprod{r_{2}^{x'+dv}}{m'}\\
 & =\iprod{r_{2}^{x'}}{m'}+d\cdot\iprod{\delta^{v}}{m'}\\
 & \leq\iprod{r_{2}^{x'}}{m'}+d\cdot\frac{1}{2}(1-\gamma)\cdot\max_{s\in\mathcal{S}_{d}}\sum_{a\in\mathcal{A}}m'(s,a)\\
 & \leq\iprod{r_{2}^{x'}}{m'}+\frac{1}{2}d\\
 & \leq\iprod{r_{2}^{x'}}{m^{0}}-\frac{1}{2}d\\
 & \leq\iprod{r_{2}^{x'}}{m^{0}}-d\cdot\frac{1}{2}(1-\gamma)\cdot\max_{s\in\mathcal{S}_{d}}\sum_{a\in\mathcal{A}} m^{0}(s,a)\\
 & \leq\iprod{r_{2}^{x'}}{m^{0}}+d\cdot\iprod{\delta^{v}}{m}\\
 & =\iprod{r_{2}^{x'+dv}}{m^{0}}\\
 & =\iprod{r_{2}^{x'(v)}}{m^{0}}. \numberthis \label{eq: inequalities of x'}
\end{align*}
The first inequality is from Lemma~\ref{lem: inequality of delta^s}. From (\ref{eq: equalities of x'}) and (\ref{eq: inequalities of x'}), it follows that $\iprod{r_{2}^{x'(v)}}{m^{0}}\geq\iprod{r_{2}^{x'(v)}}{m}$ for any $m\in\Mdet$ and $\Vert v\Vert_{1}<(1-\gamma)/2$. This implies that the open ball 
\[
\{x'+dv\mid\Vert v\Vert_{1}<\frac{1}{2}(1-\gamma)\}
\]
is contained in $\mathcal{P}_{m^{0}}$. Since $x'\in\mathcal{X}$, and $m^{0}$ is an optimal occupancy measure, $x'$ must be an optimal interior-point allocation. 

Case 2: Consider $m\in\mathcal{M}^{\star}(x^{0})$. When condition (\ref{eq:equal for all s,a}) in Case 1 does not hold, there must exist some $s^{\star}\in\mathcal{S}_{d}$ such that 
\begin{equation}
\min_{m\in\mathcal{M}^{\star}(x^{0})}\sum_{a\in\mathcal{A}}m(s^{\star},a)\neq\sum_{a\in\mathcal{A}}m^{0}(s^{\star},a).\label{eq: inequality for some s,a}
\end{equation}
The minimum on the left side of (\ref{eq: inequality for some s,a}) exists because $\mathcal{M}^{\star}(x^{0})$ is finite. Moreover, there must exist $m_{\star}\in\mathcal{M}^{\star}(x^{0})$ satisfying $\sum_{a\in\mathcal{A}}m_{\star}(s^{\star},a)=\min_{m\in\mathcal{M}^{\star}(x^{0})}\sum_{a\in\mathcal{A}}m(s^{\star},a)$. Consider $x''$ such that $x''(s)=x'(s)$ when $s\in\mathcal{S}_{d}\setminus\{s^{\star}\}$ and $x''(s^{\star})=x'(s^{\star})-\min\left\{ \frac{d}{2\sum_{a\in\mathcal{A}}m_{\star}(s^{\star},a)},\epsilon\cdot\frac{(1-\gamma)}{2\vert\mathcal{S}_{d}\vert}\right\} $. Notice that $x''(s^{\star})\geq0$ because $x'(s^{\star})\geq\epsilon\cdot\frac{(1-\gamma)}{2\vert\mathcal{S}_{d}\vert}$. Moreover, since $C-\sum_{s\in\mathcal{S}_{d}}x'(s)\geq0$, it holds that $C-\sum_{s\in\mathcal{S}_{d}}x''(s)\geq0$, implying that $x''\in\mathcal{X}$. Since $m_{\star}\in\mathcal{M}^{\star}(x^{0})$, it follows from~(\ref{eq: equality of x', mo and m}) that $\iprod{r_{2}^{x'}}{m_{\star}}=\iprod{r_{2}^{x'}}{m^{0}}$. Thus, for any $m\in\Mdet\setminus\mathcal{M}^{\star}(x^{0})$,
\begin{align*}\iprod{r_{2}^{x''}}{m_{\star}} & \geq\iprod{r_{2}^{x'}}{m_{\star}}-\frac{d}{2\sum_{a\in\mathcal{A}}m_{\star}(s^{\star},a)}\sum_{a\in\mathcal{A}}m_{\star}(s^{\star},a)\\
 & =\iprod{r_{2}^{x'}}{m_{\star}}-\frac{d}{2}\\
 & =\iprod{r_{2}^{x'}}{m^{0}}-\frac{d}{2}\\
 & >\iprod{r_{2}^{x'}}{m^{0}}-d\\
 & \geq\iprod{r_{2}^{x'}}{m}\\
 & \geq\iprod{r_{2}^{x''}}{m}. \numberthis \label{eq: inequality of x''}
\end{align*}
In the meantime, for any $m'\in\mathcal{M}^{\star}(x^{0})\setminus\{m_{\star}\}$,
\begin{equation}
\begin{alignedat}{1}\iprod{r_{2}^{x''}}{m_{\star}} & =\iprod{r_{2}^{x'}}{m_{\star}}-\min\left\{ \frac{d}{2\sum_{a\in\mathcal{A}}m_{\star}(s^{\star},a)},\epsilon\cdot\frac{(1-\gamma)}{2\vert\mathcal{S}_{d}\vert}\right\} \cdot\sum_{a\in\mathcal{A}}m_{\star}(s^{\star},a)\\
 & >\iprod{r_{2}^{x'}}{m_{\star}}-\min\left\{ \frac{d}{2\sum_{a\in\mathcal{A}}m_{\star}(s^{\star},a)},\epsilon\cdot\frac{(1-\gamma)}{2\vert\mathcal{S}_{d}\vert}\right\} \cdot\sum_{a\in\mathcal{A}}m'(s^{\star},a)\\
 & =\iprod{r_{2}^{x'}}{m'}-\min\left\{ \frac{d}{2\sum_{a\in\mathcal{A}}m_{\star}(s^{\star},a)},\epsilon\cdot\frac{(1-\gamma)}{2\vert\mathcal{S}_{d}\vert}\right\} \cdot\sum_{a\in\mathcal{A}}m'(s^{\star},a)\\
 & =\iprod{r_{2}^{x''}}{m'}.
\end{alignedat}
\label{eq: inequality of x'' 2}
\end{equation}
Hence we obtain $x''\in\mathcal{X}$ and $\iprod{r_{2}^{x''}}{m_{\star}}>\iprod{r_{2}^{x''}}{\mdet}$ for all $\mdet\in\Mdet\setminus\{m_{\star}\}$ from (\ref{eq: inequality of x''}) and (\ref{eq: inequality of x'' 2}). By Lemma~\ref{lem:nonempty interior 2}, we know that $x''$ is an interior-point allocation of $\cP_{m_{\star}}$. Since $m_{\star}$ is also an optimal occupancy measure of the reward design problem in (\ref{eq:allocation_problem_om}) by the definition of $\mathcal{M}^{\star}(x^{0})$, it follows that $x''$ is an optimal interior-point allocation.

(Necessity) Assume toward a contradiction that $\xopt$ is an optimal interior-point allocation in $\cP_{m}$ for some optimal occupancy measure $m$. Then there must exist an open set $B(\xopt)$ containing $\xopt$ with $B(\xopt)\subseteq\cP_{m}$. Since $\cX$ is convex, the set $B(x^{\star})\cap\mathcal{X}$ has a nonempty interior by Lemma~\ref{lem:intersection of convex and open}. On the other hand, from the given condition, any $x\in\mathcal{P}_{m}$ must satisfy $C-\sum_{i}x_{i}^{0}=0$ and is hence on the boundary of $\cX$. This implies that the interior of $\cP_{m}\cap\cX$ is empty. Since $B(\xopt)\subseteq\cP_{m}$, it follows that the interior of $B(x^{\star})\cap\mathcal{X}$ is also empty, leading to a contradiction.
\end{proof}

\subsection{Proof of Theorem~\ref{prop:best responses to existence}\label{subsec:Proof-of-Theorem_best_response_to_existence}}
\begin{proof}
Consider $m^{\star}\in\BR(x^{\star})$. Define
\[
\BRdet^{\star} \deq \Mdet\cap\BR(\xopt)=\left\{ m\in\Mdet\mid\langle r_{2}^{x^{\star}},m\rangle=\langle r_{2}^{x^{\star}},m^{\star}\rangle\right\}
\]
and $b \deq \langle r_{2}^{x^{\star}},m^{\star}\rangle-\max_{m\in\Mdet\setminus\BRdet^{\star}}\langle r_{2}^{x^{\star}},m\rangle$. Because any best response $m\in\BR(x^{\star})$ satisfies $\iprod{r_{1}}{m}=v_{1}^{\star}$, it holds that $\langle r_{1},m\rangle=\langle r_{1},m^{\star}\rangle$ for all $m\in\BRdet^{\star}$. 

Case 1: Suppose that 
\begin{equation}
\sum_{a\in\mathcal{A}}m(s,a)=\sum_{a\in\cA}m'(s,a)\label{eq:identical_m}
\end{equation}
for any $m,m'\in\BRdet^{\star}$ and any $s\in\mathcal{S}_{d}$. Given a function $v \colon \mathcal{S}_{d}\rightarrow\mathbb{R}$, define $\delta^{v}$ as in \eqref{eq:def_deltav}. In the following, we will sometimes abuse the notation and treat $v$ as a vector in $\reals^{|\Sd|}$. Consider any $x'\in\{x^{\star}+\epsilon v\mid\Vert v\Vert_{1}<1\}$. For any $m\in\BRdet^{\star}$, it holds that
\begin{equation}
\begin{alignedat}{1}\langle r_{2}^{x'},m^{\star}\rangle & =\langle r_{2}^{x^{\star}+\epsilon v},m^{\star}\rangle\\
 & =\langle r_{2}^{x^{\star}},m^{\star}\rangle+\epsilon\langle\delta^{v},m^{\star}\rangle\\
 & =\langle r_{2}^{x^{\star}},m\rangle+\epsilon\langle\delta^{v},m\rangle\\
 & =\langle r_{2}^{x'},m\rangle.
\end{alignedat}
\label{eq: equality of x'}
\end{equation}
The third equality is due to the condition in~(\ref{eq:identical_m}) and the fact that $m\in\BRdet^{\star}$. In addition, when $\epsilon\leq\frac{1-\gamma}{2\vert\mathcal{A}\vert}\cdot b$, it holds that
\begin{align*}
\langle r_{2}^{x'},m^{\star}\rangle & =\langle r_{2}^{x^{\star}},m^{\star}\rangle+\epsilon\langle\delta^{v},m^{\star}\rangle\\
 & >\langle r_{2}^{x^{\star}},m^{\star}\rangle-\epsilon\cdot|\mathcal{A}|\max_{(s,a):s\in\mathcal{S}_{d}}|m^{\star}(s,a)|\\
 & \geq\langle r_{2}^{x^{\star}},m^{\star}\rangle-\epsilon\cdot|\mathcal{A}|\cdot\frac{1}{1-\gamma}\\
 & \geq\langle r_{2}^{x^{\star}},m^{\star}\rangle-\frac{b}{2}\\
 & =\max_{m'\in\Mdet\setminus\BRdet^{\star}}\langle r_{2}^{x^{\star}},m'\rangle+\frac{b}{2}\\
 & \geq\max_{m'\in\Mdet\setminus\BRdet^{\star}}\left(\langle r_{2}^{x^{\star}},m'\rangle+\epsilon\cdot|\mathcal{A}|\cdot\frac{1}{1-\gamma}\right)\\
 & \geq\max_{m'\in\Mdet\setminus\BRdet^{\star}}\left(\langle r_{2}^{x^{\star}},m'\rangle+\epsilon\cdot|\mathcal{A}|\max_{(s,a):s\in\mathcal{S}_{d}}|m'(s,a)|\right)\\
 & \geq\max_{m'\in\Mdet\setminus\BRdet^{\star}}\left(\langle r_{2}^{x^{\star}},m'\rangle+\epsilon\langle\delta^{v},m'\rangle\right)\\
 & =\max_{m'\in\Mdet\setminus\BRdet^{\star}}\langle r_{2}^{x'},m'\rangle. \numberthis \label{eq: inequality of x'}
\end{align*}
It follows from (\ref{eq: equality of x'}) and (\ref{eq: inequality of x'}) that $\iprod{r_{2}^{x'}}{\mopt}\geq\iprod{r_{2}^{x'}}{m}$ for any $m\in\Mdet$ when $\epsilon\leq\frac{1-\gamma}{2\vert\mathcal{A}\vert}\cdot b$. Hence $\mopt\in\BR(x')$ or, equivalently, $x'\in\cP_{\mopt}$. Because $x'$ is chosen arbitrarily from $\{x^{\star}+\epsilon v\mid\Vert v\Vert_{1}<1\}$, it follows that $\{x^{\star}+\epsilon v\mid\Vert v\Vert_{1}<1\}\subset\mathcal{P}_{m^{\star}}$ when $\epsilon\leq\frac{1-\gamma}{2\vert\mathcal{A}\vert}\cdot b$ and $x^{\star}\in\mathcal{X}$, implying that $\xopt$ is an optimal interior-point allocation. 

Case 2: If (\ref{eq:identical_m}) does not hold, there must exist $s_{0}\in\mathcal{S}_{d}$ such that $\sum_{a\in\mathcal{A}}m(s_{0},a)\neq\sum_{a\in\mathcal{A}}m'(s_{0},a)$ for some $m,m'\in\BRdet^{\star}$, Because $\BRdet^{\star}$ is finite, there must exist $m_{2}\in\BRdet^{\star}$ such that $\sum_{a\in\mathcal{A}}m_{2}(s_{0},a)=\min_{m\in\BRdet^{\star}}\sum_{a\in\mathcal{A}}m(s_{0},a)$. Let $c>0$, and define $x'$ such that $x'(s_{0})=x^{\star}(s_{0})-c$ and $x'(s)=x^{\star}(s)$ for all $s\in\mathcal{S}_{d}\setminus\{s_{0}\}$. Then, for any $m\in\BRdet^{\star}$, it holds that
\begin{equation}
\begin{aligned}\langle r_{2}^{x'},m_{2}\rangle & =\langle r_{2}^{x^{\star}},m_{2}\rangle-c\sum_{a\in\mathcal{A}}m_{2}(s_{0},a)\\
 & =\langle r_{2}^{x^{\star}},m\rangle-c\sum_{a\in\mathcal{A}}m_{2}(s_{0},a)\\
 & \geq\langle r_{2}^{x^{\star}},m\rangle-c\sum_{a\in\mathcal{A}}m(s_{0},a)\\
 & =\langle r_{2}^{x'},m\rangle.
\end{aligned}
\label{eq:perturbed x and m in M_det}
\end{equation}
Meanwhile, choose $c<b(1-\gamma)$. This implies that $c\sum_{a\in\mathcal{A}}m_{2}(s_{0},a)\leq c/(1-\gamma)<b$. Thus,
\begin{equation}
\begin{aligned}\langle r_{2}^{x'},m_{2}\rangle & =\langle r_{2}^{x^{\star}},m_{2}\rangle-c\sum_{a\in\mathcal{A}}m_{2}(s_{0},a)\\
 & >\langle r_{2}^{x^{\star}},m_{2}\rangle-b\\
 & =\langle r_{2}^{x^{\star}},m^{\star}\rangle-b\\
 & =\max_{m'\in\Mdet\setminus\BRdet^{\star}}\langle r_{2}^{x^{\star}},m'\rangle\\
 & \geq\max_{m'\in\Mdet\setminus\BRdet^{\star}}\left(\langle r_{2}^{x^{\star}},m'\rangle-c\sum_{a\in\mathcal{A}}m'(s_{0},a)\right)\\
 & =\max_{m'\in\Mdet\setminus\BRdet^{\star}}\langle r_{2}^{x'},m'\rangle.
\end{aligned}
\label{eq:perturbed x for m in M=00005CM_det}
\end{equation}

It follows from (\ref{eq:perturbed x and m in M_det}) and (\ref{eq:perturbed x for m in M=00005CM_det}) that $\langle r_{2}^{x'},m_{2}\rangle\geq\max_{m\in\Mdet}\langle r_{2}^{x'},m\rangle$. Because any MDP has a deterministic optimal policy, it holds that $\max_{m\in\Mdet}\langle r_{2}^{x'},m\rangle=\max_{m\in\mathcal{M}}\langle r_{2}^{x'},m\rangle$. This implies $\langle r_{2}^{x'},m_{2}\rangle\geq\max_{m\in\mathcal{M}}\langle r_{2}^{x'},m\rangle$ or, equivalently, $m_{2}\in\BR(x')$. Since $m_{2}\in\BRdet^{\star}=\Mdet\cap\BR(\xopt)$, and any best response $m\in\BR(x^{\star})$ satisfies $\iprod{r_{1}}{m}=v_{1}^{\star}$, it follows that $\langle r_{1},m_{2}\rangle=v_{1}^{\star}$. This implies that $(x',m_{2})$ is a feasible solution of (\ref{eq:margin problem}). Since $C-\sum_{i=1}^{|\Sd|}x'_{i}=C-\sum_{i=1}^{|\Sd|}x_{i}^{\star}+c\geq c>0$, the optimal value of (\ref{eq:margin problem}) must be strictly positive. It then follows from Theorem~\ref{Thm:Sufficiency} that there exists an optimal interior-point allocation.
\end{proof}

\section{Proof of Proposition~\ref{thm:deterministic region contains non-1 }\label{sec:appendix-reward}}
\begin{lem}
\label{lem:optimal_action-2}Let $m^{\star}$ be an optimal occupancy measure of the reward design problem in~(\ref{eq:allocation_problem_om}), $\mr^{\star}\in\tilde{\mathcal{P}}_{m^{\star}}$, and $\pi^{\star}$ be a policy that induces $m^{\star}$, i.e., $\pistar\in\Pi(\mopt)$. For any $\pi\in\Pidet(\pi^{\star})$, the induced occupancy measure $m^{\pi}$ is a best response of $x^{\star}$, i.e. $m^{\pi}\in\BR(x^{\star})$ or, equivalently, $\mr^{\star}\in\tilde{\mathcal{P}}_{m^{\pi}}$. 
\end{lem}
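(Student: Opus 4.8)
The plan is to reprise the argument in the proof of Proposition~\ref{lem:optimal_action} almost verbatim, after noting that that proof nowhere used the specific structure of $r_{2}^{\xopt}$ as ``base reward plus admissible allocation''; it used only that $m^{\star}$ is a best response to the reward function in question. Here the relevant reward function is $\mr^{\star}$, and the hypothesis $\mr^{\star}\in\tildep_{m^{\star}}$ is exactly $m^{\star}\in\BR(\mr^{\star})$, so $\pi^{\star}\in\Pi(\mopt)$ is an optimal policy of the MDP $(\cS,\cA,\cT,\mr^{\star},\gamma,\rho)$. I also read the intended conclusion as $m^{\pi}\in\BR(\mr^{\star})$, equivalently $\mr^{\star}\in\tildep_{m^{\pi}}$ --- the ``$m^{\pi}\in\BR(\xopt)$'' appearing in the displayed statement being a copy-paste leftover from Proposition~\ref{lem:optimal_action}.

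Concretely, I would first dispose of the case $\pi^{\star}\in\Pidet$, where $\Pidet(\pi^{\star})=\{\pi^{\star}\}$ and there is nothing to prove. When $\pi^{\star}\notin\Pidet$, let $Q^{\mathrm{opt}}$ and $V^{\mathrm{opt}}$ denote the optimal $Q$-function and value function of the MDP with reward $\mr^{\star}$. Lemma~\ref{lem:optimal_action-1} applied to the optimal policy $\pi^{\star}$ gives $\E_{a\sim\pi^{\star}(s,\cdot)}Q^{\mathrm{opt}}(s,a)=\max_{a'\in\cA}Q^{\mathrm{opt}}(s,a')$ for all $s\in\cS$, hence $Q^{\mathrm{opt}}(s,a)=V^{\mathrm{opt}}(s)$ whenever $\pi^{\star}(s,a)\neq0$. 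For $\pi\in\Pidet(\pi^{\star})$ and any fixed $s$, the same two-case split used in the proof of Proposition~\ref{lem:optimal_action} --- according to whether $\pi^{\star}(s,\cdot)$ is a point mass --- yields $\E_{a\sim\pi(s,\cdot)}Q^{\mathrm{opt}}(s,a)=V^{\mathrm{opt}}(s)$: if $\pi^{\star}(s,\cdot)$ randomizes, the unique action $a^{\star}$ with $\pi(s,a^{\star})=1$ has $\pi^{\star}(s,a^{\star})\neq0$ so $Q^{\mathrm{opt}}(s,a^{\star})=V^{\mathrm{opt}}(s)$; otherwise $\pi(s,\cdot)=\pi^{\star}(s,\cdot)$ and the identity again follows.

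It then remains to invoke the performance difference lemma, exactly as in Proposition~\ref{lem:optimal_action}: $V^{\pi}(s_{0})-V^{\mathrm{opt}}(s_{0})=\tfrac{1}{1-\gamma}\,\E_{s\sim d_{s_{0}}^{\pi}}\bigl[\E_{a\sim\pi(s,\cdot)}Q^{\mathrm{opt}}(s,a)-V^{\mathrm{opt}}(s)\bigr]=0$ for every $s_{0}$, so $\E_{s_{0}\sim\rho}[V^{\pi}(s_{0})]$ attains the optimal value of the MDP with reward $\mr^{\star}$; rewriting via Lemma~\ref{Expected-accumulated-reward} this is $\iprod{\mr^{\star}}{m^{\pi}}=\max_{m'\in\cM}\iprod{\mr^{\star}}{m'}$, i.e., $m^{\pi}\in\BR(\mr^{\star})$ and $\mr^{\star}\in\tildep_{m^{\pi}}$. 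I do not expect a genuine obstacle here; the only place needing attention is the point-mass/non-point-mass case split at states where $\pi^{\star}$ randomizes, and this bookkeeping is already carried out in the proof of Proposition~\ref{lem:optimal_action}, the only change being to substitute an arbitrary reward vector $\mr^{\star}$ for $r_{2}^{\xopt}$ throughout.
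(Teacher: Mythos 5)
Your proposal is correct and follows exactly the route the paper intends: the paper's own proof of Lemma~\ref{lem:optimal_action-2} is simply the remark ``Similar to Proposition~\ref{lem:optimal_action}{}'', and your write-up is precisely that argument with $\mr^{\star}$ substituted for $r_{2}^{\xopt}$, together with the (correct) observation that the original proof never used the additive structure of the allocated reward. Your reading of the stated conclusion as $m^{\pi}\in\BR(\mr^{\star})$, i.e.\ $\mr^{\star}\in\tildep_{m^{\pi}}$, is also the right interpretation of the statement's leftover notation.
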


\begin{proof}
Similar to Proposition~\ref{lem:optimal_action}.
\end{proof}
\begin{lem}
\label{lem: optimal_action-3}Under the same conditions in Lemma~\ref{lem:optimal_action-2}, the induced occupancy measure $m^{\pi}$ also satisfies $\iprod{r_{1}}{m^{\pi}}=v_{1}^{\star}$ . 
\end{lem}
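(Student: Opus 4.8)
The plan is to reduce the claim to Proposition~\ref{prop:all determinisitc m are optimal} and then re-run that proof, observing that it never exploits the special form of $r_{1}$. First I would record that, since $m^{\star}$ is an optimal occupancy measure of~(\ref{eq:allocation_problem_om}), there is an admissible allocation $x^{\star}\in\cX$ for which $(x^{\star},m^{\star})$ is an optimal solution of~(\ref{eq:allocation_problem_om}); in particular $\iprod{r_{1}}{m^{\star}}=v_{1}^{\star}$ and $m^{\star}\in\BR(x^{\star})$. Because $\pi^{\star}$ induces $m^{\star}$, it is an optimal policy of the follower's MDP with reward $r_{2}^{x^{\star}}$, so $(x^{\star},m^{\star})$ together with $\pi^{\star}$ satisfies exactly the hypotheses of Proposition~\ref{prop:all determinisitc m are optimal}. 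The extra hypothesis $\mr^{\star}\in\tilde{\cP}_{m^{\star}}$ carried over from Lemma~\ref{lem:optimal_action-2} is immaterial for the conclusion $\iprod{r_{1}}{m^{\pi}}=v_{1}^{\star}$ and only recorded for bundling.

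I would then carry out the following steps, mirroring the proof of Proposition~\ref{prop:all determinisitc m are optimal}. \emph{Step 1:} if $\pi^{\star}\in\Pidet$ then $\Pidet(\pi^{\star})=\{\pi^{\star}\}$ and the claim is trivial, so assume $\pi^{\star}\notin\Pidet$ and fix a state $s'$ with $\cA_{s'}\deq\{a\in\cA\mid\pi^{\star}(s',a)\in(0,1)\}$ nonempty, and some $a'\in\cA_{s'}$. \emph{Step 2:} define $\pi'$ by $\pi'(s,\cdot)=\pi^{\star}(s,\cdot)$ for $s\neq s'$ and $\pi'(s',a')=1$; by Lemma~\ref{lem:optimal_action-1} applied to the follower's MDP with reward $r_{2}^{x^{\star}}$, every action in $\cA_{s'}$ attains the optimal $Q$-value at $s'$, so $\pi'$ remains follower-optimal and is therefore a feasible policy for~(\ref{eq:original problem}). \emph{Step 3:} by optimality of $\pi^{\star}$ for the leader, $V_{1}^{\pi'}(s_{0})\le V_{1}^{\pi^{\star}}(s_{0})$; applying the performance difference lemma to $V_{1}^{\pi'}-V_{1}^{\pi^{\star}}$ and using that $\pi'$ and $\pi^{\star}$ agree off $s'$ forces $Q_{1}^{\pi^{\star}}(s',a')\le V_{1}^{\pi^{\star}}(s')$, and the Bellman consistency equation $\E_{a\sim\pi^{\star}(s',\cdot)}Q_{1}^{\pi^{\star}}(s',a)=V_{1}^{\pi^{\star}}(s')$ then forces equality; since $s'$ and $a'\in\cA_{s'}$ were arbitrary, and equality is automatic when $\pi^{\star}(s,a)=1$, this gives $Q_{1}^{\pi^{\star}}(s,a)=V_{1}^{\pi^{\star}}(s)$ for every $(s,a)$ with $\pi^{\star}(s,a)\neq0$. \emph{Step 4:} for any $\pi\in\Pidet(\pi^{\star})$, whenever $\pi(s,a)=1$ we have $\pi^{\star}(s,a)\neq0$ and hence $Q_{1}^{\pi^{\star}}(s,a)=V_{1}^{\pi^{\star}}(s)$; a last application of the performance difference lemma gives $V_{1}^{\pi}(s_{0})=V_{1}^{\pi^{\star}}(s_{0})$ for all $s_{0}$, whence $\iprod{r_{1}}{m^{\pi}}=\E_{s_{0}\sim\rho}[V_{1}^{\pi^{\star}}(s_{0})]=\iprod{r_{1}}{m^{\star}}=v_{1}^{\star}$.

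The argument is largely mechanical once the reduction is set up, so the main point to get right — the closest thing to an obstacle — is Step 2: the leader-payoff comparison must pass through a policy that is genuinely feasible for the reward design problem, which is why I would phrase the follower side in terms of the admissible allocation $x^{\star}$ underlying $m^{\star}$ rather than the (possibly non-admissible) reward function $\mr^{\star}$. Once $\pi'$ is known to be follower-optimal under $r_{2}^{x^{\star}}$, the remainder is a verbatim transcription of the proof of Proposition~\ref{prop:all determinisitc m are optimal}, with $r_{1}$ kept general throughout.
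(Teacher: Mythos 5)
Your proposal is correct and matches the paper's intent: the paper's proof of this lemma is literally ``similar to Proposition~\ref{prop:all determinisitc m are optimal},'' and you re-run that argument after the (valid) reduction of picking an admissible $x^{\star}$ witnessing the optimality of $m^{\star}$ and noting that the proof never uses the special form of $r_{1}$. Your added care in Step~2 --- routing the follower-optimality and leader-feasibility arguments through $r_{2}^{x^{\star}}$ rather than the possibly non-admissible $\mr^{\star}$ --- is exactly the point that makes the transcription legitimate.
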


\begin{proof}
Similar to Proposition~\ref{prop:all determinisitc m are optimal}.
\end{proof}
\begin{proof}[Proof of Proposition~\ref{thm:deterministic region contains non-1 }]
The proof is based on Lemma~\ref{lem:optimal_action-2} and Lemma~\ref{lem: optimal_action-3} and is similar to the proof of Theorem~\ref{thm:deterministic region contains non}.
\end{proof}

\end{document}